\definecolor{darkgreen}{rgb}{0,0.45,0} 
\theoremstyle{plain}
\newtheorem{theorem}{Theorem}[section]
\newtheorem{lemma}[theorem]{Lemma}
\newtheorem{proposition}[theorem]{Proposition}
\newtheorem{corollary}[theorem]{Corollary}
\theoremstyle{remark}
\newtheorem{remark}[theorem]{Remark}
\theoremstyle{definition}
\newtheorem{example}[theorem]{Example}
\newtheorem{examples}[theorem]{Examples}
\newtheorem{definition}[theorem]{Definition}
\numberwithin{equation}{section}
\DeclareMathOperator{\id}{id}
\DeclareMathOperator{\Ker}{Ker}
\DeclareMathOperator{\im}{im}
\DeclareMathOperator{\End}{End}
\DeclareMathOperator{\supp}{supp}
\DeclareMathOperator{\cosupp}{cosupp}
\DeclareMathOperator{\Hom}{Hom}
\def\Vect{{\bf Vect}}
\def\Act{{\bf Act}}
\def\CoAct{{\bf CoAct}}
\def\ot{\otimes}
\def\ol{\overline}
\begin{document}

\title[$V$-universal Hopf algebras (co)acting on $\Omega$-algebras]{$V$-universal Hopf algebras (co)acting on $\Omega$-algebras}

\author{A.\,L. Agore}
\address{Vrije Universiteit Brussel, Pleinlaan 2, B-1050 Brussels, Belgium}
\address{Simion Stoilow Institute of Mathematics of the Romanian Academy, P.O. Box 1-764, 014700 Bucharest, Romania}
\email{ana.agore@vub.be}

\author{A.\,S. Gordienko}
\address{Department of Higher Algebra,
Faculty of Mechanics and  Mathematics,
M.\,V.~Lomonosov Moscow State University,
Leninskiye Gory, d.1,  Main Building, GSP-1, 119991 Moskva, Russia }
\address{Moscow Center for Fundamental and Applied Mathematics}
\address{Moscow State Technical University of Civil Aviation, Kronshtadtsky boulevard, d.\,20, 125993 Moskva, Russia}

\email{alexey.gordienko@math.msu.ru}

\author{J. Vercruysse}
\address{D\'epartement de Math\'ematiques, Facult\'e des sciences, Universit\'e Libre de Bruxelles, Boulevard du Triomphe, B-1050 Bruxelles, Belgium}
\email{jvercruy@ulb.ac.be}

\keywords{(Co)algebra, Hopf algebra, bialgebra, (co)action, (co)measuring (co)module (co)algebra, universal (co)acting Hopf algebra}

\begin{abstract} 
We develop a theory which unifies the universal (co)acting bi/Hopf algebras as studied by 
Sweedler, Manin and Tambara with the recently introduced \cite{AGV1} bi/Hopf-algebras that are universal among all support equivalent (co)acting bi/Hopf algebras. Our approach uses vector spaces endowed with a family of linear maps between tensor powers of $A$, called $\Omega$-algebras. This allows us to treat algebras, coalgebras, braided vector spaces and many other structures in a unified way. We study $V$-universal measuring coalgebras and $V$-universal comeasuring algebras between $\Omega$-algebras $A$ and $B$, relative to a fixed subspace $V$ of $\Vect(A,B)$. By considering the case $A=B$, we derive the notion of a $V$-universal (co)acting bialgebra (and Hopf algebra) for a given algebra $A$. In particular, this leads to a refinement of the existence conditions for the Manin--Tambara universal coacting bi/Hopf algebras. We establish an isomorphism between the $V$-universal acting bi/Hopf algebra
and the finite dual of the $V$-universal coacting bi/Hopf algebra under certain conditions on $V$ in terms of the finite topology on $\End_F(A)$.
\end{abstract}

\subjclass[2010]{Primary 16T05; Secondary 16W50, 16T05, 16T25, 16W22.}

\thanks{The first named author is a fellow of FWO (Fonds voor Wetenschappelijk Onderzoek -- Flanders) and was partially supported by Romanian Ministery of Research and Innovation, CNCS - UEFISCDI, project numbers PN-III-P1-1.1-TE-2016-0124 and PN-III-P4-ID-PCE-2020-0458. The second author is partially supported by a grant of the scientific council of Moscow State Technical University of Civil Aviation and by a grant of the Moscow Center for Fundamental and Applied Mathematics, MSU (Russia). The third author thanks the FNRS for support via the MIS project ``Antipode'' (Grant F.4502.18)}

\maketitle

\tableofcontents

\section{Introduction}

In many areas of mathematics and physics, algebras with an additional structure find their applications, e.g. algebras with an action of a group by automorphisms, algebras with an action of a Lie algebra by derivations
or group graded algebras. For such algebras it is often useful to consider the more general notion of a (co)module algebra over a Hopf algebra which enables to develop theories for different types of additional structures simultaneously. In addition, (co)module algebras can be interpreted as algebras of regular functions on noncommutative spaces with fixed quantum symmetries. As it is often the case in Hopf algebra theory, dual notions play an equally important role and therefore also (co)module coalgebras are studied \cite{Radford}, for example when considering generalized Hopf modules \cite{CMZ}. 

When one studies graded algebras, for many applications, e.g. for the structure theory and graded polynomial identities, it does not matter which specific group is grading the algebra, only the decomposition of the algebra into the direct sum of graded components is important. This idea led naturally to the notion of equivalence of gradings. (See e.g.~\cite{ElduqueKochetov}.) As the authors showed in~\cite{AGV1},
this notion can be generalized further to the case of (co)module structures on algebras.
Equivalence of (co)module structures can be used, for instance, in the study of polynomial $H$-identities.
Furthermore, as the classification of (co)module algebras seems to be a wild problem, a reasonable approach is 
to try to obtain such classification for a given algebra up to equivalence of (co)module structures. Following this avenue of investigation could drastically simplify the process of classification.

 It turns out that among all Hopf algebras (co)acting on a given algebra in an equivalent way, there exists a universal one~\cite[Theorem~3.6, Theorem~4.5]{AGV1}. These universal Hopf algebras are analogues of the universal group of a grading~\cite[Definition~1.17]{ElduqueKochetov}. 
 
 Several other universal (co)acting Hopf algebras/bialgebras are present in the literature. To start with, in the 1960s, M.\,E.~Sweedler introduced the universal measuring coalgebra which leads to the notion of the universal measuring bialgebra~\cite[Chapter VII]{Sweedler}. Unlike the universal Hopf algebra/bialgebra of a specific action defined in~\cite{AGV1}, Sweedler's universal measuring bialgebra is universal among all actions, not necessarily equivalent to a given one. The universal measuring coalgebra was recently shown to exist in the broader context of braided monoidal categories (see \cite{HLV}). Moreover, Sweedler's construction leads naturally to the notion of a universal acting Hopf algebra of an algebra (see Example~\ref{exSw}).

Dual notions of the universal measuring bialgebra were introduced in 1988--1990 by Yu.\,I.~Manin \cite{Manin} and D.~Tambara \cite{Tambara} under the name of universal coacting bi/Hopf algebra of an algebra. For the importance of these objects in non-commutative algebraic geometry we refer the reader to \cite{RVdB2}. It is worth to point out that the existence of the universal coacting (bi)algebra and Hopf algebra has been proved only for finite dimensional algebras \cite[Theorem 1.1]{Tambara}, or for graded algebras that are finite dimensional in each of the homogeneous components \cite{Manin} (remark that in both cases, this means that the algebra is a rigid object in the symmetric monoidal category wherein both the algebra and the universal coacting bi/Hopf algebra live).
We provide examples of (infinite dimensional) algebras and coalgebras for which the universal coacting bi/Hopf algebra does not exist at all in Section~\ref{non-exist} below. 

The present paper was prompted mainly by the desire to build a unified theory, which includes
as particular cases universal Hopf algebras of a given (co)action from~\cite{AGV1} as well
as the Sweedler--Manin--Tambara universal (co)acting bi/Hopf algebras. 
This will be achieved by introducing the notion of a $V$-universal (co)acting bi/Hopf algebra of an algebra $A$ over a field $F$ for a given subalgebra $V$ of the algebra $\End_F(A)$ of linear operators on $A$.
In order to treat in our theory the case of (co)actions both on algebras and on coalgebras (and even on braided vector spaces or other structures) we let $A$ be just a vector space with a fixed set $\Omega$ of linear maps between tensor powers of $A$. We call a space $A$ with such a structure an \textit{$\Omega$-algebra}, see Section~\ref{SectionOmegaAlgebras}. 

Another goal of this paper is to provide a unifying treatment for the different conditions which are classically imposed on an algebra $A$ in order for the Manin--Tambara universal coacting bi/Hopf algebra to exist. This leads to one of the main results of this paper: Theorem~\ref{TheoremBHsquareVExistence}, where necessary and sufficient conditions for the existence of a $V$-universal coacting Hopf algebra are given. In particular, this allows us to refine the conditions of existence of the Manin--Tambara universal coacting bi/Hopf algebras (Corollary~\ref{CorollaryManinExistence}).

Another aspect that we consider in this paper is duality.
In~\cite[Remark 1.3]{Tambara}, D.~Tambara showed that the universal measuring coalgebra from a finite dimensional algebra to an algebra is isomorphic to the finite dual of the universal comeasuring algebra between the same algebras.
In Theorem~\ref{TheoremUnivHopf(Co)actDuality} we provide a refinement of this duality, showing that under certain conditions on $V$ in terms of the finite topology on $\End_F(A)$, the $V$-universal acting bi/Hopf algebra
is isomorphic to the finite dual of the $V$-universal coacting bi/Hopf algebra. An important consequence of this result is that it provides a way of describing the  universal Hopf algebra of an action in the finite dimensional case. Indeed, while the construction of the universal Hopf algebra of a coaction is more or less transparent as it is defined by generators and relations~\cite[Theorem 3.6]{AGV1}, the proof of existence of the universal Hopf algebra of an action is non-constructive. Now with Theorem~\ref{TheoremUnivHopf(Co)actDuality} at hand we can proceed as follows: if $A$ is a finite dimensional $H$-module algebra over a finite dimensional Hopf algebra $H$ via $\psi \colon H\otimes A \to A$, then the universal Hopf algebra of $\psi$ is $H_0^{\circ}$ where we denote by $H_0$ the universal Hopf algebra of the $H^*$-coaction $\rho
\colon A \to A \otimes H^*$ corresponding to $\psi$.

The remainder of the paper is organized as follows.

In Section~\ref{SectionLinearMaps} we introduce some preliminary material on (support) equivalence of linear maps as well as certain topological notions needed throughout.
Section~\ref{SectionOmegaAlgebras} sets the stage for the sequel by introducing the notion of a \textit{$\Omega$-algebra}. In order to make our constructions transparent categorically, in Sections \ref{SectionMeasurings} and \ref{SectionComeasurings} we define (co)measurings in the more general setting of $\Omega$-algebras. In Theorem~\ref{TheoremUnivMeasExistence} we construct a $V$-universal measuring coalgebra, denoted by ${}_\square \mathcal{C}(A,B,V)$, for any $\Omega$-algebras $A$ and $B$ and any subspace $V\subseteq \mathbf{Vect}_F$. The existence of a $V$-universal comeasuring algebra $\mathcal{A}^\square(A,B,V)$ is shown in Theorem~\ref{TheoremUnivComeasExistence} under the assumption that $V$ is closed in the finite topology on $\mathbf{Vect}_F$ and pointwise finite dimensional. The main result of Section~\ref{SectionDuality(Co)measurings} provides a coalgebra isomorphism between ${}_\square \mathcal{C}(A,B,V)$ and the finite dual $\mathcal{A}^\square(A,B,V)^{\circ}$, an analog of the classical duality between Sweedler's measuring coalgebra and Manin--Tambara's universal coacting algebra \cite{Tambara}. 

In Sections~\ref{SectionActions} and~\ref{SectionCoactions} we consider (co)actions in the setting of $\Omega$-algebras by introducing the notions of module $\Omega$-algebras and comodule $\Omega$-algebras over a bialgebra. These generalize the classical (co)module (co)algebras. While $V$-universal acting bi/Hopf algebras (Theorem~\ref{TheoremsquareBBialgebra}) exist for every unital subalgebra
 of $\End_F(A)$, this is not true for $V$-universal coacting bi/Hopf algebras. 
In Section~\ref{SectionCoactions} we show that the latter exist
if $V$ is again closed in the finite topology and pointwise finite dimensional. We denote by ${}_\square \mathcal{B}(A,V)$ and $\mathcal{B}{}^\square (A,V)$ the $V$-universal acting bialgebra, respectively the $V$-universal coacting bialgebra.

The next two sections revolve around the notion of duality. More precisely, Section~\ref{SectionDualityActionsCoactions} studies the duality between actions and coactions. The main result is Theorem~\ref{TheoremUnivHopf(Co)actDuality} where we prove that for any $\Omega$-algebra $A$ and any $V\subseteq \End_F(A)$ a unital pointwise finite dimensional subalgebra closed in the finite topology, the $V$-universal acting bi/Hopf algebra is isomorphic to the finite dual of the $V$-universal coacting bi/Hopf algebra. Finally, in Section~\ref{SectionDualityAlgCoalg} we turn to the classical duality between algebras and coalgebras and study the way $V$-universal (co)acting bialgebras behave with respect to it
in a more general situation of $\Omega$- and $\Omega^*$-algebras. More precisely, we show that for a finite dimensional $\Omega$-algebra $A$ and a unital subalgebra $V \subseteq \End_F(A^*)$ we have a bialgebra isomorphism between ${}_\square \mathcal{B}(A^*,V)$ and ${}_\square \mathcal{B}(A,V^{\#})^{\mathrm{op}}$, where $V^{\#} := \lbrace f^* \mid f \in V \rbrace$. Similarly, we have a bialgebra isomorphism between $\mathcal{B}^\square(A^*,V)$ and $\mathcal{B}^\square(A,V^{\#})^{\mathrm{cop}}$. The two isomorphisms can be extended to the level of $V$-universal (co)acting Hopf algebras if we assume some extra conditions on ${}_\square \mathcal{B}(A,V^{\#})$ and $\mathcal{B}^\square(A,V^{\#})$. This is done in Theorems~\ref{dual3} and~\ref{dual4}.

Finally, in Section~\ref{non-exist} we present explicit examples of (co)algebras for which the universal coacting Hopf algebra does not exist. 

We end this introductory part with a few words about notation. 
 Throughout the paper we fix the base field $F$. A vector space will always mean a vector space over $F$, the same for linear maps, tensor products, (co)algebras, bialgebras etc.
All coalgebras considered here are counital and coassociative. $A^{\circ}$ stands for the finite dual of an algebra $A$. The multiplication and the unit maps of an algebra $A$ are denoted by $\mu_{A}$ and $u_{A}$. Similarly, $\Delta_{C}$ and $\varepsilon_{C}$ are used to designate the comultiplication map and the counit map of a coalgebra $C$. When there is no fear of confusion we drop the subscripts. For a bialgebra $B$ we denote by $B^\mathrm{op}$ (resp. $B^\mathrm{cop}$) the opposite (resp. co-opposite) bialgebra. 
We use the following notation for the usual categories: $\mathbf{Alg}_F$ (unital associative algebras over $F$), $\mathbf{Coalg}_F$ (coalgebras over $F$), $\mathbf{Bialg}_F$ (bialgebras over $F$), $\mathbf{Hopf}_F$ (Hopf algebras over $F$) and $\mathbf{SHopf}_F$ (Hopf algebras with bijective antipode over $F$).
  
  If $A$ and $B$ are objects of a category $\mathcal A$, we denote by $\mathcal A(A,B)$ the set
  of all morphisms $A \to B$ in $\mathcal A$. For example, $\mathbf{Vect}_F(V,W)$ is the set of all
  linear maps $V \to W$ for vector spaces $V$ and $W$.
  
  We usually omit the notation for forgetful and embedding functors, e.g. $\mathbf{Alg}_F \to \mathbf{Vect}_F$. In addition, we freely use the vector space structure on $\mathbf{Vect}_F(V,W)$ without special notice. If $S$ is a subset of a vector space over a field $F$, we denote the linear span of $S$ by $\langle S \rangle_F$.

\section{Linear maps, their (co)supports, and finite topology}\label{SectionLinearMaps}

\subsection{Support and cosupport of linear maps}
\label{SubsectionLinearMapsMod}

We start by introducing some notation and terminology that will be of use in our study of (co)measurings in the next sections.\\

Let $P$ be any vector space. We denote by $\Act_P$ the category whose objects are triples $(A,B,\psi)$ where $A$ and $B$ are vector spaces and $\psi\colon P\ot A\to B$ is a linear map. A morphism $(A,B,\psi)\to(A',B',\psi')$ in $\Act_P$ is a pair of linear maps $f\colon A\to A'$, $g\colon B\to B'$ such that $\psi'\circ (\id_P \ot f)=g\circ \psi$. Of course if $P$ has no additional structure, objects in $\Act_P$ are not ``actions'' in the usual sense.

Let $(A,B,\psi)$ be an object in $\Act_P$. Then let us denote by $\ol\psi \colon P\to \Vect_F(A,B)$ the associated morphism under the Hom-tensor adjunction, in other words, $\ol\psi(p)(a)=\psi(p\ot a)$ for all $p\in P$ and $a\in A$.
\begin{definition}
We call the subspace $$\cosupp \psi := \psi(P \otimes (-)) =\ol\psi(P) \subseteq \mathbf{Vect}_F(A,B)$$
 the \textit{cosupport} of $\psi$. 
 \end{definition}

\begin{definition}
Suppose $\psi_i \colon P_i \otimes A \to B$, $i=1,2$, are two linear maps for some vector spaces $P_1$
and $P_2$.
We say that $\psi_1$ is \textit{coarser} than $\psi_2$ and 
$\psi_2$ is \textit{finer} than $\psi_1$ and write $\psi_1 \preccurlyeq \psi_2$
if $\cosupp \psi_1 \subseteq \cosupp \psi_2$. 
We say that $\psi_1$ and $\psi_2$ are \textit{(support) equivalent} if $\psi_1 \preccurlyeq \psi_2$
and $\psi_1 \succcurlyeq \psi_2$, i.e.\ when they have the same cosupport.
\end{definition}

\label{SubsectionLinearMapsComod}

Let $Q$ be any vector space. We denote by $\CoAct_Q$ the category whose objects are triples $(A,B,\rho)$ where $A$ and $B$ are vector spaces and $\rho\colon A\to B\ot Q$ is a linear map. A morphism $(A,B,\rho)\to(A',B',\rho')$ in $\CoAct_Q$ is a pair of linear maps $f\colon A\to A'$, $g\colon B\to B'$ such that $\rho'\circ f=(g\ot \id_Q)\circ \rho$. 
Analogously to comodule structures, for such maps we use the Sweedler notation $\rho(a)=a_{(0)}\otimes a_{(1)}$ for $a\in A$, where the summation symbol is omitted.

Let $(a_\alpha)_\alpha$ be a basis in $A$ and let $(b_\beta)_\beta$ be a basis in $B$.
Define $q_{\beta\alpha} \in Q$ by $\rho(a_\alpha)=\sum_{\beta} b_\beta \otimes q_{\beta\alpha}$.
We call the linear span of  $q_{\beta\alpha}$ the \textit{support} of $\rho$
and denote it by $\supp \rho$.
Applying linear functions dual to $b_\beta$, it is easy to see that $\supp \rho$ does not depend on the choice of the bases in $A$ and $B$. In fact, $\supp \rho$ coincides with the smallest subspace $Q_1$ of $Q$ such that $\rho(A)\subseteq B \otimes Q_1$, i.e.\ such that $\rho$ factors as in the following diagram
\[
\xymatrix{
A \ar[rr]^-\rho \ar[dr] && B\ot Q\\
&B\ot\supp\rho \ar[ur]
}
\]

For any map $\rho\colon A \to B \otimes Q$ we denote by $\rho^{\vee} \colon Q^* \otimes A \to B$ the linear map defined
by $\rho^{\vee}(q^*\otimes a):= q^*(a_{(1)})a_{(0)}$. Obviously, this construction defines a functor $\CoAct_Q\to \Act_{Q^*}$ that sends $(A,B,\rho)$ to $(A,B,\rho^{\vee})$ and any morphism to itself.
\begin{definition}
We call the subspace $$\cosupp \rho := \cosupp \rho^{\vee} = \rho^{\vee}(Q^* \otimes (-)) =\ol{\rho^{\vee}}(Q^*) \subseteq \mathbf{Vect}_F(A,B)$$
the \textit{cosupport} of $\rho$.
\end{definition}

\begin{definition}
Suppose $\rho_i \colon A \to B \otimes Q_i$, $i=1,2$, are linear maps for some vector spaces $Q_i$.
We say that $\rho_1$ is \textit{coarser} than $\rho_2$ and 
$\rho_2$ is \textit{finer} than $\rho_1$ and write $\rho_1 \preccurlyeq \rho_2$
if $\cosupp \rho_1 \subseteq \cosupp \rho_2$. 
We say that $\rho_1$ and $\rho_2$ are \textit{(support) equivalent} if $\rho_1 \preccurlyeq \rho_2$
and $\rho_1 \succcurlyeq \rho_2$.
\end{definition}

Let us observe that the notions of support and cosupport as introduced above are indeed dual notions, as the next lemma makes precise.

\begin{lemma}\label{lemmasup}
Let $\rho\colon A \to B \otimes Q$ be a linear map and $\ol{\rho^{\vee}}\colon Q^*\to\Vect_F(A,B)$ as defined above. Then there is a (unique) linear isomorphism 
$$\theta\colon (\supp \rho)^* \to \cosupp\rho$$
making the following diagram, which expresses the epi-mono factorization of $\ol\rho$, commutative:
\[\xymatrix{
Q^* \ar@{->>}[rr]^\pi \ar[d]_{\ol{\rho^{\vee}}} &&  (\supp \rho)^* \ar[d]^{\theta}_\cong\\
\Vect_F(A,B) && \cosupp \rho \ar@{_(->}[ll]
}\]
where $\pi$ is the dual of the inclusion map $\supp\rho\subseteq Q$
\end{lemma}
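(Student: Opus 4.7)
The plan is to exhibit $\theta$ as the unique factorization arising from the epi-mono decomposition of $\overline{\rho^{\vee}}$, once we identify $\ker \pi$ with $\ker \overline{\rho^{\vee}}$.

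First I would unwind the definitions. By construction $\overline{\rho^{\vee}}(q^*)(a)=q^*(a_{(1)})\,a_{(0)}$, and by the very definition of $\cosupp\rho$, the image of $\overline{\rho^{\vee}}$ equals $\cosupp\rho$; hence the inclusion $\cosupp\rho\hookrightarrow\Vect_F(A,B)$ in the diagram really is the image part of the epi-mono factorization of $\overline{\rho^{\vee}}$. The whole lemma therefore reduces to showing that $\pi$ and $\overline{\rho^{\vee}}$ have the same kernel, for then the first isomorphism theorem supplies a unique linear isomorphism $\theta\colon Q^*/\ker\pi \cong (\supp\rho)^*\to\cosupp\rho$ making the square commute, and uniqueness of $\theta$ is automatic from surjectivity of $\pi$.

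The key computation is thus to verify that
\[
\ker\overline{\rho^{\vee}} \;=\; (\supp\rho)^{\perp} \;=\; \ker\pi,
\]
where the second equality is the standard fact that $\pi$, being the restriction-of-functionals map dual to the inclusion $\supp\rho\subseteq Q$, is surjective with kernel $(\supp\rho)^{\perp}$. For the first equality I would fix bases $(a_\alpha)_\alpha$ of $A$ and $(b_\beta)_\beta$ of $B$ and write $\rho(a_\alpha)=\sum_\beta b_\beta\otimes q_{\beta\alpha}$, so that $\overline{\rho^{\vee}}(q^*)(a_\alpha)=\sum_\beta q^*(q_{\beta\alpha})\,b_\beta$. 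Since the $b_\beta$ are linearly independent, $\overline{\rho^{\vee}}(q^*)=0$ is equivalent to $q^*(q_{\beta\alpha})=0$ for all $\alpha,\beta$, i.e.\ to $q^*$ vanishing on the span of the $q_{\beta\alpha}$, which is exactly $\supp\rho$. Conversely, if $q^*$ vanishes on $\supp\rho$ then since $\rho(a)\in B\otimes\supp\rho$ for every $a\in A$, the formula for $\overline{\rho^{\vee}}(q^*)(a)$ gives $0$.

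Combining these two observations, $\overline{\rho^{\vee}}$ factors through $\pi$ as an injection $(\supp\rho)^* \hookrightarrow \Vect_F(A,B)$ whose image is $\cosupp\rho$; this is the desired $\theta$, and it is an isomorphism onto $\cosupp\rho$ by construction. The only genuinely non-trivial point is the kernel identification above, which I expect to be the main (and only real) obstacle; everything else is a formal application of the first isomorphism theorem together with the basis-independence of $\supp\rho$ noted in the text.
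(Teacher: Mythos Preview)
Your proposal is correct and essentially matches the paper's argument: both reduce the claim to the identification $\ker\overline{\rho^{\vee}}=(\supp\rho)^{\perp}$, verified via the same basis computation $\overline{\rho^{\vee}}(q^*)=0\iff q^*(q_{\beta\alpha})=0$ for all $\alpha,\beta$. The only cosmetic difference is that the paper splits $Q=\supp\rho\oplus Q'$ and checks surjectivity and injectivity of $\theta$ separately, whereas you invoke the first isomorphism theorem directly; these are equivalent packagings of the same content.
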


\begin{proof}
Denote by $Q'$ any linear complement of $\supp\rho$ in $Q$, then $Q\cong \supp\rho\oplus Q'$ and hence also $Q^*\cong (\supp\rho)^*\oplus {Q'}^*$. Using this direct sum decomposition, we immediately obtain that $\ol{\rho^{\vee}}({Q'}^*)=0$ and therefore $\cosupp\rho = \ol{\rho^{\vee}}(Q^*)=\ol{\rho^{\vee}}((\supp\rho)^*)$. Hence the corestriction of $\ol{\rho^{\vee}}$ to $(\supp\rho)^*$ is well defined and surjective. Explicitly, this map is given by $\theta\colon (\supp\rho)^*\to \cosupp\rho,\ \theta(f)(a)=a_{(0)}f(a_{(1)})$ for all $f\in (\supp\rho)^*$ and $a\in A$.

It remains to check that $\theta$ is injective. To this end, consider as above a basis $(a_\alpha)_\alpha$  in $A$, a basis $(b_\beta)_\beta$ in $B$ and write $\rho(a_\alpha)=\sum_\beta b_\beta\ot q_{\beta\alpha}$, such that the elements $q_{\alpha\beta}$ generate $\supp\rho$. Suppose that $\theta(f)=0$ for some $f\in(\supp\rho)^*$. Then we find for all $\alpha$ that 
$\sum_\beta b_\beta f(q_{\beta\alpha})=0$ Since the elements $b_\beta$ form a base this implies that $f(q_{\beta\alpha})=0$ for all choices of $\alpha$ and $\beta$, hence $f=0$.
\end{proof}

As a consequence of the previous lemma, support equivalence can be expressed equivalently by means of the support and of the cosupport. More precisely, we have the following.

\begin{proposition}\label{PropositionPrecLinearMapComodCriterion}
Consider linear maps $\rho_i \colon A \to B \otimes Q_i$, $i=1,2$. 
Then $\rho_1 \preccurlyeq \rho_2$ if and only if there exists a linear map $\tau\colon
\supp \rho_2 \to \supp \rho_1$ making the diagram below commutative:
\begin{equation}\label{EqLinearMapComodABTauQ} \xymatrix{ A \ar[r] 
\ar[rd]
 & B \otimes \supp \rho_2 \ar@{-->}[d]^{\id_B \otimes \tau} \\
& B \otimes \supp \rho_1} \end{equation}
The map $\tau$, if it exists, is unique and surjective.
Consequently, the linear maps $\rho_1$ and $\rho_2$ are equivalent if and only if $\tau$ is a linear isomorphism.
\end{proposition}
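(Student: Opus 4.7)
My plan is to prove the two implications separately, construct $\tau$ explicitly via a chosen basis expansion, and then deduce uniqueness, surjectivity, and the equivalence statement from Lemma~\ref{lemmasup}.

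For the direction ($\Rightarrow$), I fix bases $(a_\alpha)_\alpha$ of $A$ and $(b_\beta)_\beta$ of $B$, expand $\rho_i(a_\alpha) = \sum_\beta b_\beta \otimes q^{(i)}_{\beta\alpha}$ so that the $q^{(i)}_{\beta\alpha}$ span $\supp\rho_i$, and define $\tau\colon \supp\rho_2 \to \supp\rho_1$ on these generators by $\tau(q^{(2)}_{\beta\alpha}) := q^{(1)}_{\beta\alpha}$, extended linearly. The only delicate point is well-definedness: if $\sum c_{\beta\alpha}\, q^{(2)}_{\beta\alpha} = 0$, I need $\sum c_{\beta\alpha}\, q^{(1)}_{\beta\alpha} = 0$. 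Arguing by contradiction, pick $f_1 \in (\supp\rho_1)^*$ not vanishing on this combination. By Lemma~\ref{lemmasup}, $\theta_1(f_1) \in \cosupp\rho_1 \subseteq \cosupp\rho_2$, so there is $f_2 \in (\supp\rho_2)^*$ with $\theta_1(f_1) = \theta_2(f_2)$; evaluating both sides at $a_\alpha$ and comparing coefficients in the linearly independent family $(b_\beta)$ yields $f_1(q^{(1)}_{\beta\alpha}) = f_2(q^{(2)}_{\beta\alpha})$ for all $\alpha,\beta$. Hence $f_1\bigl(\sum c_{\beta\alpha}\, q^{(1)}_{\beta\alpha}\bigr) = f_2\bigl(\sum c_{\beta\alpha}\, q^{(2)}_{\beta\alpha}\bigr) = 0$, a contradiction. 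Commutativity of \eqref{EqLinearMapComodABTauQ} is then immediate from the definition of $\tau$, and surjectivity follows because $\tau$ hits each generator of $\supp\rho_1$.

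For the direction ($\Leftarrow$), given $\tau$ I set $f_2 := f_1 \circ \tau \in (\supp\rho_2)^*$ for any $f_1 \in (\supp\rho_1)^*$. The explicit formula $\theta_i(f)(a) = a_{(0)} f(a_{(1)})$ from Lemma~\ref{lemmasup}, together with the commutativity of \eqref{EqLinearMapComodABTauQ}, gives $\theta_1(f_1) = \theta_2(f_2)$, so $\cosupp\rho_1 \subseteq \cosupp\rho_2$. Uniqueness of $\tau$ is a short direct argument: if $\tau,\tau'$ both make \eqref{EqLinearMapComodABTauQ} commute, then $(\id_B \otimes (\tau-\tau'))\circ \rho_2 = 0$, and reading off coefficients in the basis $(b_\beta)$ forces $\tau - \tau'$ to vanish on every generator $q^{(2)}_{\beta\alpha}$ of $\supp\rho_2$.

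Finally, for the equivalence statement, $\rho_1 \equiv \rho_2$ produces via the $(\Rightarrow)$ direction maps $\tau\colon \supp\rho_2 \to \supp\rho_1$ and $\tau'\colon \supp\rho_1 \to \supp\rho_2$; both compositions $\tau\tau'$ and $\tau'\tau$ satisfy the defining diagram of the identity on the relevant support, so by uniqueness they equal the respective identities and $\tau$ is an isomorphism. Conversely, if $\tau$ is an isomorphism then $\tau^{-1}$ witnesses $\rho_2 \preccurlyeq \rho_1$. The main obstacle I anticipate is precisely the well-definedness step in $(\Rightarrow)$: this is where Lemma~\ref{lemmasup} does the actual work, converting a subspace inclusion inside $\Vect_F(A,B)$ into a concrete compatibility between the two matrix-coefficient families $q^{(1)}_{\beta\alpha}$ and $q^{(2)}_{\beta\alpha}$.
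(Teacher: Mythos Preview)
Your proof is correct and follows essentially the same strategy as the paper: both arguments hinge on Lemma~\ref{lemmasup} to translate the inclusion $\cosupp\rho_1\subseteq\cosupp\rho_2$ into the coordinate identity $f_1(q^{(1)}_{\beta\alpha})=f_2(q^{(2)}_{\beta\alpha})$, and both arrive at the same map $\tau$ characterized by $\tau(q^{(2)}_{\beta\alpha})=q^{(1)}_{\beta\alpha}$. The only cosmetic difference is that the paper first builds the injection $\tilde\tau\colon(\supp\rho_1)^*\hookrightarrow(\supp\rho_2)^*$ and then restricts its double dual $\tilde\tau^*$ to $\supp\rho_2$, whereas you define $\tau$ directly on the spanning set and verify well-definedness by contradiction; your route is slightly more elementary in that it sidesteps the passage through $(\supp\rho_i)^{**}$.
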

\begin{proof}
If $\rho_1 \preccurlyeq \rho_2$, then $\cosupp \rho_1 \subseteq \cosupp \rho_2$. Combining (the dual of) this inclusion with the isomorphisms $\theta_i \colon (\supp \rho_i)^*\to \cosupp \rho_i$ from Lemma \ref{lemmasup}, we obtain an injective linear map $\tilde\tau \colon (\supp\rho_1)^*\to (\supp\rho_2)^*$ that sends an element $f\in(\supp\rho_1)^*$ to the (unique) element $g\in(\supp\rho_2)^*$ satisfying $(\id_B\ot f)\rho_1(a)=(\id_B\ot g)\rho_2(a)$ for all $a\in A$.
\[
\xymatrix{
(\supp\rho_1)^*\ar[rr]^{\tilde\tau} \ar[d]_{\theta_1} && (\supp\rho_2)^* \\
\cosupp\rho_1 \ar@{^(->}[rr] && \cosupp \rho_2 \ar[u]_{\theta_2^{-1}}
}
\]
If we choose as above a basis $(a_\alpha)_\alpha$  in $A$ and a basis $(b_\beta)_\beta$ in $B$, and define elements $q_{\beta\alpha} \in Q_1$ and $p_{\beta\alpha} \in Q_2$ by $\rho_1(a_\alpha)=\sum_{\beta} b_\beta \otimes q_{\beta\alpha}$ and  $\rho_2(a_\alpha)=\sum_{\beta} b_\beta \otimes p_{\beta\alpha}$, then \begin{equation}\label{EqTauF}\tilde\tau(f)(p_{\beta\alpha})=f(q_{\beta\alpha}).\end{equation}
Consider the restriction $\tau$ of the dual map $\tilde\tau^* \colon (\supp\rho_2)^{**}\to (\supp\rho_1)^{**}$ to $\supp \rho_2$ treated as a subspace of $(\supp\rho_2)^{**}$.
By~\eqref{EqTauF}  we have $\tau(p_{\alpha\beta})=q_{\alpha\beta}$,
which means that we have a correctly defined linear map $\tau\colon \supp\rho_2\to \supp\rho_1$. The map $\tau$ is obviously surjective and makes the diagram~\eqref{EqLinearMapComodABTauQ} commutative.

Conversely, suppose that there exists a linear map $\tau\colon
\supp \rho_2 \to \supp \rho_1$ making the diagram~\eqref{EqLinearMapComodABTauQ} commutative. Then $\theta_2\circ \tau^*\circ \theta_1^{-1}$ defines an inclusion of $\cosupp\rho_1$ into $\cosupp\rho_2$.
Hence $\rho_1 \succcurlyeq \rho_2$.

Finally, if $\rho_1$ and $\rho_2$ are equivalent, then
$\rho_2 \succcurlyeq \rho_1$ implies the existence of a unique map $\tau\colon
\supp \rho_2 \to \supp \rho_1$  making the diagram~\eqref{EqLinearMapComodABTauQ} commutative,
while $\rho_1 \succcurlyeq \rho_2$ implies the existence of the map 
$\theta \colon \supp \rho_1 \to \supp \rho_2$ making the diagram
$$ \xymatrix{ A \ar[r] 
\ar[rd]
 & B \otimes \supp \rho_2  \\
& B \otimes \supp \rho_1 \ar@{-->}[u]_{\id_B \otimes \theta}} $$  commutative. Since both $\theta\tau$
and $\id_{\supp \rho_2}$ correspond to $\rho_2 \succcurlyeq \rho_2$
and $\tau\theta$
and $\id_{\supp \rho_1}$ correspond to $\rho_1 \succcurlyeq \rho_1$,
their uniqueness implies $\theta\tau = \id_{\supp \rho_2}$
and $\tau\theta = \id_{\supp \rho_1}$. Hence $\tau$ is indeed a linear isomorphism.
\end{proof}

\subsection{Closed pointwise finite dimensional subspaces in $\mathbf{Vect}_F(A,B)$}\label{SubsectionClLocFinSubspaces}

For any subspace $V\subseteq \Vect_F(A,B)$, the restriction of the evaluation map defines a linear map $\psi\colon V\ot A\to B$ such that $\cosupp\psi=V$. 
In this section we establish necessary and sufficient conditions (see Theorem~\ref{TheoremVectDualization} below) for a subspace $V \subseteq \mathbf{Vect}_F(A,B)$ to be ``dualizable'', i.e. to be equal to $\cosupp \rho$ for some linear map $\rho \colon A \to B \otimes Q$, or, equivalently (in view of Lemma \ref{lemmasup}), for $V$ to be isomorphic to the dual space of $\supp \rho$.\\

Recall that the {\it finite topology} on $\Vect_F(A,B)$ is the topology generated by the base of open sets given by all subsets of the form
$$W_{a_1,\ldots, a_n; b_1,\ldots,b_n}=\lbrace f \in \mathbf{Vect}_F(A,B) \mid
f(a_1)=b_1,\ldots, f(a_n)=b_n \rbrace$$ where $n\in\mathbb N$, $a_1,\ldots, a_n \in A$,
$b_1,\ldots,b_n \in B$. As this turns $\Vect_F(A,B)$ into a topological group, these open sets are also closed. Remark that the finite topology on $\Vect_F(A,B)$ is exactly the restriction of the compact-open topology on ${\bf Set}(A,B)$, where both $A$ and $B$ are regarded with the discrete topology.

\begin{lemma}\label{LemmaDualizationClosed}
For any vector spaces $A,B,Q$ and a linear map $\rho \colon A \to B \otimes Q$
the subspace $\rho^{\vee}(Q^* \otimes (-))\subseteq \mathbf{Vect}_F(A,B)$ is closed in the finite topology.
\end{lemma}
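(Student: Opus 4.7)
The plan is to show that the complement of $\cosupp \rho = \overline{\rho^{\vee}}(Q^*)$ is open by exhibiting, for each $g \notin \cosupp \rho$, an explicit basic open neighbourhood of $g$ disjoint from $\cosupp \rho$. Because the basic opens $W_{a_1,\ldots,a_n;b_1,\ldots,b_n}$ only constrain the values of $h$ at finitely many points, the task reduces to certifying non-membership of $g$ via a \emph{single finite} linear relation.

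First I would fix bases $(a_\alpha)_\alpha$ of $A$ and $(b_\beta)_\beta$ of $B$, write $\rho(a_\alpha)=\sum_\beta b_\beta\otimes q_{\beta\alpha}$ with $q_{\beta\alpha}\in\supp\rho$, and expand $g(a_\alpha)=\sum_\beta c_{\beta\alpha}b_\beta$. Unpacking the definition of $\overline{\rho^{\vee}}$, the equation $g=\overline{\rho^{\vee}}(f)$ for some $f\in Q^*$ is equivalent to $f(q_{\beta\alpha})=c_{\beta\alpha}$ for all $\alpha,\beta$. By the standard extension criterion for linear functionals on a spanning set, such an $f$ exists if and only if every finite relation $\sum_i \lambda_i q_{\beta_i\alpha_i}=0$ in $Q$ is matched by $\sum_i \lambda_i c_{\beta_i\alpha_i}=0$. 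I will refer to this as the \emph{finitary membership criterion} for $\cosupp\rho$.

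Since $g\notin\cosupp\rho$, the criterion fails on some concrete finite relation $\sum_i \lambda_i q_{\beta_i\alpha_i}=0$ with $\sum_i \lambda_i c_{\beta_i\alpha_i}\neq 0$. Let $\alpha_1,\ldots,\alpha_m$ be the distinct $\alpha$-indices appearing, and consider the basic open neighbourhood
\[
W = W_{a_{\alpha_1},\ldots,a_{\alpha_m};\,g(a_{\alpha_1}),\ldots,g(a_{\alpha_m})}
=\{h\in\mathbf{Vect}_F(A,B) : h(a_{\alpha_j})=g(a_{\alpha_j})\ \text{for } j=1,\ldots,m\}.
\]
For any $h\in W$, writing $h(a_{\alpha_j})=\sum_\beta d_{\beta\alpha_j}b_\beta$, linear independence of $(b_\beta)_\beta$ gives $d_{\beta\alpha_j}=c_{\beta\alpha_j}$ for every $\beta$ and $j$; in particular $d_{\beta_i\alpha_i}=c_{\beta_i\alpha_i}$ for each $i$. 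If such an $h$ lay in $\cosupp\rho$, the finitary criterion would force $\sum_i \lambda_i c_{\beta_i\alpha_i}=\sum_i\lambda_i d_{\beta_i\alpha_i}=0$, contradicting the choice of the witness relation. Hence $W\cap\cosupp\rho=\emptyset$, and the complement of $\cosupp\rho$ is open, as desired.

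I expect no serious obstacle: the only non-trivial ingredient is the finitary membership criterion, which is just the familiar extension-by-linearity principle phrased via relations on a generating set; the rest is bookkeeping with one fixed relation. One small point to watch is that the distinct $\alpha$-indices appearing in the witness relation must be used to define the neighbourhood (not the $\beta$-indices), since the finite-topology opens control $h$ pointwise on $A$, not coordinate-wise on $B$.
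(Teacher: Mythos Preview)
Your proposal is correct and follows essentially the same route as the paper. Both arguments fix bases, reduce membership in $\cosupp\rho$ to the solvability of the system $q^*(q_{\beta\alpha})=c_{\beta\alpha}$, and then observe that unsolvability is witnessed by a single finite linear relation among the $q_{\beta\alpha}$, which in turn determines a basic open neighbourhood disjoint from $\cosupp\rho$; the paper phrases the key step as ``every finite subsystem is solvable $\Rightarrow$ the full system is solvable'' (via a maximal linearly independent subset and Zorn's lemma), while you phrase the equivalent contrapositive directly as the extension criterion for linear functionals.
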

\begin{proof}
Let $(a_\alpha)_\alpha$ be a basis in $A$ and let $(b_\beta)_\beta$ be a basis in $B$.
Again, define $q_{\beta\alpha}\in Q$ by $\rho(a_\alpha)=\sum_{\beta} b_\beta \otimes q_{\beta\alpha}$.
(For a given $\alpha$ only a finite number of $q_{\beta\alpha}$ is nonzero.)

Fix $f \in \mathbf{Vect}_F(A,B)$ and define $f_{\beta\alpha} \in F$ by
$f(a_\alpha)=\sum_{\beta} f_{\beta\alpha} b_\beta$. (Again, for a given $\alpha$ only a finite number of $f_{\beta\alpha}$ is nonzero.)
Then $f=\rho^{\vee}(q^* \otimes (-))$ for some $q^* \in Q^*$
if and only if 
\begin{equation}\label{EqSystemFqalphabeta} 
q^*(q_{\beta\alpha})= f_{\beta\alpha}\text{ for all }\alpha\text{ and }\beta.
\end{equation}

In order to finish the proof, we have to show that for every such $f$ one can choose $n\in\mathbb N$ and $\alpha_1, \ldots, \alpha_n$, $\beta_1, \ldots, \beta_n$, i.e. a finite number of indices,
such that there is no $q^* \in Q^*$ with $q^*(q_{\beta_i\alpha_i})= f_{\beta_i\alpha_i}$
for all $1\leqslant i \leqslant n$. This would mean that the open neighbourhood $$\lbrace
g \in \mathbf{Vect}_F(A,B) \mid g(a_{\alpha_1})=f(a_{\alpha_1}),
\ldots, g(a_{\alpha_n})=f(a_{\alpha_n}) \rbrace$$ of $f$ has empty intersection
with $\rho^{\vee}(Q^* \otimes (-))$.

Suppose that for all finite subsets $\alpha_1, \ldots, \alpha_n$, $\beta_1, \ldots, \beta_n$
of indices the system of linear equations \begin{equation*}
q^*(q_{\beta_i\alpha_i})= f_{\beta_i\alpha_i},\ 1\leqslant i \leqslant n,\end{equation*} has
a solution $q^* \in Q^*$. Then for every $\lambda_i \in F$ such that $\sum_{i=1}^n \lambda_i q_{\beta_i\alpha_i} = 0$ we have $\sum_{i=1}^n \lambda_i f_{\beta_i\alpha_i} = 0$.
In other words, we may exclude equations with left hand sides that are linearly dependent from the 
left hand sides of the other equations.
Denote by $(q_\gamma)_\gamma$ a maximal linearly independent subsystem of $(q_{\beta\alpha})_{\alpha,\beta}$, which exists by the Zorn lemma. Denote $f_\gamma:=f_{\beta\alpha}$
for $q_\gamma = q_{\beta\alpha}$.
Then the system 
\begin{equation*} q^*(q_\gamma)= f_\gamma\text{ for all }\gamma\end{equation*}
is equivalent to \eqref{EqSystemFqalphabeta}.
Since $q_\gamma$ are linearly independent, we may include $(q_\gamma)_\gamma$
into a basis in $Q$ and the system \eqref{EqSystemFqalphabeta} has a solution.
In this case $f \in \rho^{\vee}(Q^* \otimes (-))$.

Therefore, for a given $f \in \mathbf{Vect}_F(A,B)$ we have only two possibilities: either $f \in \rho^{\vee}(Q^* \otimes (-))$ or $f$ does not belong to $\rho^{\vee}(Q^* \otimes (-))$ together with some open neighbourhood. Hence $\rho^{\vee}(Q^* \otimes (-))$ is indeed closed in the finite topology.
\end{proof}

\begin{definition}
We say that a subspace $V\subseteq \mathbf{Vect}_F(A,B)$ is \textit{pointwise finite dimensional} if for every $a\in A$
the subspace $Va := \lbrace va \mid v\in V \rbrace \subseteq B$ is finite dimensional.
\end{definition}

\begin{lemma}\label{LemmaDualizationPointwiseFinDim}
For any vector spaces $A,B,Q$ and a linear map $\rho \colon A \to B \otimes Q$
the subspace $\rho^{\vee}(Q^* \otimes (-))\subseteq \mathbf{Vect}_F(A,B)$ is pointwise finite dimensional.
\end{lemma}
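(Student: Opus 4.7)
The plan is to unwind the definition of $\rho^{\vee}$ and exploit the fact that any element of a tensor product has finite rank.

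Fix $a\in A$. Since $\rho(a)\in B\otimes Q$, there exist $n\in\mathbb N$, elements $b_1,\dots,b_n\in B$ and $q_1,\dots,q_n\in Q$ (depending on $a$) such that
\[
\rho(a)=\sum_{i=1}^n b_i\otimes q_i.
\]
Recalling that $\rho^{\vee}(q^*\otimes a)=q^*(a_{(1)})a_{(0)}$, for every $q^*\in Q^*$ we compute
\[
\rho^{\vee}(q^*\otimes a)=\sum_{i=1}^n q^*(q_i)\,b_i\in\langle b_1,\dots,b_n\rangle_F.
\]

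Therefore the subspace
\[
\bigl(\rho^{\vee}(Q^*\otimes(-))\bigr)\,a=\{\rho^{\vee}(q^*\otimes a)\mid q^*\in Q^*\}\subseteq\langle b_1,\dots,b_n\rangle_F
\]
is contained in a finite-dimensional subspace of $B$ and hence is itself finite dimensional. Since $a\in A$ was arbitrary, $\rho^{\vee}(Q^*\otimes(-))$ is pointwise finite dimensional, as required.

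There is really no obstacle here: the statement is an immediate consequence of the finiteness built into the definition of the tensor product, together with the formula $\rho^{\vee}(q^*\otimes a)=q^*(a_{(1)})a_{(0)}$. The only thing one must be careful about is that the elements $b_i,q_i$ depend on the chosen $a$, but this is harmless since pointwise finite dimensionality is a condition checked one $a$ at a time.
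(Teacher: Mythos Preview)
Your proof is correct and follows exactly the same approach as the paper's own proof: fix $a\in A$, write $\rho(a)$ as a finite sum of simple tensors, and observe that $\rho^{\vee}(q^*\otimes a)$ lands in the span of the $B$-components. The only difference is notational---the paper (with a minor typo) calls the $B$-components $a_i$ rather than $b_i$.
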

\begin{proof} For every $a\in A$ there exists $n\in\mathbb N$, $a_i \in A$,
and $q_i \in Q$ such that
$$\rho(a)=a_1\otimes q_1 + \dots + a_n \otimes q_n.$$ Hence $\rho^{\vee}(q^*\otimes a)=
\sum_{k=1}^n q^*(q_j)a_j \in \langle a_1, \ldots, a_n \rangle_F$ for every $q^* \in Q^*$
and $\dim_F(Va)\leqslant n$.
\end{proof}

The following result is well-known, but we add a short proof for the reader's convenience.

\begin{lemma}\label{LemmaVVstarstarOnFinite} Let $V$ be a vector space. Then $V$ is dense in $V^{**}$ with respect to the finite topology. In other words, for any finite dimensional subspace $T \subseteq V^*$ and any $v^{**} \in V^{**}$ there exists $v \in V$ such that $v\bigr|_{T}$ (viewed as an element of $V^{**}$) coincides with $v^{**}\bigr|_{T}$.
\end{lemma}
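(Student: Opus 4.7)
The plan is to reduce the problem to a finite system of linear conditions on $V$ and then invoke linear independence of a basis of $T$ to solve it. First, I would choose a basis $f_1,\dots,f_n$ of the finite dimensional subspace $T\subseteq V^*$. By linearity of both $f\mapsto f(v)$ and $v^{**}$, the required equality $f(v)=v^{**}(f)$ for all $f\in T$ is equivalent to the finite system
\begin{equation*}
f_i(v)=v^{**}(f_i),\qquad 1\leqslant i\leqslant n.
\end{equation*}

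Next, I would consider the evaluation map $\Phi\colon V\to F^n$ given by $\Phi(v)=(f_1(v),\dots,f_n(v))$, and denote $c:=(v^{**}(f_1),\dots,v^{**}(f_n))\in F^n$. The whole lemma reduces to showing $c\in\Phi(V)$, and in fact I will show that $\Phi$ is surjective. Suppose, for contradiction, that $\Phi(V)$ is a proper subspace of $F^n$. Then there exist scalars $\lambda_1,\dots,\lambda_n\in F$, not all zero, such that $\sum_{i=1}^n\lambda_i f_i(v)=0$ for every $v\in V$, i.e.\ $\sum_{i=1}^n\lambda_i f_i=0$ in $V^*$. This contradicts the linear independence of $f_1,\dots,f_n$.

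Hence $\Phi$ is surjective, so one can pick $v\in V$ with $\Phi(v)=c$, i.e.\ $f_i(v)=v^{**}(f_i)$ for every $i$. By linearity this gives $f(v)=v^{**}(f)$ for all $f\in T$, which is exactly the density statement in the finite topology on $V^{**}$. There is no real obstacle here; the only subtle point is to notice that the equality only needs to be checked on a basis of $T$, which is what turns an a priori infinite set of conditions into a finite linear system to which the linear independence argument applies.
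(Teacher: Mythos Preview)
Your proof is correct and follows essentially the same approach as the paper: both choose a basis $f_1,\dots,f_n$ of $T$ and reduce the problem to the surjectivity of the evaluation map $V\to F^n$, which follows from the linear independence of the $f_i$. The only cosmetic difference is that the paper carries this out constructively via Gauss--Jordan elimination on the rows representing the $f_i$ in a basis of $V$, thereby exhibiting $v$ explicitly, whereas you argue surjectivity by contradiction.
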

\begin{proof}
Choosing a basis in $V$, we can represent any $v^{*} \in V^{*}$ as a (possibly, infinite)
row of the values of $v^{*}$ on the basis elements.
Choose, in addition, a basis $v^*_1, \ldots, v^*_n$ in $T$. Performing Gauss--Jordan elimination
on the rows corresponding to  $v^*_1, \ldots, v^*_n$, we can pass to another basis
$\tilde v^*_1, \ldots, \tilde v^*_n$ in $V^{*}$ such that the corresponding rows contain an identity $n\times n$ submatrix. Taking the basis elements $v_1, \ldots, v_n$ corresponding to the columns of this submatrix, we define $v:= \sum_{i=1}^n v^{**}(\tilde v^*_i)v_i$.
\end{proof}

Now we are ready to prove the main theorem of this section characterizing dualizable subspaces in 
$V \subseteq \mathbf{Vect}_F(A,B)$

\begin{theorem}\label{TheoremVectDualization}
Let $V \subseteq \mathbf{Vect}_F(A,B)$ be a subspace for 
 vector spaces $A$ and $B$. Then $V=\rho^{\vee}(Q^* \otimes (-))$
for some vector space $Q$ and a linear map $\rho \colon A \to B \otimes Q$
if and only if $V$ is pointwise finite dimensional and closed in the finite topology.
\end{theorem}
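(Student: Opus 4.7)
The forward direction is immediate: if $V = \rho^{\vee}(Q^* \otimes (-))$ for some $\rho \colon A \to B \otimes Q$, then Lemma~\ref{LemmaDualizationClosed} provides closedness in the finite topology and Lemma~\ref{LemmaDualizationPointwiseFinDim} provides pointwise finite dimensionality. The plan is to prove the converse by exhibiting a canonical $\rho$ with $Q := V^*$.

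Suppose $V$ is pointwise finite dimensional and closed in the finite topology. For each $a \in A$ the evaluation map $\mathrm{ev}_a \colon V \to B$, $v \mapsto v(a)$, has image $Va$, which is finite dimensional by assumption. Hence $\mathrm{ev}_a$ lies in the image of the canonical injection $\iota \colon B \otimes V^* \hookrightarrow \mathbf{Vect}_F(V,B)$ sending $b \otimes \ell$ to $(v \mapsto \ell(v)b)$ (whose image consists exactly of the finite rank maps). I would then define $\rho \colon A \to B \otimes V^*$ by declaring $\rho(a)$ to be the unique preimage of $\mathrm{ev}_a$ under $\iota$; linearity in $a$ follows from the linearity of $a \mapsto \mathrm{ev}_a$ together with the injectivity of $\iota$.

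Unwinding definitions, if $\rho(a) = \sum_k u_k \otimes c_k$ with $u_k \in B$ and $c_k \in V^*$, then $\rho^{\vee}(\phi \otimes a) = \sum_k \phi(c_k)\, u_k$ for $\phi \in V^{**} = Q^*$. For $v \in V$, taking $\phi = \mathrm{ev}_v$ yields $\rho^{\vee}(\mathrm{ev}_v \otimes a) = \sum_k c_k(v)\, u_k = \mathrm{ev}_a(v) = v(a)$, which shows the inclusion $V \subseteq \cosupp \rho$.

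For the reverse inclusion, given $\phi \in V^{**}$ one needs the linear map $f_\phi \colon A \to B$, $a \mapsto \rho^{\vee}(\phi \otimes a)$, to belong to $V$; this is the step that genuinely uses both hypotheses and is the main obstacle. For any finite tuple $a_1, \dots, a_n \in A$, let $T \subseteq V^*$ be the finite dimensional subspace spanned by the elements $c_{i,k} \in V^*$ appearing in the finite sums $\rho(a_i) = \sum_k u_{i,k} \otimes c_{i,k}$. By Lemma~\ref{LemmaVVstarstarOnFinite} applied to the canonical embedding $V \hookrightarrow V^{**}$ and the finite dimensional $T$, there exists $v \in V$ with $\mathrm{ev}_v|_T = \phi|_T$, and consequently $f_\phi(a_i) = v(a_i)$ for $i = 1, \dots, n$. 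This says that every basic open neighbourhood of $f_\phi$ in the finite topology meets $V$, so $f_\phi$ lies in the closure of $V$. Since $V$ is closed by assumption, $f_\phi \in V$, completing the proof. The only delicate point is keeping the identification $B \otimes V^* \hookrightarrow \mathbf{Vect}_F(V,B)$ precise enough that the density argument applies on the nose; the rest is bookkeeping with the Sweedler-type notation for $\rho^{\vee}$.
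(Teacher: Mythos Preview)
Your proof is correct and follows essentially the same approach as the paper: both take $Q=V^*$, construct $\rho$ from the pointwise finite dimensionality of $V$, and obtain the inclusion $\cosupp\rho\subseteq V$ by combining the density Lemma~\ref{LemmaVVstarstarOnFinite} with closedness of $V$. The only cosmetic difference is that you define $\rho$ invariantly via the identification of $B\otimes V^*$ with the finite-rank maps $V\to B$, whereas the paper picks bases in $A$ and $B$; the underlying argument is the same.
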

\begin{proof}
The ``only if'' part was proved in Lemmas~\ref{LemmaDualizationClosed} and~\ref{LemmaDualizationPointwiseFinDim}. 

Conversely, suppose $V$ is pointwise finite dimensional and closed in the finite topology. We will prove the statement by putting $Q=V^*$.
Again choose bases $(a_\alpha)_\alpha$ in $A$ and $(b_\beta)_\beta$ in $B$.
Define $\rho  \colon A \to B \otimes V^*$
as follows. For a given $\alpha$ the pointwise finite dimensionality of $V$ implies
that $Va_\alpha \subseteq \langle b_{\beta_1}, \ldots, b_{\beta_n} \rangle_F$
for some $n\in\mathbb N$ and $\beta_1, \ldots, \beta_n$.
Then there exist $v_i^* \in V^*$ such that
$f(a_\alpha)=\sum_{i=1}^n v_i^*(f)b_{\beta_i}$ for all $f\in V$.
Let $\rho(a_\alpha) := \sum_{i=1}^n b_{\beta_i} \otimes v_i^*$.

Obviously, $V \subseteq \rho^{\vee}(V^{**} \otimes (-))$.
Let us prove the converse inclusion. Let $v^{**} \in V^{**}$.
We claim that $\rho^{\vee}(v^{**} \otimes (-)) \in V$.

Let $\rho(a_\alpha) = \sum_{\beta} b_{\beta} \otimes v^*_{\beta\alpha}$.
(For a given $\alpha$ only a finite number of $v^*_{\beta\alpha}$ is nonzero.)
Suppose $\rho^{\vee}(v^{**} \otimes (-)) \notin V$. Since
$V$ is closed in the finite topology,
there exist $n\in\mathbb N$ and $\alpha_1, \ldots, \alpha_n$
such that for any $f \in V$ 
we have $$\sum_{\beta} v^*_{\beta\alpha_i}(f) b_{\beta}=f(a_{\alpha_i}) \ne \rho^{\vee}(v^{**} \otimes a_{\alpha_i})= \sum_{\beta} v^{**}(v^*_{\beta\alpha_i}) b_{\beta}
\text{ for some }1\leqslant i \leqslant n.$$
In other words,
the system 
\begin{equation}\label{EqSystemFVstaralphabeta} v^*_{\beta\alpha_i}(f)=v^{**}(v^*_{\beta\alpha_i})\text{ for all }\beta\text{ and all }1\leqslant i \leqslant n\end{equation}
has no solution in $f \in V$. 
Since for a fixed $i$ only a finite number of $v^*_{\beta\alpha_i}$ is nonzero,
the system~\eqref{EqSystemFVstaralphabeta} is finite.
Lemma~\ref{LemmaVVstarstarOnFinite} implies that we get a contradiction and $\rho^{\vee}(v^{**} \otimes (-)) \in V$.
Hence $V = \rho^{\vee}(V^{**} \otimes (-))$.
\end{proof}

\begin{remark}
If both $A$ and $B$ are finite dimensional, then the finite topology on $\mathbf{Vect}_F(A,B)$
is discrete and all subspaces of $\mathbf{Vect}_F(A,B)$ are closed and pointwise finite dimensional.
\end{remark}

The lemmas below will be used in Sections~\ref{SectionDuality(Co)measurings} and~\ref{SectionCoactions}.
\begin{lemma}\label{LemmaClosurePwFD}
Let $V \subseteq \mathbf{Vect}_F(A,B)$ be a subspace for some vector spaces $A$ and $B$ over
a field $F$. If $V$ is pointwise finite dimensional,
then its closure $\overline V$ in the finite topology is pointwise finite dimensional too.
\end{lemma}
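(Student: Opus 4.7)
The key observation is that the basic open sets in the finite topology on $\mathbf{Vect}_F(A,B)$ allow us to prescribe the values of a map on a finite set of points, and in particular on a single point. This gives us very tight control over the values taken by limits of nets in $V$.

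The plan is to show the stronger statement that $\overline{V}a \subseteq Va$ for every $a \in A$; finite dimensionality of $\overline{V}a$ will then be immediate from the assumption on $V$. Fix $a \in A$ and let $\bar v \in \overline V$. Consider the singleton basic open set
\[
W := W_{a;\, \bar v(a)} = \bigl\{ f \in \mathbf{Vect}_F(A,B) \mid f(a) = \bar v(a) \bigr\},
\]
which is an open neighbourhood of $\bar v$ in the finite topology. Since $\bar v$ lies in the closure of $V$, the intersection $W \cap V$ is nonempty, so there exists $v \in V$ with $v(a) = \bar v(a)$. Therefore $\bar v(a) = v(a) \in Va$, which establishes $\overline V a \subseteq Va$.

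Since $V$ is pointwise finite dimensional, $Va$ is finite dimensional, hence so is $\overline V a$. As $a$ was arbitrary, $\overline V$ is pointwise finite dimensional. There is no real obstacle here: the argument is a direct unravelling of the definition of the finite topology, the crucial point being that fixing the value at a single point is already an open condition.
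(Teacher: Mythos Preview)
Your proof is correct and follows essentially the same approach as the paper: both arguments show that $\overline V a \subseteq Va$ (hence $\overline V a = Va$) by using that any open neighbourhood of a point in $\overline V$ meets $V$, applied to the neighbourhood prescribing the value at $a$. The paper phrases the closure condition for $n$ points $a_1,\ldots,a_n$ but only uses the case $n=1$, exactly as you do.
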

\begin{proof}
Let $f\in \overline V$. Then for any $n\in\mathbb N$ and $a_1, \ldots, a_n \in A$ there exists $g\in V$
such that $f(a_i)=g(a_i)$, $1\leqslant i \leqslant n$.
Hence for every $a\in A$ we have $\overline V a = Va$ and $\overline V$ is pointwise finite dimensional too.
\end{proof}

\begin{lemma}\label{LemmaLinearMap(Co)modClosure}
Let $A,B,P$ be vector spaces and $\rho \colon A \to B \otimes P^*$ a linear map, $\rho(a)=a_{(0)}\otimes a_{(1)}$
for $a\in A$. Define the linear map $\psi \colon P \otimes A \to B$ by $\psi(p\otimes a):=a_{(1)}(p)a_{(0)}$
for all $a\in A$, $p\in P$.
Then $\cosupp \rho = \overline{\cosupp \psi}$ (the closure is taken in the finite topology).
\end{lemma}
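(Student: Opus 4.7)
The plan is to prove the two inclusions $\overline{\cosupp\psi}\subseteq\cosupp\rho$ and $\cosupp\rho\subseteq\overline{\cosupp\psi}$ separately, using the canonical embedding $\iota\colon P\hookrightarrow P^{**}$ as the bridge between $\psi$ and $\rho^{\vee}$.

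For the easy inclusion, I would first observe that for $p\in P$ and $a\in A$ we have
\[
\psi(p\otimes a)=a_{(1)}(p)\,a_{(0)}=\iota(p)(a_{(1)})\,a_{(0)}=\rho^{\vee}(\iota(p)\otimes a),
\]
so $\overline\psi(p)=\rho^{\vee}(\iota(p)\otimes(-))\in\rho^{\vee}(P^{**}\otimes(-))=\cosupp\rho$. Hence $\cosupp\psi\subseteq\cosupp\rho$. Since Lemma~\ref{LemmaDualizationClosed} guarantees that $\cosupp\rho$ is closed in the finite topology, taking closures gives $\overline{\cosupp\psi}\subseteq\cosupp\rho$.

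For the harder inclusion, take an arbitrary $f\in P^{**}$ and set $g:=\rho^{\vee}(f\otimes(-))\in\cosupp\rho$; I need to show $g$ lies in every basic open neighbourhood of itself meeting $\cosupp\psi$. Fix finitely many $a_1,\dots,a_n\in A$ and write $\rho(a_i)=\sum_{j}b_{ij}\otimes q^*_{ij}$ with only finitely many nonzero terms for each $i$. Then
\[
g(a_i)=\sum_{j}f(q^*_{ij})\,b_{ij},\qquad \psi(p\otimes a_i)=\sum_{j}q^*_{ij}(p)\,b_{ij}=\sum_j \iota(p)(q^*_{ij})\,b_{ij},
\]
so it suffices to find $p\in P$ with $\iota(p)(q^*_{ij})=f(q^*_{ij})$ for all the (finitely many) pairs $(i,j)$ appearing. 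This is exactly the finite-dimensional approximation provided by Lemma~\ref{LemmaVVstarstarOnFinite}: the subspace $T:=\langle q^*_{ij}\rangle_F\subseteq P^*$ is finite dimensional, so there exists $p\in P$ with $\iota(p)|_T=f|_T$. For such a $p$ we get $\psi(p\otimes a_i)=g(a_i)$ for $i=1,\dots,n$, which means $\overline\psi(p)$ lies in the chosen basic open neighbourhood of $g$. Therefore $g\in\overline{\cosupp\psi}$.

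The only conceptual step is recognising that the missing elements of $\cosupp\psi$ relative to $\cosupp\rho$ correspond precisely to the functionals in $P^{**}\setminus\iota(P)$, and that the finite topology on $\mathbf{Vect}_F(A,B)$ tests exactly the right finite collections of evaluations for Lemma~\ref{LemmaVVstarstarOnFinite} to apply. Once this is identified, the verification is a direct unwinding of definitions; no further machinery is needed.
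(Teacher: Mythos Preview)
Your proof is correct and follows essentially the same approach as the paper: both arguments obtain $\cosupp\psi\subseteq\cosupp\rho$ via the canonical embedding $P\hookrightarrow P^{**}$, invoke Lemma~\ref{LemmaDualizationClosed} to pass to the closure, and then use Lemma~\ref{LemmaVVstarstarOnFinite} on the finite-dimensional span of the $q^*_{ij}$ appearing in $\rho(a_1),\ldots,\rho(a_n)$ to establish density. The only cosmetic difference is that the paper fixes global bases in $A$ and $B$ at the outset, whereas you work directly with the finite tensor expansions of $\rho(a_i)$; your phrasing ``$g$ lies in every basic open neighbourhood of itself meeting $\cosupp\psi$'' is garbled (you mean that every basic open neighbourhood of $g$ meets $\cosupp\psi$), but the subsequent argument makes the intended statement clear.
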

\begin{proof}
Obviously, $\cosupp \psi \subseteq \cosupp \rho$. By Lemma~\ref{LemmaDualizationClosed},
$\cosupp \rho$ is closed. Hence it is sufficient to prove that for every 
$f \in \cosupp \rho$, $n\in\mathbb N$, $a_1, \ldots, a_n \in A$ there exists $g \in \cosupp \psi$
such that $f(a_i)=g(a_i)$ for all $1\leqslant i \leqslant n$. 

Again choose bases $(a_\alpha)_\alpha$ in $A$ and $(b_\beta)_\beta$ in $B$
and define  $q_{\beta\alpha} \in P^*$ by $\rho(a_\alpha)=\sum_{\beta} b_\beta \otimes q_{\beta\alpha}$.
By Lemma~\ref{LemmaVVstarstarOnFinite},
 for every $n\in\mathbb N$, indices $\alpha_1, \ldots, \alpha_n$, and $p^{**}\in P^{**}$,
 there exists $p\in P$ such that
  $q_{\beta\alpha_i}(p)=p^{**}(q_{\beta\alpha_i})$
for all $1\leqslant i \leqslant n$ and $\beta$  
  (here we again use that for a fixed $i$ only finite number of $q_{\beta\alpha}$
 is nonzero). Hence the values of the linear maps $A \to B$
 corresponding to $p$ and $p^{**}$ coincide on $a_{\alpha_1}, \ldots, a_{\alpha_n}$.
 Since any element of $A$ can be written as a finite linear combination of some $a_\alpha$,
 the lemma follows.
\end{proof}

\section{Measurings and comeasurings between $\Omega$-algebras}

\subsection{$\Omega$-algebras}\label{SectionOmegaAlgebras}\

Let $\Omega$ be a {\em span} from $\mathbb Z_+$ to $\mathbb Z_+$ in $\bf Sets$. In other words, $\Omega$ is a set endowed with two maps $s,t\colon \Omega\to \mathbb Z_+$ (which we will call {\em source} and {\em target}). 

\begin{definition}
We call an \textit{$\Omega$-algebra} a vector space $A$ endowed with linear maps $\omega_A \colon A^{\otimes s(\omega)} \to
A^{\otimes t(\omega)}$ for every $\omega \in \Omega$. (We will usually drop the subscript $A$ and denote
 the map by $\omega$ too.) Here use the convention that $A^{\otimes 0} := F$.
\end{definition}

A \textit{morphism} of $\Omega$-algebras $f\colon A\to B$ is a linear map satisfying 
$$f^{\ot t(\omega)}\circ \omega_A=\omega_B\circ f^{\ot s(\omega)}$$
for all $\omega\in\Omega$: 
\[
\xymatrix{
A^{\ot s(\omega)} \ar[rr]^-{f^{\ot s(\omega)}} \ar[d]_{\omega_A} && B^{\ot s(\omega)} \ar[d]^{\omega_B}\\
A^{\ot t(\omega)} \ar[rr]^-{f^{\ot t(\omega)}}  && B^{\ot t(\omega)} 
}
\]
The category of $\Omega$-algebras is denoted by $\Omega\text{-}\mathbf{Alg}_F$. 

The above definitions make sense in any monoidal category (instead of $\Vect_F$).

\begin{examples} 
\begin{enumerate}
\item
$\varnothing$-algebras are just vector spaces.
\item
\label{ExampleAlgebra} $\lbrace \mu \rbrace$-algebras where $s(\mu)=2$, $t(\mu)=1$
are just not necessarily associative not necessarily unital algebras $A$
with a multiplication $\mu \colon A  \otimes A \to A$.
\item \label{ExampleUnitalAlgebra} Unital algebras $A$
with a multiplication $\mu \colon A  \otimes A \to A$
and an identity element $1_A$
are examples of $\lbrace \mu, u \rbrace$-algebras where $s(\mu)=2$, $t(\mu)=1$, $s(u)=0$, $t(u)=1$
and $u \colon F \to A$ is defined by $u(\alpha)=\alpha 1_A$ for all $\alpha \in F$.
\item \label{ExampleCoalgebra} Coalgebras $C$
with a comultiplication $\Delta \colon C \to C  \otimes C$
and a counit $\varepsilon \colon C \to F$
are examples of $\lbrace \Delta, \varepsilon \rbrace$-algebras where $s(\Delta)=1$, $t(\Delta)=2$, $s(\varepsilon)=1$, $t(\varepsilon)=0$.
\item
Braided vector spaces $W$
with a braiding $\tau \colon W \otimes W \to W \otimes W$
are examples of $\lbrace \tau \rbrace$-algebras where $s(\tau)=2$, $t(\tau)=2$.
\item
Bialgebras, Hopf algebras, Frobenius algebras, quantum torsors, ternary algebras, etc.\ are examples of $\Omega$-algebras too for appropriate $\Omega$. 
\end{enumerate}
\end{examples}

Obviously, tensor products of $\Omega$-algebras are again $\Omega$-algebras.
For a span $\Omega$, we denote by $\Omega^*$ the dual span, which is the span obtained by interchanging the roles of source and target.
Namely, for a given set $\Omega$ together with maps $s,t\colon\Omega \to \mathbb Z_+$
define the set $\Omega^* := \left\lbrace \omega^* \mid \omega\in\Omega \right\rbrace$
and the maps $s^* \colon \Omega^* \to \mathbb Z_+$ and $t^* \colon \Omega^* \to \mathbb Z_+$
where $s^*(\omega^*):=t(\omega)$ and $t^*(\omega^*):=s(\omega)$
for all $\omega \in \Omega$.
 Then the classical duality between algebras and coalgebras is naturally extended to the duality between $\Omega$- and $\Omega^*$-algebras:
    
\begin{proposition}
\begin{enumerate}[(i)]
\item If $A$ is an $\Omega$-algebra such that $\im s\subseteq\{0,1\}$, then $A^*$ is an $\Omega^*$-algebra
where the operations on $A^*$ are duals of the corresponding operations on $A$. More precisely, this construction defines a contravariant functor $\Omega\text{-}\mathbf{Alg}_F\to \Omega^*\text{-}\mathbf{Alg}_F$. 
\item 
If $A$ is an $\Omega$-algebra that is finite dimensional as a vector space, then $A^*$ is an $\Omega^*$-algebra. More precisely, this construction defines a contravariant equivalence of categories $\Omega\text{-}\mathbf{Alg}_F^\mathrm{fd}\to \Omega^*\text{-}\mathbf{Alg}_F^\mathrm{fd}$, where the index $fd$ means that we  consider the categories of finite-dimensional $\Omega$-algebras. 
\end{enumerate}
\end{proposition}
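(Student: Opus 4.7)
The plan is to promote the contravariant functor $(-)^* \colon \Vect_F \to \Vect_F$ to the categories of $\Omega$-algebras using the canonical natural transformation
$$\phi_n \colon (A^*)^{\otimes n} \to (A^{\otimes n})^*, \qquad \phi_n(f_1 \otimes \cdots \otimes f_n)(a_1 \otimes \cdots \otimes a_n) = f_1(a_1)\cdots f_n(a_n),$$
with the convention $\phi_0 = \id_F$ and $\phi_1 = \id_{A^*}$. Three features of $\phi_n$ will drive the argument: (a) $\phi_n$ is always injective; (b) $\phi_n$ is a bijection whenever $n\in\{0,1\}$; and (c) $\phi_n$ is a bijection for every $n$ when $A$ is finite-dimensional. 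The contravariant functoriality of $(-)^*$ then provides, for each $\omega \in \Omega$, the dual map $\omega_A^* \colon (A^{\otimes t(\omega)})^* \to (A^{\otimes s(\omega)})^*$.

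For part (i), I would define the structure map on $A^*$ corresponding to $\omega^* \in \Omega^*$ by
$$(\omega^*)_{A^*} \;:=\; \phi_{s(\omega)}^{-1} \circ \omega_A^* \circ \phi_{t(\omega)} \colon (A^*)^{\otimes t(\omega)} \longrightarrow (A^*)^{\otimes s(\omega)},$$
which has source $s^*(\omega^*) = t(\omega)$ and target $t^*(\omega^*) = s(\omega)$ as required. The inverse $\phi_{s(\omega)}^{-1}$ exists precisely because $s(\omega) \in \{0,1\}$, which is where the hypothesis $\im s \subseteq \{0,1\}$ is used. To obtain functoriality, I would take a morphism $f \colon A \to B$ of $\Omega$-algebras, dualize the defining equation $f^{\otimes t(\omega)} \circ \omega_A = \omega_B \circ f^{\otimes s(\omega)}$, and transport through the natural transformation $\phi$ (using $\phi_n \circ (f^*)^{\otimes n} = (f^{\otimes n})^* \circ \phi_n$) to check that $f^* \colon B^* \to A^*$ intertwines $(\omega^*)_{B^*}$ and $(\omega^*)_{A^*}$. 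The identity and composition axioms are inherited from $(-)^*$ on $\Vect_F$.

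For part (ii), the hypothesis on $\im s$ is no longer needed, since finite-dimensionality of $A$ makes every $\phi_n$ invertible, so the construction above goes through for arbitrary $s,t$. To upgrade the resulting contravariant functor $\Omega\text{-}\mathbf{Alg}_F^{\mathrm{fd}} \to \Omega^*\text{-}\mathbf{Alg}_F^{\mathrm{fd}}$ to a contravariant equivalence, I would exhibit the evaluation map $\mathrm{ev}_A \colon A \to A^{**}$, $\mathrm{ev}_A(a)(f) = f(a)$, as a natural isomorphism $\id \Rightarrow (-)^{**}$ in $\Omega\text{-}\mathbf{Alg}_F^{\mathrm{fd}}$ (here I use $(\Omega^*)^* = \Omega$). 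Naturality and bijectivity are the classical vector-space facts; what needs verification is that $\mathrm{ev}_A$ intertwines $\omega_A$ with $((\omega^*)_{A^*})^*_{A^{**}}$, which unravels to the tautological identity $(\omega_A^{**}) \circ \mathrm{ev}_{A^{\otimes s(\omega)}} = \mathrm{ev}_{A^{\otimes t(\omega)}} \circ \omega_A$ once one checks the compatibility of $\mathrm{ev}$ with $\phi$ under iterated tensor powers.

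I expect the only delicate step to be the bookkeeping in this last verification: one must track the interplay between $\mathrm{ev}_{A^{\otimes n}}$ and the composition $\phi_n^* \circ \phi_n^{\mathrm{op}}$ relating $A^{\otimes n} \to (A^{\otimes n})^{**}$ to $(A^{**})^{\otimes n}$, using that all relevant $\phi$'s are bijections in the finite-dimensional setting. Everything else is a routine transcription of the monoidal behavior of $(-)^*$ between arrow-reversed categories.
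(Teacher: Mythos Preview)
The paper states this proposition without proof; it is evidently regarded as a routine extension of the classical algebra/coalgebra duality, so there is nothing to compare your argument against directly.

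Your proof plan is correct and is exactly the standard approach one would expect the authors to have in mind: transport the dual map $\omega_A^*$ along the canonical monoidal comparison maps $\phi_n \colon (A^*)^{\otimes n} \to (A^{\otimes n})^*$, using that $\phi_{s(\omega)}$ is invertible either because $s(\omega)\in\{0,1\}$ (part (i)) or because $A$ is finite dimensional (part (ii)). The naturality identity $\phi_n\circ (f^*)^{\otimes n}=(f^{\otimes n})^*\circ\phi_n$ is the right tool for functoriality, and the evaluation isomorphism $\mathrm{ev}_A$ gives the equivalence in the finite-dimensional case once you observe $(\Omega^*)^*=\Omega$. The bookkeeping you flag in the last paragraph is genuine but entirely straightforward; there is no hidden obstruction.
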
    

\begin{remark} One can define the notion of a variety of $\Omega$-algebras in the following natural way. Let a \textit{derived operation} be a formal linear combination of compositions of some of $\omega \in \Omega$ together, possibly, with some flips of components (of course, only linear combinations of compositions of the same arity and coarity are allowed). Let $\tilde\Omega$ be the set of all derived operations induced by $\Omega$
and let $P\subseteq \tilde\Omega$ be a subset.
Then the \textit{variety defined by the set $P$ of polynomial identities} is the class of all $\Omega$-algebras where the symbols from $P$ correspond to zero maps.
All varieties of ordinary algebras (in particular, Lie, associative commutative) as well as the classes
of counital coassociative coalgebras, bialgebras and Hopf algebras are varieties in this sense.
The results of this article however do not depend on the variety to which a given $\Omega$-algebra belongs and hence are applicable to all cases.
\end{remark}

\subsection{Measurings}\label{SectionMeasurings}

Let us start by making the following observation
\begin{proposition}
Let $P$ be a coalgebra. Then the category $\Act_P$ is a monoidal category such that both forgetful functors $U_1,U_2\colon\Act_P\to \Vect_F$ defined on objects by $U_1(A,B,\psi)=A$ and $U_2(A,B,\psi)=B$ are strict monoidal.
\end{proposition}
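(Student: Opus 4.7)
The plan is to equip $\Act_P$ with a monoidal structure whose underlying vector space data literally coincides with the tensor product in $\Vect_F$, so that the strict monoidality of $U_1$ and $U_2$ is automatic; the coalgebra structure on $P$ is used only to manufacture the correct action $\psi$ on the tensor product from those of the factors, via the comultiplication and counit of $P$.

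Concretely, I would define on objects
$$(A,B,\psi) \otimes (A',B',\psi') := (A \otimes A', B \otimes B', \psi \boxtimes \psi'),$$
where $\psi \boxtimes \psi' \colon P \otimes A \otimes A' \to B \otimes B'$ is the composite
$$P \otimes A \otimes A' \xrightarrow{\Delta_P \ot \id \ot \id} P \otimes P \otimes A \otimes A' \xrightarrow{\id \ot \tau \ot \id} P \otimes A \otimes P \otimes A' \xrightarrow{\psi \ot \psi'} B \otimes B',$$
i.e.\ in Sweedler notation $(\psi \boxtimes \psi')(p \otimes a \otimes a') = \psi(p_{(1)} \otimes a) \otimes \psi'(p_{(2)} \otimes a')$. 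On morphisms I set $(f,g) \otimes (f',g') := (f \otimes f', g \otimes g')$; compatibility with the new actions follows at once from naturality of $\ot$ in $\Vect_F$ together with the defining intertwining relations for $(f,g)$ and $(f',g')$. The unit object is $\mathbb{I} := (F, F, \varepsilon_P)$, regarding $\varepsilon_P$ as the map $P \otimes F \to F$, $p \otimes \alpha \mapsto \varepsilon_P(p)\alpha$.

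Next I would check that the associator and unitors of $\Vect_F$ lift to morphisms in $\Act_P$. For associativity, both $(\psi \boxtimes \psi') \boxtimes \psi''$ and $\psi \boxtimes (\psi' \boxtimes \psi'')$ send $p \otimes a \otimes a' \otimes a''$ to $\psi(p_{(1)} \otimes a) \otimes \psi'(p_{(2)} \otimes a') \otimes \psi''(p_{(3)} \otimes a'')$ (with brackets reshuffled by the $\Vect_F$-associator), and the coincidence of the two Sweedler expressions is exactly coassociativity of $\Delta_P$. For the left unit, $(\varepsilon_P \boxtimes \psi)(p \otimes 1 \otimes a) = \varepsilon_P(p_{(1)}) \otimes \psi(p_{(2)} \otimes a)$, which collapses to $\psi(p \otimes a)$ under $F \otimes B \cong B$ by the counit axiom; the right-hand unitor is symmetric.

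With the associator and unitors inherited from $\Vect_F$, the pentagon and triangle axioms in $\Act_P$ reduce to the corresponding axioms in $\Vect_F$, which already hold. Strict monoidality of $U_1$ and $U_2$ is then immediate: both functors forget $\psi$ and send $\otimes$, $\mathbb I$, and the coherence isomorphisms of $\Act_P$ on the nose to $\otimes$, $F$, and the coherence isomorphisms of $\Vect_F$. The only obstacle is the routine bookkeeping with Sweedler sums and the identifications $F \otimes V \cong V \cong V \otimes F$; the substantive content reduces entirely to the coassociativity and counitality axioms for $P$, which is exactly where one expects them to intervene.
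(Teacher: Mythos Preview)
Your proof is correct and follows exactly the same approach as the paper: define the tensor product via $\psi''(p\ot a\ot a')=\psi(p_{(1)}\ot a)\ot \psi'(p_{(2)}\ot a')$ and the unit as $(F,F,\varepsilon_P)$. The paper merely states these definitions and leaves the verification of the monoidal axioms to the reader, whereas you have filled in the details (associator and unitors lifting from $\Vect_F$, coassociativity/counitality yielding the coherence, and the observation that strict monoidality of $U_1$, $U_2$ is automatic).
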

\begin{proof}
Given two objects $(A,B,\psi)$ and $(A',B',\psi')$ in $\Act_P$, we define the tensor product as $(A\ot A',B\ot B',\psi'')$, where we consider:  
$$\psi''(p\ot a\ot a')=\psi(p_{(1)}\ot a)\ot \psi'(p_{(2)}\ot a').$$
The monoidal unit is given by $(F,F,\varepsilon)$ where $\varepsilon$ is the counit of coalgebra $P$, regarded as a map $P\cong P\ot F\to F$. 
One can easily verify that this indeed endows $\Act_P$ with the structure of a monoidal category.
\end{proof}

For any $(A,B,\psi)$ in $\Act_P$ we can in particular consider the $n$-fold tensor product of this object with itself for any $n\in \mathbb N$. We will denote this by $(A,B,\psi)^{\ot n}=(A^{\ot n},B^{\ot n},\psi_n)$, where we have (by the above)
$$\psi_n(p\otimes a_1 \otimes \dots \otimes a_n):= \psi(p_{(1)} \otimes a_1) \otimes \dots \otimes \psi(p_{(n)}\otimes a_n)$$
for  all $a_i \in A$, $p\in P$, $n>0$ and $\psi_0(p)=\varepsilon_P(p)$.

We now come to one of the main definitions of this paper.

\begin{definition}
A linear map $\psi\colon P\ot A\to B$, where $A$ and $B$ are $\Omega$-algebras and $P$ is a coalgebra, is called a {\em measuring} from $A$ to $B$ if and only if $(A,B,\psi)$ is an $\Omega$-algebra in the monoidal category $\Act_P$. Spelled out, $\psi$ is a measuring if and only if for all $\omega\in\Omega$ 
we have $$\psi_{t(\omega)}(p \otimes \omega_A(a_1 \otimes \dots \otimes a_{s(\omega)}))
=\omega_B(\psi_{s(\omega)}(p \otimes a_1 \otimes \dots \otimes a_{s(\omega)}))$$
for all $a_i \in A$, $p\in P$.
\end{definition}

\begin{examples} 
\begin{enumerate}
\item If $\Omega = \lbrace \mu, u \rbrace$
and $A$, $B$ are unital algebras, we get the usual definition~\cite[Chapter VII]{Sweedler} of a measuring from $A$ to $B$:
$$\psi(p \otimes ab)= \psi(p_{(1)} \otimes a)\psi(p_{(2)}\otimes b) 
\text{ and }\psi(p\otimes 1_A)= \varepsilon(p)1_B$$
for $a,b \in A$ and $p\in P$.
\item  If $A$ and $B$ are coalgebras, then a measuring from $A$ to $B$ is just a coalgebra homomorphism $P \otimes A \to B$.
\end{enumerate}
\end{examples}

\begin{remark}
If $(A,B,\psi)$ is in $\mathbf{Act}_P$, we know that there is an associated morphism $P\to \Vect(A,B)$. Under this correspondence, one easily sees that if $\psi$ is a measuring then grouplike elements of $P$ correspond to morphisms of $\Omega$-algebras. In the same way, primitive elements in $P$ could be interpreted as a kind of generalized derivations. 
\end{remark}

The following result shows that measurings can transport interesting information between $\Omega$-algebras.

\begin{proposition}
Let $A$ and $B$ be $\{\mu\}$-algebras (i.e. vector spaces with a binary operation). If $B$ is associative and there exists a measuring $\psi\colon P\ot A\to B$ such that $\psi$ is injective, then $A$ is associative as well.
\end{proposition}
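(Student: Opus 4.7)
The plan is to use associativity of $B$ together with coassociativity of $P$ to show that for any $a,b,c\in A$ the associator $x:=(ab)c-a(bc)$ is annihilated by $\psi(p\otimes(-))$ for every $p\in P$; the injectivity of $\psi$ will then immediately force $x=0$.

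Concretely, fix arbitrary $a,b,c\in A$ and $p\in P$ and apply the measuring axiom twice. Writing $\Delta_P(p)=p_{(1)}\otimes p_{(2)}$, and using coassociativity of $P$ to legitimize the triple Sweedler notation $p_{(1)}\otimes p_{(2)}\otimes p_{(3)}$, I compute on the one hand
$$\psi\bigl(p\otimes(ab)c\bigr)=\psi\bigl(p_{(1)}\otimes(ab)\bigr)\,\psi(p_{(2)}\otimes c)=\bigl(\psi(p_{(1)}\otimes a)\,\psi(p_{(2)}\otimes b)\bigr)\,\psi(p_{(3)}\otimes c),$$
and on the other hand, in the same way,
$$\psi\bigl(p\otimes a(bc)\bigr)=\psi(p_{(1)}\otimes a)\,\bigl(\psi(p_{(2)}\otimes b)\,\psi(p_{(3)}\otimes c)\bigr).$$
Since $B$ is associative the two right-hand sides coincide, whence $\psi(p\otimes x)=0$ for every $p\in P$.

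To conclude I invoke injectivity of $\psi$. The case $P=0$ is degenerate (the hypothesis imposes no condition on $A$ at all), so assume $P\neq 0$ and pick any nonzero $p_{0}\in P$. Then $\psi(p_{0}\otimes x)=0$, so by injectivity of $\psi\colon P\otimes A\to B$ we have $p_{0}\otimes x=0$ in $P\otimes A$; since $p_{0}\neq 0$ and we are working with a tensor product of vector spaces, this forces $x=0$. Hence $(ab)c=a(bc)$ for all $a,b,c\in A$, so $A$ is associative. The argument is essentially a direct transport of the associativity identity from $B$ to $A$ along $\psi$; the only small subtlety is the correct use of coassociativity of $\Delta_{P}$ to make the triple Sweedler notation unambiguous, and the elementary observation that $p_{0}\otimes x=0$ with $p_{0}\neq 0$ implies $x=0$.
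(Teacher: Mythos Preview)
Your proof is correct and follows exactly the approach the paper has in mind: the paper's proof is the single sentence ``It suffices to see that $\psi(p\otimes a(bc))=\psi(p\otimes (ab)c)$ for all $p\in P$ and $a,b,c\in A$,'' and you have simply carried out this computation in full, using the measuring identity twice together with coassociativity of $P$ and associativity of $B$. Your explicit remark about the degenerate case $P=0$ is a small improvement over the paper, which tacitly ignores it.
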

\begin{proof}
It suffices to see that $\psi(p\ot a(bc))=\psi(p\ot (ab)c)$ for all $p\in P$ and $a,b,c\in A$. 
\end{proof}

Fix $\Omega$-algebras $A$ and $B$ and a subspace $V\subseteq \mathbf{Vect}_F(A,B)$. Consider the category $\mathbf{Meas}(A,B,V)$ where the objects are pairs $(P,\psi)$ consisting of a coalgebra $P$ and a measuring $\psi \colon P \otimes A \to B$  such that $\cosupp \psi \subseteq V$. A morphism from $\psi_1 \colon P_1 \otimes A \to B$
to $\psi_2 \colon P_2 \otimes A \to B$ in $\mathbf{Meas}(A,B,V)$ is a coalgebra homomorphism $\varphi \colon P_1 \to P_2$
making the following diagram commutative:
$$\xymatrix{ P_1 \otimes A \ar[r]^(0.6){\psi_1} \ar[d]_{\varphi \otimes \id_A} & B \\
P_2 \otimes A  \ar[ru]_{\psi_2}  } $$

\begin{theorem}\label{TheoremUnivMeasExistence} In $\mathbf{Meas}(A,B,V)$ there exists a terminal object, that we denote by $({}_\square \mathcal{C}(A,B,V),\psi_{A,B,V})$ and that we call the \textit{$V$-universal measuring coalgebra} from $A$ to $B$.
\end{theorem}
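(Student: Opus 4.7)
The plan is to realize ${}_\square \mathcal{C}(A,B,V)$ as the largest subcoalgebra of the cofree coalgebra on $V$ on which the canonical evaluation map gives a measuring. Recall (Sweedler, Block--Leroux) that for any vector space $V$ there exists a cofree coalgebra $\mathcal{C}(V)$ together with a linear map $\pi\colon \mathcal{C}(V)\to V$ such that for every coalgebra $D$ and every linear map $f\colon D\to V$ there is a unique coalgebra morphism $\tilde f\colon D\to \mathcal{C}(V)$ with $\pi\circ\tilde f=f$. I will take this classical construction for granted, regarding $V\subseteq \Vect_F(A,B)$ so that $\pi$ corresponds, via the Hom-tensor adjunction, to a linear map $\hat\pi\colon \mathcal{C}(V)\ot A\to B$ with $\cosupp\hat\pi\subseteq V$.

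The core construction is as follows. Let $\mathcal F$ denote the family of subcoalgebras $D\subseteq \mathcal{C}(V)$ for which $\hat\pi|_{D\ot A}\colon D\ot A\to B$ is a measuring from $A$ to $B$. For each $\omega\in\Omega$ and each tuple $a_1,\dots,a_{s(\omega)}\in A$, the measuring axiom
$$\hat\pi_{t(\omega)}\bigl(p\ot \omega_A(a_1\ot\cdots\ot a_{s(\omega)})\bigr)=\omega_B\bigl(\hat\pi_{s(\omega)}(p\ot a_1\ot\cdots\ot a_{s(\omega)})\bigr)$$
is an $F$-linear condition on $p$ depending only on its iterated coproduct $\Delta^{(n-1)}(p)$. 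If $D_1,D_2\in\mathcal F$ and $p=p_1+p_2$ with $p_i\in D_i$, then $\Delta^{(n-1)}(p)\in D_1^{\ot n}+D_2^{\ot n}$, so by $F$-linearity the axiom persists on $D_1+D_2$; the same argument extends to arbitrary sums of subcoalgebras. I can therefore define $${}_\square \mathcal{C}(A,B,V):=\sum_{D\in\mathcal F}D,\qquad \psi_{A,B,V}:=\hat\pi\bigr|_{{}_\square \mathcal{C}(A,B,V)\ot A},$$ which by construction is a measuring with $\cosupp\psi_{A,B,V}\subseteq V$.

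For the terminal property, let $(P,\psi)\in\mathbf{Meas}(A,B,V)$. Since $\cosupp\psi\subseteq V$, the associated map $\ol\psi\colon P\to \Vect_F(A,B)$ factors through $V$, and the cofree property yields a unique coalgebra morphism $\tilde\psi\colon P\to \mathcal{C}(V)$ satisfying $\pi\circ\tilde\psi=\ol\psi$, equivalently $\hat\pi\circ(\tilde\psi\ot\id_A)=\psi$. Iterating $\Delta\circ\tilde\psi=(\tilde\psi\ot\tilde\psi)\circ\Delta$ gives $\hat\pi_n\circ(\tilde\psi\ot\id_{A^{\ot n}})=\psi_n$ for every $n$, so the measuring identities for $\psi$ transfer to $\hat\pi$ on all of $\tilde\psi(P)$. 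Hence $\tilde\psi(P)$ is a subcoalgebra of $\mathcal{C}(V)$ lying in $\mathcal F$, so $\tilde\psi(P)\subseteq {}_\square\mathcal{C}(A,B,V)$ by maximality. This provides the desired morphism $P\to {}_\square\mathcal{C}(A,B,V)$ in $\mathbf{Meas}(A,B,V)$; its uniqueness is inherited from the uniqueness clause in the cofree property of $\mathcal{C}(V)$.

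I expect the main obstacle to be conceptual rather than computational: one must invoke the (non-trivial) existence of the cofree coalgebra on an arbitrary vector space, and one must verify carefully that the measuring condition — which couples iterated coproducts of $D$ with the full $\Omega$-algebra structure of $A$ and $B$ — is stable under taking sums of subcoalgebras. Once these two ingredients are in hand, the universal object emerges as the maximal subcoalgebra of $\mathcal{C}(V)$ carrying the measuring structure, a template that can likely be reused when constructing the other universal objects later in the paper.
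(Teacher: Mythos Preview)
Your proof is correct and follows exactly the same approach as the paper: realize ${}_\square\mathcal{C}(A,B,V)$ as the sum of all subcoalgebras of the cofree coalgebra $K(V)$ on which the counit $K(V)\to V\hookrightarrow\Vect_F(A,B)$ defines a measuring, and deduce terminality from the universal property of $K(V)$. The paper's version is considerably terser --- it neither spells out why the measuring condition is preserved under sums of subcoalgebras nor the verification that $\tilde\psi(P)$ lands inside the maximal subcoalgebra --- so your write-up is effectively a fleshed-out version of the same argument.
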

\begin{proof}
Denote by $K$ the right adjoint functor for the forgetful functor $\mathbf{Coalg}_F \to \mathbf{Vect}_F$.
In other words, for a given vector space $W$ the coalgebra $K(W)$ is the \textit{cofree coalgebra} of $W$.
Denote by ${}_\square \mathcal{C}(A,B,V)$ the sum of all subcoalgebras $C$ of $K(V)$ such that
the composition of the embedding $C \subseteq K(V)$, the counit $K(V)\to V$ of the adjunction
and the embedding $V \subseteq \mathbf{Vect}_F(A,B)$
defines a measuring. Then, by the universal property of $K(V)$, the map $\psi_{A,B,V} \colon {}_\square \mathcal{C}(A,B,V) \otimes A \to B$
corresponding to the composition of the embedding $ {}_\square\mathcal{C}(A,B,V) \subseteq K(V)$, the counit $K(V)\to V$ of the adjunction and the embedding $V \subseteq \mathbf{Vect}_F(A,B)$ is the terminal object of 
$\mathbf{Meas}(A,B,V)$ and ${}_\square \mathcal{C}(A,B,V)$ is the $V$-universal measuring coalgebra from $A$ to $B$. 
\end{proof}

\begin{remark} It is easy to see that the cosupport of the universal measuring $\psi_{A,B,V}$
is a subspace of $V$ and coincides with $V$ if there exists a measuring from $A$ to $B$ with the cosupport equal to $V$. 
\end{remark}

\begin{example}
If $A$ and $B$ are unital associative algebras, the coalgebra ${}_\square\mathcal{C}(A,B,\mathbf{Vect}_F(A,B))$ is exactly the Sweedler universal measuring coalgebra~\cite[Chapter VII]{Sweedler}.
\end{example}

\subsection{Comeasurings}\label{SectionComeasurings}

Dually to what we did in the previous subsection, one can see that for any unital associative algebra $Q$, the category  $\CoAct_Q$ is a monoidal category such that both forgetful functors to $\Vect$ are strict monoidal. If an object object $(A,B,\rho)$ in this monoidal category has an $\Omega$-algebra structure, then we say that $Q$ comeasures from $A$ to $B$. Let us spell out this definition in detail.

Let $A$ and $B$ be $\Omega$-algebras and let $Q$ be a unital associative algebra.
Suppose $\rho \colon A \to B \otimes Q$ is a linear map and denote $\rho(a)=a_{(0)}\ot a_{(1)}$, for any $a\in A$.
Then for $n\in\mathbb Z_+$ we have induced maps $\rho_n \colon A^{\otimes n} \to B^{\otimes n} \otimes Q$
defined as follows: $\rho_0(\alpha)=\alpha 1_Q$ for $\alpha \in F$,
and $$\rho_n(a_1 \otimes \dots \otimes a_n):= (a_1)_{(0)} \otimes \dots \otimes (a_{n})_{(0)}
\otimes (a_1)_{(1)}  \dots  (a_{n})_{(1)}$$
for all $a_i \in A$.

\begin{definition}
 A linear map $\rho \colon A \to B \otimes Q$ is called a \textit{comeasuring} from $A$ to $B$ if it preserves the operations from $\Omega$, i.e. for every  $\omega \in \Omega$,
$a_i \in A$
we have $$\rho_{t(\omega)} \left( \omega_A(a_1 \otimes \dots \otimes a_{s(\omega)}) \right)
= (\omega_B \otimes \id_Q)\rho_{s(\omega)}(a_1 \otimes \dots \otimes a_{s(\omega)}).$$
\end{definition}

 \begin{example} In the case $\Omega = \lbrace \mu, u \rbrace$ and $A$, $B$ are unital algebras we get the following definition of a comeasuring from $A$ to $B$:
 
\hspace{1.7cm}$\rho(1_A)= 1_B \otimes 1_Q$ and $\rho(ab)= a_{(0)}b_{(0)}\otimes a_{(1)}b_{(1)}$  for all $a,b \in A$.\\
In other words, $\rho$ is an algebra homomorphism.
\end{example}

\begin{remark} The restriction of $\rho^{\vee}$ (see the definition in Section~\ref{SectionLinearMaps})
to $Q^\circ \otimes A$, where the coalgebra $Q^\circ$ is the finite dual of the algebra $Q$, is a measuring.
If $Q$ is finite dimensional, then $\rho^{\vee}$ itself is a measuring and $\cosupp \rho = \cosupp \rho^{\vee}$.
\end{remark}

Fix $\Omega$-algebras $A$ and $B$ and a subspace $V\subseteq \mathbf{Vect}_F(A,B)$. Consider the category $\mathbf{Comeas}(A,B,V)$ where the objects are all comeasurings $\rho \colon A \to B \otimes Q$ for all unital associative algebras $Q$ with $\cosupp \rho \subseteq V$ and the morphisms from $\rho_1 \colon A \to B \otimes Q_1$
to $\rho_2 \colon A \to B \otimes Q_2$  are unital algebra homomorphisms $\varphi \colon Q_1 \to Q_2$
making the diagram below commutative:

$$\xymatrix{ A \ar[r]^(0.4){\rho_1} 
\ar[rd]_{\rho_2}
 & B \otimes Q_1 \ar[d]^{\id_B \otimes \varphi} \\
& B \otimes Q_2} $$

Let us call the algebra $\mathcal{A}^\square(A,B,V)$ corresponding to the initial object in $\mathbf{Comeas}(A,B,V)$ (if it exists) the \textit{$V$-universal comeasuring algebra} from $A$ to $B$.

\begin{theorem}\label{TheoremUnivComeasExistence}  If $V$ is pointwise finite dimensional and closed in the finite topology, there exists an initial object in $\mathbf{Comeas}(A,B,V)$.
\end{theorem}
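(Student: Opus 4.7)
My plan is to reduce to a free construction, using Theorem~\ref{TheoremVectDualization} to first obtain a ``universal linear model'' of $V$, and then freely generating an algebra on top of it while quotienting by the comeasuring relations.

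\textbf{Setup.} By Theorem~\ref{TheoremVectDualization}, the hypotheses on $V$ give a linear map $\rho_0 \colon A \to B \otimes V^*$ with $\cosupp \rho_0 = V$. Replacing the codomain $V^*$ with the intrinsic subspace $W := \supp \rho_0$, I view this as a linear map $\rho_0 \colon A \to B \otimes W$ which is still ``universal'' among linear maps with cosupport inside $V$: by Lemma~\ref{lemmasup} and Proposition~\ref{PropositionPrecLinearMapComodCriterion}, every linear $\tilde\rho \colon A \to B \otimes Q$ with $\cosupp \tilde\rho \subseteq V$ factors uniquely as $(\id_B \otimes \tau)\rho_0$ for a linear map $\tau \colon W \to Q$.

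\textbf{Construction.} Form the free unital associative algebra $T(W)$ and compose $\rho_0$ with the inclusion $W \hookrightarrow T(W)$ to obtain $\tilde\rho_0 \colon A \to B \otimes T(W)$, with associated $(\tilde\rho_0)_n \colon A^{\otimes n} \to B^{\otimes n}\otimes T(W)$. For every $\omega\in\Omega$ and every tuple $\vec a = (a_1,\ldots,a_{s(\omega)})\in A^{s(\omega)}$, consider the defect
\[
x_{\omega,\vec a} := (\tilde\rho_0)_{t(\omega)}\bigl(\omega_A(a_1\otimes\cdots\otimes a_{s(\omega)})\bigr) - (\omega_B\otimes\id_{T(W)})(\tilde\rho_0)_{s(\omega)}(a_1\otimes\cdots\otimes a_{s(\omega)})
\]
lying in $B^{\otimes t(\omega)}\otimes T(W)$. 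Fixing a basis of $B^{\otimes t(\omega)}$ and writing $x_{\omega,\vec a}=\sum_i b_i\otimes t_i^{\omega,\vec a}$, let $I\subseteq T(W)$ be the two-sided ideal generated by all the elements $t_i^{\omega,\vec a}$ as $\omega$ and $\vec a$ vary. (A routine check shows $I$ is independent of the choice of basis.) Set $\mathcal A^\square(A,B,V) := T(W)/I$, and let $\rho \colon A \to B\otimes \mathcal A^\square(A,B,V)$ be the composition of $\tilde\rho_0$ with the projection.

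\textbf{Verification.} By the very definition of $I$, the induced map $\rho$ satisfies all the comeasuring axioms, so $\rho$ is a comeasuring. To see $\cosupp\rho\subseteq V$, note that $\rho^\vee$ factors as $\tilde\rho_0^\vee\circ(\pi^*\otimes\id)$, where $\pi\colon T(W)\to \mathcal A^\square(A,B,V)$ is the quotient map, while $\tilde\rho_0^\vee$ factors through $\rho_0^\vee$ via the (surjective) restriction $T(W)^*\twoheadrightarrow W^*$; therefore $\cosupp\rho \subseteq \rho_0^\vee(W^*\otimes(-)) = \cosupp\rho_0 = V$. Hence $(\mathcal A^\square(A,B,V),\rho)\in \mathbf{Comeas}(A,B,V)$.

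\textbf{Universal property.} Given any $(Q,\rho')\in\mathbf{Comeas}(A,B,V)$, the inclusion $\cosupp\rho'\subseteq V=\cosupp\rho_0$ together with Proposition~\ref{PropositionPrecLinearMapComodCriterion} yields a unique linear map $\tau\colon W\to Q$ with $(\id_B\otimes\tau)\rho_0=\rho'$. By the universal property of $T(W)$, this extends uniquely to a unital algebra homomorphism $\tilde\tau\colon T(W)\to Q$. Since $\tilde\tau$ is an algebra map, one checks $(\id\otimes\tilde\tau)(\tilde\rho_0)_n = \rho'_n$ for every $n$; applying $\id\otimes\tilde\tau$ to $x_{\omega,\vec a}$ then gives $0$ because $\rho'$ is a comeasuring, forcing $\tilde\tau(t_i^{\omega,\vec a})=0$ and hence $\tilde\tau(I)=0$. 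Thus $\tilde\tau$ descends to the desired $\varphi\colon \mathcal A^\square(A,B,V)\to Q$; uniqueness of $\varphi$ follows because any such $\varphi$ is determined on $W$ by the uniqueness clause of Proposition~\ref{PropositionPrecLinearMapComodCriterion}, and hence on all of $\mathcal A^\square(A,B,V)$ by the generation property of $W$.

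\textbf{Main obstacle.} The delicate point is the infinite-dimensional setting, where $V\not\cong V^{**}$ in general and so one cannot simply use $V^*$ as the space of generators without introducing non-unique extensions of $\tau$. The pointwise finite dimensionality and closedness of $V$ (via Theorem~\ref{TheoremVectDualization}) are exactly what allow the replacement of $V^*$ by the intrinsic minimal support $W=\supp\rho_0$, restoring the required uniqueness and ensuring that the cosupport of the quotient stays inside $V$.
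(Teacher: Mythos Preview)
Your proof is correct and follows essentially the same route as the paper: invoke Theorem~\ref{TheoremVectDualization} to obtain $\rho_0$, pass to $W=\supp\rho_0$, form the tensor algebra $T(W)$, quotient by the comeasuring relations, and use Proposition~\ref{PropositionPrecLinearMapComodCriterion} for the universal factorization. The only cosmetic differences are that you phrase the generators of $I$ via basis-free ``defect'' elements whereas the paper writes them out in structure constants, and you verify $\cosupp\rho\subseteq V$ explicitly while the paper defers this to a remark.
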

\begin{proof} By Theorem~\ref{TheoremVectDualization}, there exists a vector space
$W$ and a linear map $\rho_0 \colon A \to B \otimes W$ such that $\rho^{\vee}_0(W^* \otimes (-)) = V$.

Let $(a_\alpha)_\alpha$ be a basis in $A$ and let $(b_\beta)_\beta$ be a basis in $B$.
Define $p_{\beta\alpha} \in W$ by $$\rho_0(a_\alpha)=\sum_{\beta} b_\beta \otimes p_{\beta\alpha}.$$
Let $W_0 := \supp \rho_0=\langle p_{\beta\alpha}  \mid \alpha, \beta \rangle_F$.

 For every $\omega \in \Omega$ define $a^{\beta_1,\ldots,\beta_{t(\omega)}}_{\omega;\alpha_1,\ldots,\alpha_{s(\omega)}}, b^{\beta_1,\ldots,\beta_{t(\omega)}}_{\omega;\alpha_1,\ldots,\alpha_{s(\omega)}}\in F$
by
$$\omega_A(a_{\alpha_1} \otimes \dots \otimes a_{\alpha_{s(\omega)}})
= \sum_{\beta_1,\ldots,\beta_{t(\omega)}} a^{\beta_1,\ldots,\beta_{t(\omega)}}_{\omega;\alpha_1,\ldots,\alpha_{s(\omega)}}a_{\beta_1} \otimes \dots \otimes a_{\beta_{t(\omega)}}$$
and 
$$\omega_B(b_{\alpha_1} \otimes \dots \otimes b_{\alpha_{s(\omega)}})
= \sum_{\beta_1,\ldots,\beta_{t(\omega)}} b^{\omega;\beta_1,\ldots,\beta_{t(\omega)}}_{\alpha_1,\ldots,\alpha_{s(\omega)}}b_{\beta_1} \otimes \dots \otimes b_{\beta_{t(\omega)}}.$$

In other words, $a^{\beta_1,\ldots,\beta_{t(\omega)}}_{\omega;\alpha_1,\ldots,\alpha_{s(\omega)}}$
and $b^{\beta_1,\ldots,\beta_{t(\omega)}}_{\omega;\alpha_1,\ldots,\alpha_{s(\omega)}}$
are the structure constants of the $\Omega$-algebras $A$ and $B$, respectively.

Denote by $T$ the left adjoint functor for the forgetful functor $\mathbf{Alg}_F \to \mathbf{Vect}_F$.
In other words, for a given vector space $M$ the algebra $T(M)$ is the \textit{free (or tensor) algebra} of $M$. Let $i_M \colon M \to T(M)$ be the unit of this adjunction. Denote by $I$ the ideal of $T(W_0)$ generated by the elements
\begin{equation*}\begin{split} \sum_{\gamma_1,\ldots,\gamma_{t(\omega)}} a^{\gamma_1,\ldots,\gamma_{t(\omega)}}_{\omega;\alpha_1,\ldots,\alpha_{s(\omega)}}
i_{W_0}(p_{\beta_1 \gamma_1})i_{W_0}(p_{\beta_2 \gamma_2})
\dots i_{W_0}(p_{\beta_{t(\omega)} \gamma_{t(\omega)}}) \\
- \sum_{\mu_1,\ldots,\mu_{s(\omega)}}
b^{\beta_1,\ldots,\beta_{t(\omega)}}_{\omega;\mu_1,\ldots,\mu_{s(\omega)}}
i_{W_0}(p_{\mu_1 \alpha_1})i_{W_0}(p_{\mu_2 \alpha_2})\dots i_{W_0}(p_{\mu_{s(\omega)} \alpha_{s(\omega)}})\end{split}\end{equation*}
 for all $\omega \in \Omega$ and $\alpha_1,\ldots,\alpha_{s(\omega)}$, $\beta_1,\ldots,\beta_{t(\omega)}$ and $\omega \in \Omega$.

Let $ \mathcal{A}^\square(A,B,V):=T(W_0)/I$ and let
$\rho_{A,B,V} \colon A \to B
\otimes \mathcal{A}^\square(A,B,V)$ be the composition of the corestriction of $\rho_0$
to $B \otimes W_0$ and the maps $i_{W_0} \colon W_0 \to T(W_0)$ and $T(W_0)\twoheadrightarrow T(W_0)/I$ tensored by $\id_B$.
Then the choice of $I$ implies that $\rho_{A,B,V}$ is a comeasuring.
Suppose $\rho \colon A \to B \otimes Q$ is another comeasuring with $\cosupp \rho \subseteq V$
where $Q$ is a unital associative algebra. Then Proposition~\ref{PropositionPrecLinearMapComodCriterion} implies
that there exists a linear map $\tau \colon W_0 \to Q$ making the following
diagram commutative:
\begin{equation*} \xymatrix{ A \ar[r] 
\ar[rd]_\rho
 & B \otimes W_0 \ar@{-->}[d]^{\id_B \otimes \tau} \\
& B \otimes Q} \end{equation*}

As $Q$ is a unital associative algebra, there exists an algebra homomorphism
$\tau_1 \colon T(W_0) \to Q$ making the diagram 

\begin{equation*} \xymatrix{ A \ar[r] 
\ar[rd]_\rho
 & B \otimes T(W_0) \ar@{-->}[d]^{\id_B \otimes \tau_1} \\
& B \otimes Q} \end{equation*} commutative.

Since $\rho$ is a comeasuring we have $I \subseteq \Ker(\tau_1)$ and
 there exists an algebra homomorphism
$\bar\tau_1 \colon \mathcal{A}^\square(A,B,V) \to Q$
making the diagram
\begin{equation*} \xymatrix{ A \ar[rr]^(0.3){\rho_{A,B,V}}
\ar[rrd]_\rho
 & & B \otimes \mathcal{A}^\square(A,B,V) \ar@{-->}[d]^{\id_B \otimes \bar\tau_1} \\
& & B \otimes Q} \end{equation*}
 commutative.

Now recall that $\mathcal{A}^\square(A,B,V)$ is generated by the images of $p_{\beta\alpha}$. Therefore, 
the homomorphism $\bar \tau_1$ is unique and $\rho_{A,B,V}$ is an initial object in $\mathbf{Comeas}(A,B,V)$.
\end{proof}

\begin{remark} It is easy to see that the cosupport of $\rho_{A,B,V} \colon A \to B
\otimes \mathcal{A}^\square(A,B,V)$
is a subspace of $V$ (the cosupport could become a proper subspace of $V$ after the factorization by $I$) and coincides with $V$ if there exists a comeasuring from $A$ to $B$ with the cosupport equal to $V$. 
\end{remark}

\begin{examples}
\begin{enumerate}
\item If $A$ and $B$ are unital associative algebras with $B$ finite dimensional then $\mathbf{Vect}_F(A,B)$ is obviously pointwise finite dimensional and closed in the finite topology and $\mathcal{A}^\square(A,B,\mathbf{Vect}_F(A,B))$ coincides with the Tambara universal coacting algebra \cite[Section 1]{Tambara}. \\
\item Let $A$ and $B$ be unital associative algebras such that $\mathcal{A}^\square(A,B, \mathbf{Vect}_F(A,B))$ exists. If, moreover, $A$ is a bialgebra and $B$ is commutative then $\mathcal{A}^\square(A,B, \mathbf{Vect}_F(A,B))$ is a bialgebra. Indeed, to start with, note that $\omega_{A,B,\mathbf{Vect}_F(A,B)} \colon A \to B \otimes \mathcal{A}^\square(A,B, \mathbf{Vect}_F(A,B)) \otimes \mathcal{A}^\square(A,B, \mathbf{Vect}_F(A,B))$ and $\lambda \colon A \to B \otimes F$ defined below are comeasurings:
\begin{eqnarray*}
&& \omega_{A,B,\mathbf{Vect}_F(A,B)} = (\mu_{B} \otimes \id \otimes \id) (\id \otimes \tau \otimes \id) (\rho_{A,B,\mathbf{Vect}_F(A,B)} \otimes \rho_{A,B,\mathbf{Vect}_F(A,B)}) \Delta_{A}\\
&& \lambda = {\rm can} \, u_{B} \, \varepsilon_{A}
\end{eqnarray*}
where ${\rm can} \colon B \to B \otimes F$ is the canonical isomorphism. The comultiplication $\Delta$ and counit $\varepsilon$ on $\mathcal{A}^\square(A,B, \mathbf{Vect}_F(A,B))$ are the unique algebra homomorphisms such that:
\begin{eqnarray*}
&& (\id \otimes \Delta) \rho_{A,B,\mathbf{Vect}_F(A,B)} = \omega_{A,B,\mathbf{Vect}_F(A,B)}\\
&& (\id \otimes \varepsilon)  \rho_{A,B,\mathbf{Vect}_F(A,B)} = \lambda
\end{eqnarray*}
It is straightforward to see that $\Delta$ and $\varepsilon$ define a bialgebra structure on $\mathcal{A}^\square(A,B, \mathbf{Vect}_F(A,B))$. A similar result was obtained in \cite[Section 3]{Mastnak} in the context of measurings.\\
\item Let $A$, $B$ be associative algebras, $Q$ a unital associative algebra, and $\rho \colon A \to B \otimes Q$ a comeasuring. Let $(a_\alpha)_\alpha$ be a basis in $A$, $(b_\beta)_\beta$ a basis in $B$ and define $q_{\beta\alpha} \in Q$ by $\rho(a_\alpha)=\sum_{\beta} b_\beta \otimes q_{\beta\alpha}$. Consider $I$ to be the ideal of the tensor algebra $T(\supp \rho)$ generated by 
$$
\sum_{r,s} c^\gamma_{rs}\, i_{\supp \rho}(q_{r\alpha})\, i_{\supp \rho}(q_{s\beta}) - \sum_{u} k^{u}_{\alpha\beta}\, i_{\supp \rho} (q_{\gamma u})
$$
for all possible choices of indices $\alpha,\beta,\gamma$, where $i_{\supp \rho} \colon \supp \rho \to T(\supp \rho)$ is the canonical inclusion and $k^{u}_{\alpha\beta} \in F$
and $c^\gamma_{rs} \in F$ denote the structure constants of $A$ and $B$, respectively. Then $\mathcal{A}^\square(A,B, \cosupp \rho) = T(\supp \rho)/I$ and $\rho_{A,B,V} = (\id_{B} \otimes\, \pi\,  i_{\supp \rho}) \rho$, where $\pi \colon T(\supp \rho) \to T(\supp \rho)/I$ is the canonical projection. 

Furthermore, it can be easily proved that $\bigl(\mathcal{A}^\square(A,B, \cosupp \rho), \rho_{A,B,V}\bigl)$ is the initial object in the category whose objects are all comeasurings equivalent to $\rho$ while the morphisms between two such objects $\rho_{1} \colon A \to B \otimes Q_{1}$ and $\rho_{2} \colon A \to B \otimes Q_{2}$ are all algebra homomorphisms $\tau \colon Q_{1} \to Q_{2}$ such that $(\id_{B} \otimes \tau) \rho_{1} = \rho_{2}$.
\end{enumerate}
\end{examples}

\subsection{Duality between measurings and comeasurings}\label{SectionDuality(Co)measurings}

In this section we establish the duality between ${}_\square \mathcal{C}(A,B,V)$ and 
$\mathcal{A}^\square(A,B,V)$. 

We first prove the following lemma:

\begin{lemma}\label{LemmaMeasComeasBijection}
Let $A$ and $B$ be $\Omega$-algebras and let $V \subseteq \mathbf{Vect}_F(A,B)$ be a pointwise finite dimensional subspace closed in the finite topology.
Denote by $\mathrm{Meas}(P,A,B,V)$
the set of measurings $\psi \colon P \otimes A \to B$  with $\cosupp \psi \subseteq V$
and by $\mathrm{Comeas}(P^*,A,B,V)$
the set of comeasurings $\rho \colon A \to B \otimes P^*$  with $\cosupp \rho \subseteq V$.
Then we have a bijection $\mathrm{Meas}(P,A,B,V) 
\cong \mathrm{Comeas}(P^*,A,B,V)$
natural in the coalgebra $P$ if we regard $\mathrm{Meas}(-,A,B,V)$ and $\mathrm{Comeas}((-)^*,A,B,V)$
as functors $\mathbf{Coalg}_F \to \mathbf{Sets}$.
\end{lemma}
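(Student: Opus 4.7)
The plan is to construct explicit inverse bijections and verify naturality. For the forward direction, suppose $\psi \in \mathrm{Meas}(P,A,B,V)$. For each $a \in A$, the linear map $\psi(-\otimes a)\colon P\to B$ has image inside $(\cosupp \psi)\cdot a \subseteq Va$, which is finite dimensional by hypothesis on $V$. Hence $\psi(-\otimes a)$ has finite rank and lies in the image of the canonical embedding $\phi\colon B\ot P^* \hookrightarrow \Vect_F(P,B)$, $\phi(b\ot p^*)(p) = p^*(p)b$. I will define $\rho(a) \in B\ot P^*$ to be the (unique) preimage of $\psi(-\ot a)$ under $\phi$; linearity of $\rho$ follows from the linearity of $\psi$ in the second variable. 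For the backward direction, given $\rho\in\mathrm{Comeas}(P^*,A,B,V)$, I will set $\psi(p\ot a):=a_{(1)}(p)a_{(0)}$ as in Lemma~\ref{LemmaLinearMap(Co)modClosure}; these two constructions are manifestly mutually inverse.

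The heart of the argument is that the (co)measuring conditions correspond to each other. For this I will introduce analogous embeddings $\phi^n\colon B^{\ot n}\ot P^* \hookrightarrow \Vect_F(P,B^{\ot n})$ and verify by direct computation the key identity
\[
\phi^n(\rho_n(a_1\ot\cdots\ot a_n))(p) \;=\; \psi_n(p\ot a_1\ot\cdots\ot a_n),
\]
which reduces, after unfolding $\rho_n$ and the convolution product in $P^*$, to the standard Sweedler computation $(a_1)_{(1)}\cdots (a_n)_{(1)})(p) = (a_1)_{(1)}(p_{(1)})\cdots (a_n)_{(1)}(p_{(n)})$. Combined with the obvious compatibility $\omega_B\circ\phi^{s(\omega)} = \phi^{t(\omega)}\circ(\omega_B\ot\id_{P^*})$ and injectivity of $\phi^{t(\omega)}$, this will show that the measuring identity for $\psi$ applied to $\omega\in\Omega$ is equivalent to the comeasuring identity for $\rho$. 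Concerning cosupports: the inclusion $\cosupp\psi\subseteq\cosupp\rho$ is immediate, while for the forward direction I invoke Lemma~\ref{LemmaLinearMap(Co)modClosure} to obtain $\cosupp\rho = \overline{\cosupp\psi}\subseteq\overline V = V$, crucially using that $V$ is closed in the finite topology.

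Finally, naturality in $P\in\mathbf{Coalg}_F$ follows because a coalgebra map $\varphi\colon P_1\to P_2$ dualizes to an algebra map $\varphi^*\colon P_2^*\to P_1^*$, and the constructions above transform the action $\psi_2\circ(\varphi\ot\id_A)$ into the coaction $(\id_B\ot\varphi^*)\circ\rho_2$, which is a direct verification on elements via $\phi$.

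The main obstacle I anticipate is the bookkeeping in the equivalence of the (co)measuring identities: one must carefully match the iterated comultiplication on $P$ (appearing in $\psi_n$) with the algebra product in $P^*$ (appearing in $\rho_n$) through the embeddings $\phi^n$. Once this lemma is in place, the remaining verifications are essentially formal.
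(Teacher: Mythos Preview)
Your proposal is correct and follows essentially the same approach as the paper: both use the finite-rank observation (that $\psi(-\otimes a)$ lands in $Va$, hence in the image of $B\otimes P^*\hookrightarrow\Vect_F(P,B)$) to construct $\rho$ from $\psi$, and both invoke Lemma~\ref{LemmaLinearMap(Co)modClosure} together with closedness of $V$ to control $\cosupp\rho$. Your write-up is in fact more thorough than the paper's, which leaves the equivalence of the (co)measuring identities and the naturality statement to the reader; your explicit use of the maps $\phi^n$ and the identity $\phi^n(\rho_n(-))(p)=\psi_n(p\otimes -)$ is exactly the right way to make that step precise.
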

\begin{proof} Every $\rho \in \mathrm{Comeas}(P^*,A,B,V)$ induces a measuring $\rho^\vee \colon P \otimes A \to B$ where $\rho^\vee(p\otimes a):=a_{(1)}(p)a_{(0)}$ for all $a\in A$ and $p\in P$.
Then $\cosupp \rho^\vee \subseteq \cosupp \rho \subseteq V$ and $\rho^\vee \in \mathrm{Meas}(P,A,B,V)$.

Conversely, if $\psi \in \mathrm{Meas}(P,A,B,V)$, then $\cosupp \psi$ is pointwise finite dimensional.
Hence for every $a\in A$ there exist $n\in\mathbb N$, $b_1, \ldots, b_n \in B$, $p_1^*, \ldots, p_n^* \in P^*$
such that for every $p\in P$ we have $\psi(p\otimes a)=\sum_{i=1}^n p_i^*(p)b_i$.
Note that for fixed linearly independent $b_1, \ldots, b_n$ linear functions
$p_1^*, \ldots, p_n^* \in P^*$ are defined uniquely. Hence there exists a unique comeasuring $\rho \colon A \to B \otimes P^*$ such that $\rho^\vee = \psi$. The values of $\rho$ are defined by
$\rho(a)=\sum_{i=1}^n b_i \otimes p_i^*$.
Since $V$ is closed, by Lemma~\ref{LemmaLinearMap(Co)modClosure} we have $\cosupp \rho = \overline{\cosupp \psi} \subseteq V$
and $\rho \in \mathrm{Comeas}(P^*,A,B,V)$.
\end{proof}

Recall that for an algebra $A$ the finite dual of $A$ is denoted by $A^\circ$.

\begin{theorem}\label{TheoremUniv(Co)measDuality} Let $A$ and $B$ be $\Omega$-algebras and let $V \subseteq \mathbf{Vect}_F(A,B)$ be a pointwise finite dimensional subspace closed in the finite topology.  Denote the restriction of $\widetilde{\rho_{A,B,V}} \colon 
\mathcal{A}^\square(A,B,V)^* \otimes A \to B$ on $\mathcal{A}^\square(A,B,V)^\circ \otimes A$
by $\widetilde{\rho_{A,B,V}}$ too. If $\theta$ is the unique coalgebra homomorphism making
the diagram
$$\xymatrix{ {}_\square \mathcal{C}(A,B,V) \otimes A \ar[dr]^(0.6){\psi_{A,B,V}}& \\
                   &   B \\
\mathcal{A}^\square(A,B,V)^\circ \otimes A \ar[ru]_(0.6){\widetilde{\rho_{A,B,V}}} \ar@{-->}[uu]^{\theta \otimes \id_A} } $$  commutative,
then $\theta$ is a coalgebra isomorphism.
\end{theorem}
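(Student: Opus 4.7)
The approach is to apply the Yoneda lemma after establishing a chain of natural bijections between the representable functors $\mathbf{Coalg}_F(-,{}_\square\mathcal{C}(A,B,V))$ and $\mathbf{Coalg}_F(-,\mathcal{A}^\square(A,B,V)^\circ)$. First I would check that $\widetilde{\rho_{A,B,V}}$ is genuinely an object of $\mathbf{Meas}(A,B,V)$: the measuring property is inherited from $\rho_{A,B,V}$ being a comeasuring (via the remark that the restriction of $\rho^\vee$ to $Q^\circ\otimes A$ is always a measuring), while $\cosupp\widetilde{\rho_{A,B,V}}\subseteq\cosupp\rho_{A,B,V}\subseteq V$ by construction. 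This guarantees that the terminal property of ${}_\square\mathcal{C}(A,B,V)$ in $\mathbf{Meas}(A,B,V)$ uniquely produces the coalgebra map $\theta$ making the stated diagram commute.

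For any coalgebra $C$, I would then assemble the natural chain of bijections
\begin{align*}
\mathbf{Coalg}_F(C, {}_\square \mathcal{C}(A,B,V)) & \stackrel{(1)}{\cong} \mathrm{Meas}(C, A, B, V)\\
& \stackrel{(2)}{\cong} \mathrm{Comeas}(C^*, A, B, V) \\
& \stackrel{(3)}{\cong} \mathbf{Alg}_F(\mathcal{A}^\square(A,B,V), C^*) \\
& \stackrel{(4)}{\cong} \mathbf{Coalg}_F(C, \mathcal{A}^\square(A,B,V)^\circ),
\end{align*}
where $(1)$ comes from the terminal property of ${}_\square\mathcal{C}(A,B,V)$, $(2)$ is Lemma~\ref{LemmaMeasComeasBijection} (whose applicability relies on $V$ being pointwise finite dimensional and closed in the finite topology), $(3)$ is the initial property of $\mathcal{A}^\square(A,B,V)$, and $(4)$ is the classical finite dual adjunction $\mathbf{Alg}_F(A,C^*)\cong\mathbf{Coalg}_F(C,A^\circ)$. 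All four bijections are natural in $C$.

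By the Yoneda lemma this natural isomorphism of representable functors is induced by a unique coalgebra isomorphism $\eta\colon{}_\square\mathcal{C}(A,B,V)\to\mathcal{A}^\square(A,B,V)^\circ$, obtained by tracking $\id_{{}_\square\mathcal{C}(A,B,V)}$ through the chain. Tracing $\id_{\mathcal{A}^\square(A,B,V)^\circ}$ backwards through the inverse chain yields: under $(4)^{-1}$ the canonical inclusion $j\colon\mathcal{A}^\square(A,B,V)\hookrightarrow\mathcal{A}^\square(A,B,V)^{\circ *}$; under $(3)^{-1}$ the comeasuring $(\id_B\otimes j)\rho_{A,B,V}$; under $(2)^{-1}$, by the explicit formula in Lemma~\ref{LemmaMeasComeasBijection}, the measuring $\widetilde{\rho_{A,B,V}}$; and finally under $(1)^{-1}$ the unique coalgebra map making the stated diagram commute, which is precisely $\theta$. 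Hence $\theta$ and $\eta$ are mutually inverse by Yoneda, so $\theta$ is a coalgebra isomorphism. The main delicate point is verifying that bijection $(2)$ preserves the cosupport containment in $V$ in both directions, which is exactly where closedness of $V$ in the finite topology (through Lemma~\ref{LemmaLinearMap(Co)modClosure}) and pointwise finite dimensionality of $V$ are essential; the rest of the argument is a routine diagram chase checking that each of the four bijections is natural in $C$.
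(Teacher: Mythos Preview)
Your proposal is correct and follows essentially the same approach as the paper: both assemble the chain of natural bijections
\[
\mathbf{Coalg}_F(P, {}_\square \mathcal{C}(A,B,V)) \cong \mathrm{Meas}(P,A,B,V) \cong \mathrm{Comeas}(P^*,A,B,V) \cong \mathbf{Alg}_F(\mathcal{A}^\square(A,B,V), P^*) \cong \mathbf{Coalg}_F(P, \mathcal{A}^\square(A,B,V)^\circ)
\]
and then invoke Yoneda. Your write-up is slightly more explicit (you spell out why $\widetilde{\rho_{A,B,V}}$ lies in $\mathbf{Meas}(A,B,V)$ and trace the identities through the chain step by step), but there is no substantive difference from the paper's argument.
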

\begin{proof}
Inspired by~\cite[Remark~1.3]{Tambara},
we can add the bijection from Lemma~\ref{LemmaMeasComeasBijection} to the following bijections natural in the coalgebra $P$:
\begin{equation}\label{EqTambaraBijections}\begin{split}\mathbf{Coalg}_F(P, {}_\square \mathcal{C}(A,B,V))\cong \mathrm{Meas}(P,A,B,V) \\ \cong \mathrm{Comeas}(P^*,A,B,V)\cong
\mathbf{Alg}_F(\mathcal{A}^\square(A,B,V), P^*)
\cong \mathbf{Coalg}_F(P, \mathcal{A}^\square(A,B,V)^\circ).\end{split}
\end{equation}

Now if we substitute for $P$ the coalgebra $\mathcal{A}^\square(A,B,V)^\circ$,
the coalgebra homomorphism $\theta \colon \mathcal{A}^\square(A,B,V)^\circ \to {}_\square \mathcal{C}(A,B,V)$
will correspond to $\id_{\mathcal{A}^\square(A,B,V)^\circ}$.
If we substitute for $P$ the coalgebra ${}_\square \mathcal{C}(A,B,V)$,
by the naturality,
the homomorphism $\id_{{}_\square \mathcal{C}(A,B,V)}$ 
will correspond to $\theta^{-1}$. Hence $\theta$ is a coalgebra isomorphism.
\end{proof}

\begin{remark} One can formulate this duality in the form of an adjuction too.
Let us call a measuring $\psi \colon P \otimes A \to B$ \textit{rational}
if $\psi = \rho^{\vee}_0\bigr|_{P \otimes A}$ for some linear map $\rho_0 \colon A \to B \otimes P^*$.
(Here as usual we treat $P \otimes A$ as a subspace of $P^{**} \otimes A$.)
Denote $\hat\psi := \rho_0$
which is defined uniquely for every rational $\psi$ and is in fact a comeasuring. In addition, denote by $\mathbf{Meas}^{\mathrm{rat}}(A,B,V)$ the full subcategory 
of $\mathbf{Meas}(A,B,V)$ whose objects are all rational measurings $\psi \colon P \otimes A \to B$.
Then for every subspace $V \subseteq \mathbf{Vect}_F(A,B)$ closed in finite topology, rational measuring $\psi \colon P \otimes A \to B$ and
comeasuring $\rho \colon A \to B \otimes Q$, such that $\cosupp \psi, \cosupp\rho \subseteq V$,
there is a natural bijection $$\mathbf{Meas}^{\mathrm{rat}}(A,B,V)(\psi, \rho^{\vee}\bigr|_{Q^\circ \otimes A}) \cong \mathbf{Comeas}(A,B,V)(\rho, \hat\psi)$$
(a restriction of the bijection $\mathbf{Coalg}_F(P,Q^\circ)\cong \mathbf{Alg}_F(Q,P^*)$) which means that the corresponding contravariant functors are adjoint.
Here we get an alternative proof of Theorem~\ref{TheoremUniv(Co)measDuality}.
(It is sufficient to use the contravariant version of the fact that right adjoint covariant functors preserve terminal objects.)
\end{remark}

\section{Universal (co)actions on $\Omega$-algebras}

\subsection{Actions}\label{SectionActions}

If $B$ is a bialgebra, then the category of left $B$-modules is a monoidal category. Unpacking the definition of an $\Omega$-algebra therein, we obtain the following.

\begin{definition}
Let $B$ be a bialgebra and let $A$ be an $\Omega$-algebra. We say that 
a linear map $\psi \colon B \otimes A \to A$
is a \textit{$B$-action} on $A$ or, which is the same, $\psi$ defines on $A$ a structure of a \textit{(left) $B$-module $\Omega$-algebra} if the following two conditions hold: \begin{enumerate}
\item $\psi$ defines on $A$ a structure of a (unital) left $B$-module;
\item $\psi$ is a measuring from $A$ to $A$.
\end{enumerate}
\end{definition}

If $\Omega=\lbrace\mu, u\rbrace$ and $A$ is a unital algebra, then we get the usual definition of a unital module algebra.
If $\Omega=\lbrace\mu\rbrace$, then the algebra is not necessarily unital.
Finally, if $\Omega=\lbrace\Delta, \varepsilon\rbrace$ and $A$ is a coalgebra, then we get the definition of a module
coalgebra.

For actions we can use the notion of cosupport, equivalence, and the preorder introduced in Section~\ref{SectionLinearMaps}.

Note that for an action $\psi \colon B \otimes A \to A$ its cosupport $\cosupp\psi$ is
just the image of the corresponding algebra homomorphism $B \to \End_F(A)$. 
In particular, $\cosupp\psi$ is a unital subalgebra in $\End_F(A)$.

\begin{theorem}\label{TheoremsquareBBialgebra}
Let $A$ be an $\Omega$-algebra and let $V\subseteq \End_F(A)$ be a unital subalgebra.
Then the $V$-universal measuring coalgebra ${}_\square \mathcal{B}(A,V) := {}_\square \mathcal{C}(A,A,V)$ admits a structure of a bialgebra such that for any bialgebra $B$ and any action
$\psi \colon B \otimes A \to A$ such that $\cosupp \psi \subseteq V$ the unique coalgebra homomorphism $\varphi$
making the diagram below commutative is in fact a bialgebra homomorphism:
\begin{equation}\label{EqTheoremsquareBBialgebra}\xymatrix{ B \otimes A \ar[r]^(0.6){\psi} \ar@{-->}[d]_{\varphi \otimes \id_A} & A \\
{}_\square \mathcal{B}(A,V) \otimes A  \ar[ru]_{\psi_{A,V}}  } \end{equation}
(Here $\psi_{A,V} := \psi_{A,A,V}$.)
\end{theorem}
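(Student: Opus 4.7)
The strategy is to build the bialgebra structure on ${}_\square\mathcal{B}(A,V)$ by applying the universal property of Theorem~\ref{TheoremUnivMeasExistence} to carefully chosen auxiliary measurings. The key observation is that if $\psi_1\colon P_1\otimes A\to A$ and $\psi_2\colon P_2\otimes A\to A$ are measurings of $\Omega$-algebras, then their composite
\[
\psi_1\bullet\psi_2\colon (P_1\otimes P_2)\otimes A\to A,\qquad p_1\otimes p_2\otimes a\mapsto \psi_1(p_1\otimes\psi_2(p_2\otimes a)),
\]
is again a measuring, where $P_1\otimes P_2$ carries the tensor product coalgebra structure. This is a routine diagram chase on $\omega\in\Omega$ using that each $\psi_i$ is compatible with $\omega$. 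Moreover, under the correspondence $\overline{(\cdot)}\colon P\to\Vect_F(A,B)$, we have $\cosupp(\psi_1\bullet\psi_2)\subseteq(\cosupp\psi_1)\cdot(\cosupp\psi_2)$ where the product is composition in $\End_F(A)$. Since $V$ is a \emph{unital subalgebra} of $\End_F(A)$, it follows that whenever $\cosupp\psi_i\subseteq V$, the composite has cosupport in $V\cdot V\subseteq V$.

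Applying this to $\psi_1=\psi_2=\psi_{A,V}$, the composite $\psi_{A,V}\bullet\psi_{A,V}$ lies in $\mathbf{Meas}(A,A,V)$, so the terminal property of $({}_\square\mathcal{B}(A,V),\psi_{A,V})$ yields a unique coalgebra map
\[
m\colon {}_\square\mathcal{B}(A,V)\otimes {}_\square\mathcal{B}(A,V)\to{}_\square\mathcal{B}(A,V)
\]
with $\psi_{A,V}\circ(m\otimes\id_A)=\psi_{A,V}\bullet\psi_{A,V}$. Similarly, the trivial measuring $\iota\colon F\otimes A\to A$, $\alpha\otimes a\mapsto\alpha a$, has cosupport $\langle\id_A\rangle_F\subseteq V$ (using that $V$ is unital), hence induces a unique coalgebra map $u\colon F\to{}_\square\mathcal{B}(A,V)$ with $\psi_{A,V}\circ(u\otimes\id_A)=\iota$. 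The bialgebra axioms (associativity, unitality) all follow from \emph{uniqueness} in the universal property: for instance, the two parallel maps
\[
m\circ(m\otimes\id),\ m\circ(\id\otimes m)\colon {}_\square\mathcal{B}(A,V)^{\otimes 3}\to {}_\square\mathcal{B}(A,V)
\]
are both coalgebra maps whose postcomposition with $\psi_{A,V}(-\otimes\id_A)$ equals $\psi_{A,V}\bullet\psi_{A,V}\bullet\psi_{A,V}$, which is an object of $\mathbf{Meas}(A,A,V)$ with respect to the coalgebra ${}_\square\mathcal{B}(A,V)^{\otimes 3}$; uniqueness forces them to coincide. Unitality is analogous. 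Compatibility between $m$, $u$ and the coalgebra structure is automatic since both are coalgebra morphisms.

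For the universal property: given any bialgebra $B$ with action $\psi\colon B\otimes A\to A$ satisfying $\cosupp\psi\subseteq V$, Theorem~\ref{TheoremUnivMeasExistence} already provides the unique coalgebra homomorphism $\varphi$ making \eqref{EqTheoremsquareBBialgebra} commute. To see $\varphi$ is a bialgebra homomorphism, observe that $\varphi\circ m_B$ and $m\circ(\varphi\otimes\varphi)$ are both coalgebra maps $B\otimes B\to{}_\square\mathcal{B}(A,V)$ which, after postcomposition with $\psi_{A,V}(-\otimes\id_A)$, yield the same measuring $B\otimes B\otimes A\to A$, namely $(b_1,b_2,a)\mapsto\psi(b_1\otimes\psi(b_2\otimes a))$ (using that $B$ acts, so this equals both $\psi\circ(m_B\otimes\id_A)$ and $\psi\bullet\psi$). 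Uniqueness forces $\varphi\circ m_B=m\circ(\varphi\otimes\varphi)$. The identity $\varphi\circ u_B=u$ is obtained by the same argument applied to the trivial measuring.

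The main subtlety, and the only place where the hypothesis that $V$ is a \emph{unital subalgebra} is essential, is in verifying the cosupport inclusions $\cosupp(\psi_{A,V}\bullet\psi_{A,V})\subseteq V$ and $\cosupp(\iota)\subseteq V$: without these, the composites would not lie in $\mathbf{Meas}(A,A,V)$ and the universal property could not be invoked. Once these inclusions are secured, the remainder of the argument is entirely formal, driven by uniqueness in the terminal object property.
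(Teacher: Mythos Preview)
Your proposal is correct and follows essentially the same approach as the paper's proof: define the multiplication via the universal property applied to the composite measuring $\psi_{A,V}\circ(\id\otimes\psi_{A,V})$, define the unit via the trivial measuring $F\otimes A\to A$, and then deduce all algebra axioms and the multiplicativity of $\varphi$ from uniqueness in the terminal-object property. The paper carries out exactly these steps, only with each uniqueness argument unpacked into an explicit cube or prism diagram, whereas you package the same reasoning into the ``both maps induce the same measuring, hence coincide'' formulation; the content is identical.
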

We call ${}_\square \mathcal{B}(A,V)$ the \textit{$V$-universal acting bialgebra}.
\begin{proof}[Proof of Theorem~\ref{TheoremsquareBBialgebra}]
Note that $\psi_{A,V}(\id_{{}_\square \mathcal{B}(A,V)} \otimes \psi_{A,V})
\colon {}_\square \mathcal{B}(A,V)\otimes {}_\square \mathcal{B}(A,V) \otimes A \to A$
is a measuring too and $\cosupp(\psi_{A,V}(\id_{{}_\square \mathcal{B}(A,V)} \otimes \psi_{A,V}))
\subseteq V$ since $V$ is a subalgebra.
Hence there exists a unique coalgebra homomorphism $\mu \colon 
{}_\square \mathcal{B}(A,V)\otimes {}_\square \mathcal{B}(A,V) \to {}_\square \mathcal{B}(A,V)$
making the following diagram commutative:
$$\xymatrix{ {}_\square \mathcal{B}(A,V)\otimes {}_\square \mathcal{B}(A,V) \otimes A \ar[rrr]^(0.6){\id_{{}_\square \mathcal{B}(A,V)} \otimes \psi_{A,V}} \ar@{-->}[d]_{\mu \otimes \id_A} && & {}_\square \mathcal{B}(A,V) \otimes A  \ar[d]^{\psi_{A,V}} \\
{}_\square \mathcal{B}(A,V) \otimes A  \ar[rrr]_{\psi_{A,V}} &&  & A  } $$
Now we define the multiplication in ${}_\square \mathcal{B}(A,V)$ using $\mu$.

In order to define the identity element we consider the trivial map $$F \otimes A
\mathrel{\widetilde\to}  A,$$ which is a measuring.
There exists a unique coalgebra homomorphism $u \colon F \to {}_\square \mathcal{B}(A,V)$
making the diagram below commutative:
$$\xymatrix{ F \otimes A \ar[r]^(0.6){\sim} \ar@{-->}[d]_{u \otimes \id_A} & A \\
{}_\square \mathcal{B}(A,V) \otimes A  \ar[ru]_{\psi_{A,V}}  } $$
We define $1_{{}_\square \mathcal{B}(A,V)} := u(1_F)$.

Now we have to prove that the multiplication $\mu$ is associative and that $1_{{}_\square \mathcal{B}(A,V)}$
is indeed the identity element of ${}_\square \mathcal{B}(A,V)$.

Consider the diagram

{\tiny
$$\xymatrix{  {}_\square \mathcal{B}(A,V)\otimes {}_\square \mathcal{B}(A,V) \otimes {}_\square\mathcal{B}(A,V) \otimes A 
\ar[rr]^{ \id_{{}_\square \mathcal{B}(A,V)} \otimes \mu\otimes \id_A} \ar[dd]^(0.7){\id_{{}_\square \mathcal{B}(A,V)} \otimes\id_{{}_\square \mathcal{B}(A,V)} \otimes\psi_{A,V}} \ar[rd]^(0.6){\qquad\mu\otimes \id_{{}_\square \mathcal{B}(A,V)} \otimes \id_A} &&  {}_\square \mathcal{B}(A,V) \otimes {}_\square\mathcal{B}(A,V) \otimes A \ar'[d][dd]^{\id_{{}_\square \mathcal{B}(A,V)} \otimes\psi_{A,V}} \ar[rd]^{\mu\otimes \id_A} & \\
& {}_\square \mathcal{B}(A,V) \otimes {}_\square\mathcal{B}(A,V) \otimes A \ar[rr]^(0.4){\mu\otimes \id_A} \ar[dd]^(0.7){\id_{{}_\square \mathcal{B}(A,V)} \otimes\psi_{A,V}}
&& {}_\square\mathcal{B}(A,V) \otimes A \ar[dd]^{\psi_{A,V}} \\
{}_\square \mathcal{B}(A,V) \otimes {}_\square\mathcal{B}(A,V) \otimes A \ar'[r][rr]^(0.4){\id_{{}_\square \mathcal{B}(A,V)} \otimes\psi_{A,V}} \ar[rd]^{\mu\otimes \id_A} && {}_\square\mathcal{B}(A,V) \otimes A \ar[rd]^{\psi_{A,V}} & \\
& {}_\square\mathcal{B}(A,V) \otimes A \ar[rr]_{\psi_{A,V}} && A } $$ }

The left face is commutative since both compositions are equal to 
$\mu \otimes \psi_{A,V}$. The commutativity of the other faces except the upper one follows from
the definition of $\mu$.
Therefore the compositions of the upper face being composed with $\psi_{A,V}$
are equal too.
Now the universal property of ${}_\square \mathcal{B}(A,V)$ implies the commutativity of the diagram
 $$\xymatrix{ {}_\square \mathcal{B}(A,V)\otimes {}_\square \mathcal{B}(A,V) \otimes {}_\square\mathcal{B}(A,V) \ar[rrr]^(0.6){\id_{{}_\square \mathcal{B}(A,V)} \otimes \mu} \ar[d]_{\mu \otimes \id_{{}_\square \mathcal{B}(A,V)}} && & {}_\square \mathcal{B}(A,V) \otimes {}_\square\mathcal{B}(A,V)  \ar[d]^{\mu} \\
{}_\square \mathcal{B}(A,V) \otimes {}_\square \mathcal{B}(A,V)  \ar[rrr]_{\mu} &&  & {}_\square\mathcal{B}(A,V)  } $$
and the associativity follows.

Consider the diagram

$$\xymatrix{ F \otimes {}_\square \mathcal{B}(A,V)\otimes A \ar[rr]^{u \otimes \id_{{}_\square \mathcal{B}(A,V)}\otimes\id_A} \ar[rd]^{\sim} \ar[dd]_(0.7){\id_F  \otimes\psi_{A,V}}
  & & {}_\square \mathcal{B}(A,V)\otimes {}_\square \mathcal{B}(A,V)\otimes A  \ar[dd]^{\id_{{}_\square \mathcal{B}(A,V)} \otimes \psi_{A,V}}  \ar[ld]^{\mu\otimes\id_A}\\
 & {}_\square \mathcal{B}(A,V)\otimes A \ar[dd]^(0.3){\psi_{A,V}} & \\
F \otimes A \ar'[r][rr]^(0.3){u\otimes\id_A} \ar[rd]^{\sim}
  & & {}_\square \mathcal{B}(A,V)\otimes  A  \ar[ld]^{\psi_{A,V}}\\
 &  A  & 
 } $$
 
 The commutativity of the rear face and the left face is obvious. The commutativity of the lower face
 follows from the definition of $u$ and the commutativity of the right face follows from the definition
 of $\mu$. Therefore, the coalgebra homomorphisms forming the upper face become equal after their
 composition with $\psi_{A,V}$. 
 Now universal property of ${}_\square \mathcal{B}(A,V)$ implies the commutativity of the diagram
 $$\xymatrix{ F \otimes {}_\square \mathcal{B}(A,V) \ar[rr]^{u \otimes \id_{{}_\square \mathcal{B}(A,V)}} \ar[rd]^{\sim} 
  & & {}_\square \mathcal{B}(A,V)\otimes {}_\square \mathcal{B}(A,V) \ar[ld]^{\mu}\\
 & {}_\square \mathcal{B}(A,V)  & }$$
 Hence $1_{{}_\square \mathcal{B}(A,V)}$ is the left identity element of ${}_\square \mathcal{B}(A,V)$.
 
 Analogously, if we consider the diagram
  $$\xymatrix{  {}_\square \mathcal{B}(A,V)\otimes F \otimes A \ar[rr]^{ \id_{{}_\square \mathcal{B}(A,V)}\otimes u \otimes\id_A} \ar[rd]^{\sim} \ar[dd]^{\sim}
  & & {}_\square \mathcal{B}(A,V)\otimes {}_\square \mathcal{B}(A,V)\otimes A  \ar[dd]^{\id_{{}_\square \mathcal{B}(A,V)} \otimes \psi_{A,V}}  \ar[ld]^{\mu\otimes\id_A}\\
 & {}_\square \mathcal{B}(A,V)\otimes A \ar[dd]^(0.3){\psi_{A,V}} & \\
{}_\square \mathcal{B}(A,V) \otimes A \ar@{=}'[r][rr] \ar[rd]_{\psi_{A,V}}
  & & {}_\square \mathcal{B}(A,V)\otimes  A  \ar[ld]^{\psi_{A,V}}\\
 &  A  & 
 } $$
 we get the commutativity of the diagram
 $$\xymatrix{  {}_\square \mathcal{B}(A,V) \otimes F \ar[rr]^{\id_{{}_\square \mathcal{B}(A,V)} \otimes u} \ar[rd]^{\sim} 
  & & {}_\square \mathcal{B}(A,V)\otimes {}_\square \mathcal{B}(A,V) \ar[ld]^{\mu}\\
 & {}_\square \mathcal{B}(A,V)  & }$$
 Hence $1_{{}_\square \mathcal{B}(A,V)}$ is the right identity element of ${}_\square \mathcal{B}(A,V)$ and ${}_\square \mathcal{B}(A,V)$ is indeed a bialgebra.
 
 Suppose $B$ is another bialgebra and $\psi \colon B \otimes A \to A$ is an action
 such that $\cosupp \psi \subseteq V$.
 Denote by $\varphi \colon B \to {}_\square \mathcal{B}(A,V)$ the unique coalgebra homomorphism 
making the diagram \eqref{EqTheoremsquareBBialgebra} commutative.
We claim that $\varphi$ is a bialgebra homomorphism.

Consider the diagram

$$\xymatrix{  B \otimes B \otimes A 
\ar[rr]^(0.4){\varphi\otimes\varphi\otimes\id_A} \ar[dd]_{\id_B \otimes \psi}  \ar[rd]^{\mu_B\otimes\id_A} &&  {}_\square \mathcal{B}(A,V) \otimes {}_\square\mathcal{B}(A,V) \otimes A \ar'[d][dd]^{\id_{{}_\square \mathcal{B}(A,V)}\otimes \psi_{A,V}} \ar[rd]^{\mu\otimes\id_A} & \\
& B \otimes A \ar[rr]^(0.3){\varphi\otimes\id_A} \ar[dd]^(0.3){\psi} && {}_\square\mathcal{B}(A,V) \otimes A \ar[dd]^{\psi_{A,V}} \\
B \otimes A \ar'[r][rr]^(0.3){\varphi\otimes\id_A} \ar[rd]^{\psi} && {}_\square\mathcal{B}(A,V) \otimes A  \ar[rd]^{\psi_{A,V}} \\
& A \ar@{=}[rr] && A } $$ 
 where $\mu_B \colon B\otimes B \to B$ is the multiplication in $B$.
 The lower face and the front face are commutative by the definition of $\varphi$. The rear face is commutative since  it coincides with the lower face (and the front face) tensored  by $\varphi$ from the left.
 The left and the right faces are commutative since both $\psi$ and $\psi_{A,V}$ define on $A$ the structure
 of a left module. 
 
 Therefore, after the composition with $\psi_{A,V}$, the upper face becomes commutative too
 and the universal property of ${}_\square\mathcal{B}(A,V)$ implies the commutativity of the diagram
 $$\xymatrix{ B \otimes B   \ar[r]^(0.3){\varphi\otimes\varphi}\ar[d]_{\mu_B} & {}_\square \mathcal{B}(A,V) \otimes {}_\square\mathcal{B}(A,V)  \ar[d]^{\mu} \\ 
 B \ar[r]_{\varphi} & {}_\square\mathcal{B}(A,V) }$$ Therefore $\varphi$ preserves the multiplication.
 
 Consider the diagram $$\xymatrix{ & F \otimes A \ar[lddd]_{u_B \otimes \id_A} 
 \ar[dd]^(0.7){u \otimes \id_A}  \ar[rddd]^{\sim} & \\ & & \\
 & {}_\square\mathcal{B}(A,V) \otimes A  
 \ar[rd]_{\psi_{A,V}} & \\
 B \otimes A \ar[rr]_{\psi} \ar[ru]_{\varphi \otimes \id_A}& & A }$$ where $u_B \colon F  \to B$ is defined by $u(\alpha)=\alpha 1_B$
 for all $\alpha\in F$.
 
 The large triangle and the right triangle are commutative by the unitality of the module structures on $A$.
 The lower triangle is commutative by the definition of $\varphi$. Hence the left triangle becomes commutative
 after the composition with $\psi_{A,V}$. Thus the universal property of ${}_\square\mathcal{B}(A,V)$ implies the commutativity of the diagram
 $$\xymatrix{  & F \ar[ld]_{u_B} \ar[rd]^{u} & \\
 B \ar[rr]_{\varphi} & & {}_\square\mathcal{B}(A,V) }$$
 Therefore $\varphi$ preserves the identity element and is indeed a bialgebra homomorphism.
\end{proof}

\begin{example}
1) When $A$ is a unital associative algebra, the bialgebra ${}_\square \mathcal{B}(A,\End_F(A))$ is exactly the Sweedler universal measuring bialgebra~\cite[Chapter VII]{Sweedler} and we denote it simply by ${}_\square \mathcal{B}(A)$.

2) When $C$ is a coalgebra, the bialgebra ${}_\square \mathcal{B}(C,\End_F(C))$ will be called the \textit{universal acting bialgebra of $C$} and we denote it simply by ${}_{\sqbullet} \mathcal{B}(C)$.  
\end{example}

Consider now Hopf algebra actions.

 Recall that the embedding functor $\mathbf{Hopf}_F \to \mathbf{Bialg}_F$
has a right adjoint functor $H_r \colon \mathbf{Bialg}_F \to \mathbf{Hopf}_F$.
(See~\cite{AgoreCatConstr, CH}.) Let ${}_\square \mathcal{H}(A,V) := H_r({}_\square \mathcal{B}(A,V))$.
Define the action $\psi^{\mathbf{Hopf}}_{A,V} \colon {}_\square \mathcal{H}(A,V) \otimes A \to A$
 as the composition of $\psi_{A,V}$ and the counit ${}_\square \mathcal{H}(A,V)
\to {}_\square \mathcal{B}(A,V)$ of the adjunction tensored by $\id_A$.
Then for any Hopf algebra $H$ and any action $\psi \colon H \otimes A \to A$ with $\cosupp \psi \subseteq V$ there exists a unique Hopf algebra homomorphism $\varphi$
making the diagram below commutative:
$$\xymatrix{ H \otimes A \ar[r]^(0.6){\psi} \ar@{-->}[d]_{\varphi \otimes \id_A} & A \\
{}_\square \mathcal{H}(A,V) \otimes A  \ar[ru]_(0.6){\psi^{\mathbf{Hopf}}_{A,V}}  } $$ 
 We call ${}_\square \mathcal{H}(A,V)$ the \textit{$V$-universal acting Hopf algebra}.
 
\begin{example}\label{exSw} 
1) When $A$ is a unital associative algebra, the Hopf algebra ${}_\square \mathcal{H}(A,\End_F(A))$ will be called the \textit{universal acting Hopf algebra of $A$} and we denote it simply by ${}_\square \mathcal{H}(A)$.

2) When $C$ is a coalgebra, the Hopf algebra ${}_\square \mathcal{H}(C,\End_F(C))$ will be called the \textit{universal acting Hopf algebra of $C$} and we denote it simply by ${}_{\sqbullet} \mathcal{H}(C)$. It can be easily seen that ${}_{\sqbullet} \mathcal{H}(C)$ is the terminal object in the category whose objects are all module coalgebra structures $\theta_{H} \colon H\otimes C \to C$ on $C$ and the morphisms between two such objects  $\theta_{H_{1}} \colon H_{1}\otimes C \to C$ and $\theta_{H_{2}} \colon H_{2}\otimes C \to C$ are Hopf algebra homomorphisms $f \colon H_{1} \to H_{2}$ which make the following diagram commutative:
$$
\xymatrix{ H_{1} \otimes C\ar[r]^(0.6){\theta_{H_{1}}}\ar[d]_{f \otimes \mathrm{id}_{C}}   & C \\
H_{2} \otimes C \ar[ur]_{\theta_{H_{2}}} & {}}
$$

3) If $\psi \colon H \otimes A \to A$ is some Hopf algebra action on an ordinary algebra $A$,
 then ${}_\square \mathcal{H}(A,\cosupp \psi)$ is exactly the universal Hopf algebra of $\psi$
 introduced in~\cite{AGV1}. Indeed, by the definition of ${}_\square \mathcal{H}(A,\cosupp \psi)$
 we have $\cosupp\left(\psi^{\mathbf{Hopf}}_{A,\cosupp \psi}\right) \subseteq 
 \cosupp \psi$, the action   $\psi^{\mathbf{Hopf}}_{A,\cosupp \psi}$ is universal among all actions equivalent to $\psi$ and therefore $\cosupp \psi \subseteq \cosupp\left(\psi^{\mathbf{Hopf}}_{A,\cosupp \psi}\right)$. Hence $\cosupp\left(\psi^{\mathbf{Hopf}}_{A,\cosupp \psi}\right)=\cosupp \psi$.
 \end{example}
  
  Note that one can regard a unital algebra $A$ as a $\lbrace \mu \rbrace$-algebra
  and as a $\lbrace \mu, u \rbrace$-algebra. This leads to two different definitions
  of ${}_\square \mathcal{H}(A,V)$.
Theorem~\ref{TheoremHopfActionAlgebraUnital} below shows that when $V$ arises from some \textit{unital} action $\psi 
\colon H \otimes A \to A$
on $A$, i.e. $\psi(h \otimes 1_A)=\varepsilon(h) 1_A$ for all $h\in H$, then these two $V$-universal acting Hopf algebras are isomorphic.

\begin{theorem}\label{TheoremHopfActionAlgebraUnital}
Let $A$ be a unital algebra and let $V\subseteq \End_F(A)$
be a unital subalgebra.
Suppose $1_A$ is a common eigenvector for all operators from $V$. % $V 1_A = F 1_A$. 
(E.g., $V=\cosupp \psi$ for some unital action $\psi$.)
Then ${}_\square \mathcal{H}(A,V)$ does not depend on whether one regards $A$ as a $\lbrace \mu \rbrace$-algebra or as a $\lbrace \mu, u \rbrace$-algebra.
\end{theorem}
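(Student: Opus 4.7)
The plan is to reduce the theorem to the following key assertion: \emph{for any Hopf algebra $H$ and any left $H$-module $\lbrace\mu\rbrace$-algebra structure $\psi\colon H\otimes A\to A$ on $A$ with $\cosupp\psi\subseteq V$, one automatically has $\psi(h\otimes 1_A)=\varepsilon(h)1_A$ for all $h\in H$.} Granting this, every such Hopf action on the $\lbrace\mu\rbrace$-algebra $A$ is in particular an action on the $\lbrace\mu,u\rbrace$-algebra $A$ with the same cosupport, so the two classes of Hopf algebra actions on $A$ with cosupport in $V$ coincide. Together with the obvious comparison in the opposite direction (every $\lbrace\mu,u\rbrace$-measuring is a $\lbrace\mu\rbrace$-measuring), the universal properties of the two candidate ${}_\square\mathcal{H}(A,V)$'s then force them to be canonically isomorphic as Hopf algebras.

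To prove the key assertion, I first use the hypothesis $V\cdot 1_A=F\cdot 1_A$ to introduce a linear map $\lambda\colon H\to F$ via $\psi(h\otimes 1_A)=\lambda(h)1_A$. The unitality of the module action gives $\lambda(1_H)=1$, and the module axiom $\psi(hk\otimes 1_A)=\psi(h\otimes \psi(k\otimes 1_A))$, unfolded by $\psi(k\otimes 1_A)=\lambda(k)1_A$, produces $\lambda(hk)=\lambda(h)\lambda(k)$; thus $\lambda$ is a character of $H$. Next, applying the $\lbrace\mu\rbrace$-measuring identity to the product $1_A\cdot 1_A=1_A$ yields
\[
\lambda(h)\,1_A \;=\; \psi(h\otimes 1_A) \;=\; \psi(h_{(1)}\otimes 1_A)\,\psi(h_{(2)}\otimes 1_A) \;=\; \lambda(h_{(1)})\,\lambda(h_{(2)})\,1_A,
\]
that is, $\lambda*\lambda=\lambda$ in the convolution algebra $H^*$.

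The decisive step invokes the antipode $S$ of $H$: being a character, $\lambda$ is convolution-invertible with inverse $\lambda\circ S$, since
\[
(\lambda*(\lambda\circ S))(h)=\lambda(h_{(1)}S(h_{(2)}))=\lambda(\varepsilon(h)\,1_H)=\varepsilon(h).
\]
Combining this with the idempotency $\lambda*\lambda=\lambda$ and the associativity of convolution, I compute
\[
\lambda \;=\; \lambda*\varepsilon \;=\; \lambda*(\lambda*(\lambda\circ S)) \;=\; (\lambda*\lambda)*(\lambda\circ S) \;=\; \lambda*(\lambda\circ S) \;=\; \varepsilon,
\]
so that $\psi(h\otimes 1_A)=\varepsilon(h)1_A$ as claimed. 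This final cancellation is where the Hopf algebra structure (as opposed to merely bialgebra) is essential, and it constitutes the main conceptual step — without the antipode one can exhibit non-trivial idempotent characters (e.g.\ on the bialgebra $F\oplus Fe$ with $e^2=e$, $\Delta(e)=e\otimes e$, $\varepsilon(e)=1$, the assignment $\lambda(e)=0$ is an idempotent character distinct from $\varepsilon$), so the statement genuinely does not descend to the level of ${}_\square\mathcal{B}(A,V)$.
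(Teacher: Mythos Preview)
Your proof is correct and follows essentially the same approach as the paper. The paper's argument proceeds by showing that the universal $\{\mu\}$-action $\psi^{\mathbf{Hopf}}_{A,V}$ of ${}_\square\mathcal{H}(A,V)$ is automatically unital, invoking \cite[Proposition~4.3]{AGV1} for the crucial step that a Hopf algebra action with $1_A$ a common eigenvector must satisfy $h\cdot 1_A=\varepsilon(h)1_A$; you have instead written out this proposition in full (your character-idempotency argument culminating in $\lambda=\varepsilon$), and applied it to an arbitrary Hopf action with cosupport in $V$ rather than just to the universal one. The net effect is the same identification of the two universal Hopf algebras, and your explicit counterexample on the bialgebra side nicely highlights why the antipode is essential.
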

\begin{proof}
It is sufficient to prove that $\psi^\mathbf{Hopf}_{A,V} \colon {}_\square \mathcal{H}(A,V) \otimes A
\to A$ is a unital action even if we regard $A$ as a $\lbrace \mu \rbrace$-algebra.
The fact that $1_A$ is a common eigenvector for all operators from $V$ implies that
$1_A$ is a common eigenvector
for all operators from ${}_\square \mathcal{H}(A,V)$.
By~\cite[Proposition~4.3]{AGV1} the action $\psi^\mathbf{Hopf}_{A,V}$ is unital
and therefore ${}_\square \mathcal{H}(A,V)$ is a $V$-universal acting Hopf algebra on $A$ as a $\lbrace \mu, u \rbrace$-algebra too.
\end{proof}

\subsection{Coactions}\label{SectionCoactions}

Dually to the previous subsection, we can consider $\Omega$-algebras in the monoidal category of $B$-comodules over a bialgebra $B$, which leads to the following definition.

\begin{definition}
Let $B$ be a bialgebra and let $A$ be an $\Omega$-algebra. We say that 
a linear map $\rho \colon A \to A \otimes B$
is a \textit{$B$-coaction} on $A$ or, which is the same, $\rho$ defines on $A$ a structure of a \textit{(right) $B$-comodule $\Omega$-algebra} if the following two conditions hold: \begin{enumerate}
\item $\rho$ defines on $A$ a structure of a (counital) right $B$-comodule;
\item $\rho$ is a comeasuring from $A$ to $A$.
\end{enumerate}
\end{definition}

If $\Omega=\lbrace\mu, u\rbrace$ and $A$ is a unital algebra, then we get the usual definition of a unital comodule algebra.
If $\Omega=\lbrace\mu\rbrace$, then the algebra is not necessarily unital.
Finally, if $\Omega=\lbrace\Delta, \varepsilon\rbrace$ and $A$ is a coalgebra, then we get the definition of a comodule
coalgebra.

For coactions we can use the notion of cosupport, equivalence, and the preorder introduced in Section~\ref{SectionLinearMaps}.

Note that for a coaction $\rho \colon A \to A \otimes B$ its cosupport $\cosupp\rho$ is
just the image of the corresponding algebra homomorphism $B^* \to \End_F(A)$. 
In particular, $\cosupp\rho$ is a unital subalgebra in $\End_F(A)$ and $\supp\rho$ is a subcoalgebra in $B$.
Restricting linear functions on $B$ to $\supp\rho$, we obtain that $\cosupp\rho \cong (\supp\rho)^*$ as algebras by Lemma~\ref{lemmasup}. 

\begin{theorem}\label{TheoremBsquareBialgebra}
Let $A$ be an $\Omega$-algebra and let $V\subseteq \End_F(A)$ be a unital subalgebra closed in finite topology
such that there exists the $V$-universal comeasuring algebra $\mathcal{B}^\square(A,V) := \mathcal{A}^\square(A,A,V)$.
Then $\mathcal{B}^\square(A,V)$ admits a structure of a bialgebra such that for any bialgebra $B$ and any coaction $\rho \colon  A \to A \otimes B$ such that $\cosupp \rho \subseteq V$ the unique algebra homomorphism $\varphi$
making the diagram below commutative is in fact a bialgebra homomorphism:
\begin{equation}\label{EqTheoremBsquareBialgebra}\xymatrix{ A \ar[r]^(0.3){\rho_{A,V}} \ar[rd]_\rho & A \otimes \mathcal{B}^\square(A,V) \ar@{-->}[d]^{\id_A\otimes
\varphi} \\
& A\otimes B} \end{equation}
(Here $\rho_{A,V} := \rho_{A,A,V}$.)
\end{theorem}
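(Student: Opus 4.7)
The plan is to dualize the proof of Theorem~\ref{TheoremsquareBBialgebra}: replace the terminal-object property of ${}_\square \mathcal{B}(A,V)$ in $\mathbf{Meas}(A,A,V)$ by the initial-object property of $\mathcal{B}^\square(A,V)$ in $\mathbf{Comeas}(A,A,V)$, and reverse all arrows.

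First, I would construct the comultiplication and counit. The iterate $(\rho_{A,V} \otimes \id_{\mathcal{B}^\square(A,V)}) \circ \rho_{A,V}$ is again a comeasuring from $A$ to $A$, now valued in the unital algebra $\mathcal{B}^\square(A,V) \otimes \mathcal{B}^\square(A,V)$, and its cosupport is contained in $V$ since elements of this cosupport are assembled from compositions (and linear combinations) of elements of $\cosupp \rho_{A,V} \subseteq V$, which lie in $V$ because $V$ is closed under composition. The initial-object property of $\mathcal{B}^\square(A,V)$ then yields a unique algebra homomorphism
\[
\Delta \colon \mathcal{B}^\square(A,V) \to \mathcal{B}^\square(A,V) \otimes \mathcal{B}^\square(A,V)
\]
satisfying $(\id_A \otimes \Delta) \circ \rho_{A,V} = (\rho_{A,V} \otimes \id) \circ \rho_{A,V}$. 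Similarly, the trivial comeasuring $A \to A \otimes F$, $a \mapsto a \otimes 1_F$, has cosupport $F \cdot \id_A \subseteq V$ (this is where unitality of $V$ is used), yielding a unique algebra homomorphism $\varepsilon \colon \mathcal{B}^\square(A,V) \to F$.

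Next I would verify the bialgebra axioms. Mimicking the diagram chases of Theorem~\ref{TheoremsquareBBialgebra} in reversed form, both $(\Delta \otimes \id)\Delta$ and $(\id \otimes \Delta)\Delta$ are algebra homomorphisms $\mathcal{B}^\square(A,V) \to \mathcal{B}^\square(A,V)^{\otimes 3}$ that, upon tensoring with $\id_A$ and precomposing with $\rho_{A,V}$, both produce the triple iterate $(\rho_{A,V} \otimes \id \otimes \id)(\rho_{A,V} \otimes \id)\rho_{A,V}$ (again a comeasuring with cosupport in $V$). The uniqueness part of the universal property then forces them to coincide, yielding coassociativity. The counit axioms follow from analogous chases using the trivial comeasuring.

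Finally, for a $B$-coaction $\rho \colon A \to A \otimes B$ with $\cosupp \rho \subseteq V$ and $\varphi$ as in~\eqref{EqTheoremBsquareBialgebra}, I would show that $\varphi$ preserves comultiplication and counit by a uniqueness argument. Both $(\varphi \otimes \varphi)\Delta$ and $\Delta_B \varphi$ are algebra homomorphisms $\mathcal{B}^\square(A,V) \to B \otimes B$; after tensoring with $\id_A$ and precomposing with $\rho_{A,V}$, both produce the iterate $(\rho \otimes \id_B)\rho = (\id_A \otimes \Delta_B)\rho$, the first via the defining property of $\Delta$ combined with the definition of $\varphi$, and the second via coassociativity of $\rho$. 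Uniqueness in the universal property of $\mathcal{B}^\square(A,V)$ applied to the comeasuring $(\rho \otimes \id_B)\rho$ forces them to agree. The identity $\varepsilon_B \varphi = \varepsilon$ follows analogously from counitality of $\rho$ applied to the trivial comeasuring.

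The main technical hurdle is verifying, at each invocation of the universal property, that the relevant iterated comeasuring has cosupport contained in $V$. In every case this reduces to the standing hypothesis that $V$ is a unital subalgebra of $\End_F(A)$, i.e.\ closed under composition and containing $\id_A$.
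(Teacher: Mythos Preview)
Your proposal is correct and follows essentially the same approach as the paper's own proof: both construct $\Delta$ and $\varepsilon$ from the iterated and trivial comeasurings via the initial-object property, then verify the bialgebra axioms and the fact that $\varphi$ is a bialgebra map by uniqueness arguments applied to suitable comeasurings. The paper spells out the uniqueness arguments via explicit cube diagrams, but the content is the same as what you describe.
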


\begin{proof}
Note that $(\rho_{A,V}\otimes \id_{\mathcal{B}^\square(A,V)})\rho_{A,V}
\colon A \to A\otimes \mathcal{B}^\square(A,V) \otimes \mathcal{B}^\square(A,V)$
is a comeasuring too and, since $V$ is a subalgebra closed in finite topology, we have $\cosupp((\rho_{A,V}\otimes \id_{\mathcal{B}^\square(A,V)})\rho_{A,V})\subseteq V$.
Therefore, there exists a unique algebra homomorphism 
$\Delta \colon \mathcal{B}^\square(A,V) \to \mathcal{B}^\square(A,V) \otimes \mathcal{B}^\square(A,V)$
making the diagram below commutative:
$$\xymatrix{ A \ar[rr]^{\rho_{A,V}} \ar[d]_{\rho_{A,V}} && A\otimes \mathcal{B}^\square(A,V)
\ar@{-->}[d]^{\id_A\otimes \Delta} \\
A\otimes \mathcal{B}^\square(A,V) \ar[rr]_(0.4){\rho_{A,V}\otimes \id_{\mathcal{B}^\square(A,V)}} &&  A\otimes \mathcal{B}^\square(A,V) \otimes \mathcal{B}^\square(A,V)}$$
Now we define the comultiplication in $\mathcal{B}^\square(A,V)$ to be equal to $\Delta$.

In order to define the counit map, we consider the trivial map $$A
\mathrel{\widetilde\to} A  \otimes F,$$ which is a comeasuring.
There exists a unique algebra homomorphism $\varepsilon \colon   \mathcal{B}^\square(A,V) \to F$
making the diagram below commutative:
$$\xymatrix{ A \ar[r]^(0.3){\rho_{A,V}} \ar[rd]^{\sim} & A \otimes \mathcal{B}^\square(A,V)
\ar[d]^{\id_A \otimes \varepsilon}\\
& A \otimes F}$$
We define the counit in $\mathcal{B}^\square(A,V)$ to be equal to $\varepsilon$.

Now we have to prove that the comultiplication $\Delta$ is coassociative and that $\varepsilon$
and $\Delta$ indeed satisfy the counit axioms.

Consider the diagram

{\tiny
$$\xymatrix{ A \ar[rr]^{\rho_{A,V}} \ar[dd]_{\rho_{A,V}} \ar[rd]^{\rho_{A,V}} && A \otimes \mathcal{B}^\square(A,V) \ar[rd]^{\qquad \rho_{A,V} \otimes \id_{\mathcal{B}^\square(A,V)}} \ar'[d][dd]_{\rho_{A,V} \otimes \id_{\mathcal{B}^\square(A,V)}} & \\
& A \otimes \mathcal{B}^\square(A,V) \ar[rr]^(0.3){\id_A \otimes \Delta} \ar[dd]_(0.3){\rho_{A,V} \otimes \id_{\mathcal{B}^\square(A,V)}} && A \otimes \mathcal{B}^\square(A,V) \otimes \mathcal{B}^\square(A,V)\ar[dd]_(0.3){\rho_{A,V} \otimes \id_{\mathcal{B}^\square(A,V) \otimes \mathcal{B}^\square(A,V)}} \\
A \otimes \mathcal{B}^\square(A,V)  \ar'[r][rr]_(0.3){\id_A \otimes \Delta}
 \ar[rd]^{\id_A \otimes \Delta} && A \otimes \mathcal{B}^\square(A,V) \otimes \mathcal{B}^\square(A,V) \ar[rd]_(0.35){\id_A \otimes \Delta \otimes \id_{\mathcal{B}^\square(A,V)}\qquad} &\\
 & A \otimes \mathcal{B}^\square(A,V) \otimes \mathcal{B}^\square(A,V) 
 \ar[rr]_(0.4){\id_A  \otimes \id_{\mathcal{B}^\square(A,V)}\otimes \Delta}  && A \otimes \mathcal{B}^\square(A,V) \otimes \mathcal{B}^\square(A,V) \otimes \mathcal{B}^\square(A,V) } $$}

The right face is commutative since
it is the definition of $\Delta$ tensored by $\id_{\mathcal{B}^\square(A,V)}$.
The commutativity of the left, the upper and the rear face follows from
the definition of $\Delta$ too.
The front face is commutative since both compositions are equal to $\rho_{A,V} \otimes \Delta$.
Therefore the compositions of the lower face being composed with $\rho_{A,V}$
are equal too.
Now the universal property of $\mathcal{B}^\square(A,V)$ implies the commutativity of the diagram
 $$\xymatrix{ \mathcal{B}^\square(A,V) \ar[rr]_\Delta\ar[d]^\Delta && \mathcal{B}^\square(A,V) \otimes \mathcal{B}^\square(A,V) \ar[d]^{\Delta\otimes \id_{\mathcal{B}^\square(A,V)}}\\ 
 \mathcal{B}^\square(A,V) \otimes \mathcal{B}^\square(A,V) \ar[rr]_(0.42){ \id_{\mathcal{B}^\square(A,V)}\otimes\Delta} && \mathcal{B}^\square(A,V) \otimes \mathcal{B}^\square(A,V) \otimes \mathcal{B}^\square(A,V) }$$
 and the coassociativity follows.

Consider the diagram
$$\xymatrix{ A \ar[rr]^{\rho_{A,V}} \ar[rd]^{\rho_{A,V}} \ar[dd]_{\rho_{A,V}} && A \otimes \mathcal{B}^\square(A,V) \ar@{=}[ld]\ar[dd]^{\rho_{A,V} \otimes \id_{\mathcal{B}^\square(A,V)}} \\
& A \otimes \mathcal{B}^\square(A,V) \ar[dd]^(0.3){\sim} & \\
A \otimes \mathcal{B}^\square(A,V)\ar[rd]^{\sim} \ar'[r][rr]^(0.3){\id_A\otimes \Delta} && A \otimes \mathcal{B}^\square(A,V) \otimes \mathcal{B}^\square(A,V) \ar[ld]^{\qquad\id_A\otimes\varepsilon\otimes\id_{\mathcal{B}^\square(A,V)}} \\
& A \otimes F \otimes \mathcal{B}^\square(A,V) & } $$

 The commutativity of the upper face and the left face is obvious. The commutativity of the right face
 follows from the definition of $\varepsilon$ and the commutativity of the rear face follows from the definition
 of $\Delta$. Therefore, the algebra homomorphisms forming the lower face become equal after their
 composition with $\rho_{A,V}$. 
 Now universal property of $\mathcal{B}^\square(A,V)$ implies the commutativity of the diagram
 $$\xymatrix{ \mathcal{B}^\square(A,V) \ar[rr]^{\Delta} \ar[rd]^{\sim} 
  & & \mathcal{B}^\square(A,V)\otimes \mathcal{B}^\square(A,V) \ar[ld]^{\qquad\varepsilon\otimes \id_{\mathcal{B}^\square(A,V)}}\\
 & F \otimes \mathcal{B}^\square(A,V)  & }$$
 Hence $\varepsilon$ satisfies the left counit axiom.
 
 Analogously, if we consider the diagram
  $$\xymatrix{ A \ar[rr]^{\rho_{A,V}}\ar[dd]_{\rho_{A,V}} \ar[rd]^{\sim} && A\otimes \mathcal{B}^\square(A,V)
  \ar[ld]^{\qquad\id_A\otimes \varepsilon} \ar[dd]^{\rho_{A,V} \otimes \id_{\mathcal{B}^\square(A,V)}} \\
  & A\otimes F \ar[dd]^(0.3){\rho_{A,V} \otimes \id_F} & \\
 A \otimes \mathcal{B}^\square(A,V)\ar[rd]^{\sim} \ar'[r][rr]_(0.3){\id_A\otimes \Delta} && A \otimes \mathcal{B}^\square(A,V) \otimes \mathcal{B}^\square(A,V) \ar[ld]^{\qquad\id_A\otimes\id_{\mathcal{B}^\square(A,V)}\otimes\varepsilon} \\
& A \otimes \mathcal{B}^\square(A,V) \otimes F & } $$
  we get the commutativity of the diagram
 $$\xymatrix{ \mathcal{B}^\square(A,V) \ar[rr]^{\Delta} \ar[rd]^{\sim} 
  & & \mathcal{B}^\square(A,V)\otimes \mathcal{B}^\square(A,V) \ar[ld]^{\qquad \id_{\mathcal{B}^\square(A,V)}\otimes\varepsilon}\\
 & \mathcal{B}^\square(A,V) \otimes  F & }$$
 Hence $\varepsilon$ satisfies the right counit axiom and $\mathcal{B}^\square(A,V)$ is indeed a bialgebra.
 
 Suppose $B$ is another bialgebra and $\rho \colon  A \to A\otimes B$ is a coaction
 such that $\cosupp \rho \subseteq V$.
 Denote by $\varphi \colon \mathcal{B}^\square(A,V) \to B$ the unique algebra homomorphism 
making the diagram~\eqref{EqTheoremBsquareBialgebra} commutative.
We claim that $\varphi$ is a bialgebra homomorphism.

Consider the diagram
$$\xymatrix{  A \ar@{=}[rr]\ar[dd]_{\rho_{A,V}}\ar[rd]^{\rho_{A,V}} && A \ar'[d][dd]_{\rho}  \ar[rd]^{\rho}& \\
& A \otimes \mathcal{B}^\square(A,V) \ar[rr]^(0.4){\id_A\otimes\varphi}\ar[dd]_(0.3){\rho_{A,V} \otimes \id_{\mathcal{B}^\square(A,V)}} && A\otimes B \ar[dd]^{\rho \otimes \id_{B}} \\
A \otimes \mathcal{B}^\square(A,V) \ar'[r][rr]^(0.3){\id_A\otimes\varphi}\ar[rd]^{\id_A \otimes \Delta} && A\otimes B \ar[rd]^{\id_A \otimes \Delta_B}& \\
& A \otimes \mathcal{B}^\square(A,V)\otimes \mathcal{B}^\square(A,V) \ar[rr]^{\id_A\otimes\varphi\otimes\varphi} & & A\otimes B\otimes B } $$
 where $\Delta_B \colon B \to B\otimes B$ is the comultiplication in $B$.
 The upper face and the rear face are commutative by the definition of $\varphi$. The front face is commutative since it coincides with the upper face (and the rear face) tensored  by $\varphi$ from the right.
 The left and the right faces are commutative since both $\rho$ and $\rho_{A,V}$ define on $A$ the structure
 of a right comodule. 
 
 Therefore, after the composition with $\rho_{A,V}$, the lower face becomes commutative too
 and the universal property of $\mathcal{B}^\square(A,V)$ implies the commutativity of the diagram
 $$\xymatrix{\mathcal{B}^\square(A,V)\ar[d]_{\Delta}  \ar[r]^{\varphi} & B \ar[d]^{\Delta_B}\\
   \mathcal{B}^\square(A,V) \otimes \mathcal{B}^\square(A,V)  \ar[r]_(0.7){\varphi\otimes\varphi} & B \otimes B   \\ 
  }$$ Therefore $\varphi$ preserves the comultiplication.
 
 Consider the diagram $$\xymatrix{ & A \ar[lddd]_{\rho} 
 \ar[dd]^(0.7){\rho_{A,V}}  \ar[rddd]^{\sim} & \\ & & \\
 &  A  \otimes \mathcal{B}^\square(A,V) 
 \ar[rd]_{\id_A\otimes\varepsilon\ } \ar[ld]^{\ \id_A \otimes \varphi}& \\
 A \otimes B \ar[rr]_{\id_A\otimes\varepsilon_B} & & A\otimes F }$$ where $\varepsilon_B$
 is the counit in $B$.
 
 The large triangle and the right triangle are commutative by the counitality of the comodule structures on $A$.
 The left triangle is commutative by the definition of $\varphi$. Hence the lower triangle becomes commutative
 after the composition with $\rho_{A,V}$. Thus the universal property of $\mathcal{B}^\square(A,V)$ implies the commutativity of the diagram
 $$\xymatrix{ \mathcal{B}^\square(A,V)\ar[rr]^{\varphi}\ar[rd]_{\varepsilon} & & B \ar[ld]^{\varepsilon_B}  \\
  & F   &}$$
 Therefore $\varphi$ preserves the counit and is indeed a bialgebra homomorphism.
\end{proof}

We call $\mathcal{B}^\square(A,V)$ the \textit{$V$-universal coacting bialgebra}.
\begin{example}\label{exUniv1}
1) Let $A$ be a unital associative algebra for which the bialgebra $\mathcal{B}^\square(A,\End_F(A))$ exists. It can be easily seen that  $\mathcal{B}^\square(A,\End_F(A))$ is exactly the Tambara universal coacting bialgebra $\alpha(A,A)$ (see~\cite{Tambara}) and we will denote it simply by $\mathcal{B}^\square(A)$. Note that if $A$ is finite dimensional, the conditions in Theorem~\ref{TheoremUnivComeasExistence} which ensure the existence of $\mathcal{B}^\square(A)$ are trivially fulfilled. 

2) Let $A=\bigoplus_{n\in \mathbb Z} A^{(n)}$ be an associative $\mathbb Z$-graded 
unital algebra such that $\dim A^{(n)} < +\infty$ for all $n\in\mathbb Z$.
Denote by $V\subseteq \End_F(A)$ 
the subalgebra of all linear operators preserving the grading.
Obviously, $V$
is pointwise finite dimensional. 
Moreover, if an operator $f_0\in\End_F(A)$ 
does not preserve the grading, for some $n\in\mathbb Z$, $a\in A^{(n)}$ and $b\notin A^{(n)}$
we have $f_0(a)=b$.
Note that the neighbourhood $\lbrace f\in \End_F(A) \mid f(a)=b \rbrace$ of the point $f_0$
in the finite topology has empty intersection with $V$.
Hence $V$ is closed in the finite topology and Theorem~\ref{TheoremUnivComeasExistence}
implies that there exists the bialgebra $\mathcal{B}^\square(A, V)$,
which is Yu.\,I.~Manin's universal coacting bialgebra $\underline{\mathrm{end}}(A)$~\cite{Manin}.

3) Let $C$ be a coalgebra for which the bialgebra $\mathcal{B}^\square(C,\End_F(C))$ exists. In this case we denote the bialgebra $\mathcal{B}^\square(C,\End_F(C))$ simply by  $\mathcal{B}^{\sqbullet}(C)$ and we call it the \textit{universal coacting bialgebra of $C$}. As in the previous example, if $C$ is finite dimensional the conditions in Theorem~\ref{TheoremUnivComeasExistence} which ensure the existence of $\mathcal{B}^{\sqbullet}(C)$ are trivially fulfilled.  
\end{example}

Consider now Hopf algebra coactions.

 Recall that the embedding functor $\mathbf{Hopf}_F \to \mathbf{Bialg}_F$
has a left adjoint functor $H_l \colon \mathbf{Bialg}_F \to \mathbf{Hopf}_F$.
(See~\cite[Theorem 2.6.3]{Pareigis} or~\cite{Takeuchi}.) Let $\mathcal{H}^\square(A,V) := H_l(\mathcal{B}^\square(A,V))$.
Define the coaction $\rho^{\mathbf{Hopf}}_{A,V} \colon A \to A \otimes \mathcal{H}^\square(A,V)$
 as the composition of $\rho_{A,V}$ and the unit $\mathcal{B}^\square(A,V) \to \mathcal{H}^\square(A,V)$ of the adjunction tensored by $\id_A$.
Then for any Hopf algebra $H$ and any coaction $\rho \colon A \to A \otimes H$ with $\cosupp \rho \subseteq V$ there exists the unique Hopf algebra homomorphism $\varphi$
making the diagram below commutative:
$$\xymatrix{ A \ar[rd]_\rho \ar[r]^(0.3){\rho^{\mathbf{Hopf}}_{A,V}} & A \otimes \mathcal{H}^\square(A,V)
\ar@{-->}[d]^{\id_A \otimes \varphi} \\
& A \otimes H }$$
 We call $\mathcal{H}^\square(A,V)$ the \textit{$V$-universal coacting Hopf algebra}.
 
\begin{example}\label{exUniv2}
1) If $A$ is a unital associative algebra for which the bialgebra $\mathcal{B}^\square(A)$ exists, the Hopf algebra $\mathcal{H}^\square(A,\End_F(A))$ is exactly the Tambara universal coacting Hopf algebra and we denote it by $\mathcal{H}^\square(A)$.

2) Let $A=\bigoplus_{n\in \mathbb Z} A^{(n)}$ be an associative $\mathbb Z$-graded 
unital algebra such that $\dim A^{(n)} < +\infty$ for all $n\in\mathbb Z$.
Denote by $V\subseteq \End_F(A)$ 
the subalgebra of all linear operators preserving the grading.
As we noted in Example~\ref{exUniv1}, 2),
the space $V$ is pointwise finite dimensional and closed in the finite topology. Hence Theorem~\ref{TheoremUnivComeasExistence}
implies that there exists the Hopf algebra $\mathcal{H}^\square(A, V)$,
which is Yu.\,I.~Manin's universal coacting Hopf algebra $\underline{\mathrm{aut}}(A)$~\cite{Manin}. 

3) Let $C$ be a coalgebra for which the bialgebra $\mathcal{B}^{\sqbullet}(C)$ exists. In this case we denote the Hopf algebra $\mathcal{H}^\square(C,\End_F(C))$ simply by $\mathcal{H}^{\sqbullet}(C)$ and we call it the \textit{universal coacting Hopf algebra of $C$}. It is straightforward to see that  $\mathcal{H}^{\sqbullet}(C)$ is the initial object in the category whose objects are all comodule coalgebra structures $\psi_{H} \colon C \to C \otimes H$ and the morphisms between two such objects $\psi_{H_{1}} \colon C \to C \otimes H_{1}$ and $\psi_{H_{2}} \colon C \to C \otimes H_{2}$ are Hopf algebra homomorphisms $f \colon H_{1} \to H_{2}$ which make the following diagram commutative:
$$
\xymatrix{ C \ar[rr]^-{\rho_{H_{1}}}\ar[rrd]_{\rho_{H_{2}}}   & {} & C \otimes H_{1}\ar[d]^{\mathrm{id}_{C} \otimes  f}\\
{} & {} & {C \otimes H_{2}}}
$$
 
4) If $\rho \colon  A \to A \otimes H$ is some Hopf algebra coaction on an ordinary algebra $A$,
 then $\mathcal{H}^\square(A,\cosupp \rho)$ is exactly the universal Hopf algebra of $\rho$
 introduced in~\cite{AGV1}. Indeed, by the definition of $\mathcal{H}^\square(A,\cosupp \rho)$
 we have $\cosupp\left(\rho^{\mathbf{Hopf}}_{A,\cosupp \rho}\right) \subseteq 
 \cosupp \rho$, the coaction   $\rho^{\mathbf{Hopf}}_{A,\cosupp \rho}$ is universal among all coactions equivalent to $\rho$ and therefore $\cosupp \rho\subseteq\cosupp \left(\rho^{\mathbf{Hopf}}_{A,\cosupp \rho}\right)$. Hence $\cosupp\left(\rho^{\mathbf{Hopf}}_{A,\cosupp \rho}\right)=\cosupp \rho$.
 \end{example}
 
 \begin{theorem}\label{TheoremBHsquareVExistence}
Let $A$ be an $\Omega$-algebra and let $V\subseteq \End_F(A)$ be a unital subalgebra
closed in the finite topology.
Then $\mathcal{B}^\square(A,V)$ exists if and only if the subalgebra generated by $\cosupp\rho$
for all bialgebras $B$ and coactions $\rho \colon  A \to A\otimes B$
 such that $\cosupp \rho \subseteq V$, is pointwise finite dimensional.
 Analogously,  $\mathcal{H}^\square(A,V)$ exists if and only if the subalgebra generated by $\cosupp\rho$
for all Hopf algebras $H$ and coactions $\rho \colon  A \to A\otimes H$
 such that $\cosupp \rho \subseteq V$, is pointwise finite dimensional.
 \end{theorem}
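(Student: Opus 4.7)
The plan is to handle the bialgebra case first, then reduce the Hopf algebra case to it via the left adjoint $H_l$. For the ``only if'' direction on the bialgebra side, I would argue as follows. If $\mathcal{B}^\square(A,V)$ exists, then by the universal property \eqref{EqTheoremBsquareBialgebra} every coaction $\rho\colon A\to A\otimes B$ with $\cosupp\rho\subseteq V$ factors through the universal coaction $\rho_{A,V}$, forcing $\cosupp\rho\subseteq\cosupp\rho_{A,V}$. Since $\rho_{A,V}$ is itself such a coaction (as established in the course of the proof of Theorem~\ref{TheoremBsquareBialgebra}), and $\cosupp\rho_{A,V}$ is precisely the image of the unital algebra map $\mathcal{B}^\square(A,V)^*\to \End_F(A)$ dual to $\rho_{A,V}$, hence a unital subalgebra of $\End_F(A)$, the subalgebra generated by all cosupports of bialgebra coactions into $V$ coincides with $\cosupp\rho_{A,V}$. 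Lemma~\ref{LemmaDualizationPointwiseFinDim} then yields pointwise finite dimensionality.

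For the ``if'' direction, let $W$ denote the subalgebra generated by all such cosupports, and assume $W$ is pointwise finite dimensional. The key reduction step is to replace $W$ by its closure $\overline W$ in the finite topology on $\End_F(A)$. I would first observe that $\overline W$ remains a unital subalgebra: separate continuity of composition in the finite topology (which follows readily from the definition of the basic open sets $W_{a_1,\ldots,a_n;b_1,\ldots,b_n}$) ensures that the closure of a unital subalgebra is a unital subalgebra. Lemma~\ref{LemmaClosurePwFD} guarantees that $\overline W$ is pointwise finite dimensional, and $\overline W\subseteq V$ since $V$ is itself closed in the finite topology by hypothesis. Hence Theorem~\ref{TheoremUnivComeasExistence} produces the universal comeasuring $\mathcal{A}^\square(A,A,\overline W)$, and Theorem~\ref{TheoremBsquareBialgebra} upgrades it to a bialgebra $\mathcal{B}^\square(A,\overline W)$.

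I would then verify that $\mathcal{B}^\square(A,\overline W)$ is in fact the sought $\mathcal{B}^\square(A,V)$. The coaction $\rho_{A,\overline W}$ has cosupport inside $\overline W\subseteq V$, so it is a valid candidate. Conversely, any coaction $\rho\colon A\to A\otimes B$ with $\cosupp\rho\subseteq V$ automatically satisfies $\cosupp\rho\subseteq W\subseteq \overline W$ by the very definition of $W$, so the universal property of $\mathcal{B}^\square(A,\overline W)$ supplies the required unique bialgebra map $\mathcal{B}^\square(A,\overline W)\to B$. This identifies $\mathcal{B}^\square(A,V)$ with $\mathcal{B}^\square(A,\overline W)$.

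The Hopf algebra statement follows by essentially repeating this dichotomy with the restricted class of Hopf coactions. For ``only if'', $\cosupp \rho^{\mathbf{Hopf}}_{A,V}$ is the image of the unital algebra map $\mathcal{H}^\square(A,V)^*\to\End_F(A)$ and contains every $\cosupp\rho$ for Hopf coactions $\rho$ with $\cosupp\rho\subseteq V$, so Lemma~\ref{LemmaDualizationPointwiseFinDim} again concludes. For ``if'', letting $W_H$ denote the subalgebra generated by the cosupports of all Hopf coactions into $V$ and assuming $W_H$ is pointwise finite dimensional, the previous paragraphs applied to $\overline{W_H}$ yield the bialgebra $\mathcal{B}^\square(A,\overline{W_H})$; one then sets $\mathcal{H}^\square(A,V):=H_l(\mathcal{B}^\square(A,\overline{W_H}))$ and checks its universal property exactly as in the bialgebra case, using that any Hopf coaction with cosupport in $V$ has cosupport in $W_H\subseteq\overline{W_H}$. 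The main obstacle I anticipate is the verification that closing the subalgebra $W$ in the finite topology is compatible with the rest of the machinery, namely that $\overline W$ remains a unital subalgebra and that every coaction into $V$ genuinely has cosupport in $W$ (not merely in $V$); once this bookkeeping is in place, the theorem reduces to a direct application of Theorems~\ref{TheoremUnivComeasExistence} and~\ref{TheoremBsquareBialgebra}.
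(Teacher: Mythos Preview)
Your proposal is correct and follows essentially the same route as the paper's proof: for necessity use that every admissible coaction's cosupport lies inside $\cosupp\rho_{A,V}$, and for sufficiency pass to the closure $\overline W$, invoke Lemma~\ref{LemmaClosurePwFD} and Theorem~\ref{TheoremUnivComeasExistence}, and identify $\mathcal{B}^\square(A,V)=\mathcal{B}^\square(A,\overline W)$. You even make explicit a point the paper leaves tacit, namely that $\overline W$ is again a unital subalgebra (needed for Theorem~\ref{TheoremBsquareBialgebra} to apply), and you spell out the Hopf case via $H_l(\mathcal{B}^\square(A,\overline{W_H}))$ where the paper simply declares the argument ``identical''.
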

\begin{proof} The proof is identical for both cases. Therefore without loss of generality we may treat just the case of bialgebras.

The necessity is obvious as if $\mathcal{B}^\square(A,V)$ indeed exists, we have $\cosupp \rho \subseteq \cosupp \rho_{A,V}$ for all bialgebras $B$ and coactions $\rho \colon  A \to A\otimes B$
 such that $\cosupp \rho \subseteq V$.

Let $W \subseteq \End_F(A)$ be the subalgebra generated by all such $\cosupp\rho\subseteq V$.
If $W$ is pointwise finite dimensional, then by Lemma~\ref{LemmaClosurePwFD}
its closure $\overline W$ in the finite topology is pointwise finite dimensional too
and by Theorem~\ref{TheoremUnivComeasExistence} there exists $\mathcal{B}^\square(A,\overline W)$.
As $\overline W \subseteq V$, we have $\mathcal{B}^\square(A,V)=\mathcal{B}^\square(A,\overline W)$.
\end{proof} 
\begin{corollary}\label{CorollaryManinExistence}
The Tambara universal coacting Hopf algebra $\mathcal{H}^\square(A,\End_F(A))$ exists for an $\Omega$-algebra $A$ if
and only if the subalgebra generated by $\cosupp\rho$
for all Hopf algebras $H$ and all coactions $\rho \colon  A \to A\otimes H$ is pointwise finite dimensional.
\end{corollary}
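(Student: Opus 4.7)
The plan is to derive this corollary directly from Theorem~\ref{TheoremBHsquareVExistence} by specializing to $V = \End_F(A)$. First I would verify that the two hypotheses of the theorem are satisfied in this case: namely, $\End_F(A)$ is obviously a unital subalgebra of itself, and it is trivially closed in the finite topology on $\End_F(A)$ since it is the whole ambient space. So the theorem applies with $V = \End_F(A)$.

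Next, I would observe that for every Hopf algebra $H$ and every coaction $\rho \colon A \to A \otimes H$, the cosupport $\cosupp \rho$ is automatically a subspace of $\End_F(A) = V$, so the side condition ``$\cosupp \rho \subseteq V$'' in Theorem~\ref{TheoremBHsquareVExistence} becomes vacuous. Hence the subalgebra of $\End_F(A)$ generated by $\cosupp \rho$ as $(H,\rho)$ ranges over all Hopf algebras and all coactions $\rho \colon A \to A \otimes H$ such that $\cosupp \rho \subseteq V$ coincides with the subalgebra generated by $\cosupp \rho$ for all Hopf algebras $H$ and all coactions $\rho \colon A \to A \otimes H$ whatsoever.

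Finally, I would note that by definition the Tambara universal coacting Hopf algebra is $\mathcal{H}^\square(A,\End_F(A))$ (see Example~\ref{exUniv2}, 1)). Combining the previous two observations with the Hopf-algebra part of Theorem~\ref{TheoremBHsquareVExistence} yields: $\mathcal{H}^\square(A,\End_F(A))$ exists if and only if the subalgebra generated by $\cosupp \rho$ over all Hopf algebras $H$ and coactions $\rho \colon A \to A \otimes H$ is pointwise finite dimensional, which is exactly the statement of the corollary. There is no real obstacle here: the work was done in Theorem~\ref{TheoremBHsquareVExistence}, and the corollary is the formal specialization to the maximal choice of $V$.
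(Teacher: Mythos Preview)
Your proposal is correct and is exactly the intended argument: the paper states the corollary immediately after Theorem~\ref{TheoremBHsquareVExistence} without a separate proof, precisely because it is the specialization $V=\End_F(A)$, for which the hypotheses (unital subalgebra, closed in the finite topology) are trivially satisfied and the condition $\cosupp\rho\subseteq V$ is vacuous.
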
 

Proposition~\ref{PropositionHopfCoactionAlgebraUnital} below is dual 
to~\cite[Proposition~4.3]{AGV1}.

\begin{proposition}\label{PropositionHopfCoactionAlgebraUnital}
Let $A$ be a unital algebra and let $\rho \colon A \to A \otimes H$
be a coaction of a Hopf algebra $H$.
Suppose $\rho(1_A)=1_A \otimes h$ for some $h\in H$.
Then $h=1_H$, i.e. the coaction $\rho$ is \textit{unital}.
\end{proposition}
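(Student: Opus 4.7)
The plan is to extract three constraints on $h$ from the hypotheses---that $h$ is idempotent, grouplike, and has counit $1$---and then to use the antipode of $H$ to deduce that $h$ must equal $1_H$. Nothing here is subtle; the only point worth being careful about is making sure to invoke both the comeasuring axiom (multiplicativity of $\rho$) and the comodule axioms (coassociativity and counitality) together with the existence of the antipode.

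First, since $\rho$ is a coaction on the $\{\mu\}$-algebra $A$, the map $\rho$ is multiplicative, and hence
\[
1_A \otimes h \;=\; \rho(1_A) \;=\; \rho(1_A \cdot 1_A) \;=\; \rho(1_A) \cdot \rho(1_A) \;=\; 1_A \otimes h^2,
\]
so that $h^2 = h$. Next, coassociativity of $\rho$ gives
\[
1_A \otimes h \otimes h \;=\; (\rho \otimes \id_H) \rho(1_A) \;=\; (\id_A \otimes \Delta) \rho(1_A) \;=\; 1_A \otimes \Delta(h),
\]
so $\Delta(h) = h \otimes h$. Finally, applying the counit axiom $(\id_A \otimes \varepsilon)\rho = \id_A$ to $1_A$ gives $\varepsilon(h) 1_A = 1_A$, whence $\varepsilon(h) = 1$. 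Thus $h$ is a grouplike element of $H$.

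Now I would use the antipode $S$ of $H$. From $\Delta(h) = h \otimes h$ and $\varepsilon(h) = 1$, the antipode axiom $\mu(\id_H \otimes S)\Delta = u_H \varepsilon$ evaluated at $h$ gives $h \cdot S(h) = 1_H$, and symmetrically $S(h) \cdot h = 1_H$, so $h$ is invertible in $H$ with inverse $S(h)$. Multiplying the identity $h^2 = h$ on either side by $S(h)$ then yields $h = 1_H$, as desired.

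There is no real obstacle: the Hopf algebra structure is precisely what promotes a grouplike idempotent to the unit. It is worth noting, as a sanity check, that both ingredients are essential---without the antipode (i.e., in a mere bialgebra) grouplike idempotents need not be trivial, and without the grouplike property multiplicativity alone only yields idempotence of $h$.
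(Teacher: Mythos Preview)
Your proof is correct and follows essentially the same argument as the paper's: derive that $h$ is a grouplike idempotent from the comodule and comeasuring axioms, then use the antipode to conclude $h$ is invertible and hence equal to $1_H$. The paper's version is merely more terse.
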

\begin{proof} Since $\rho$ is a coaction, we have $\varepsilon(h)=1_F$ and
$\Delta(h) = h \otimes h$. Considering $1_A^2=1_A$, we get
$h^2=h$. Since $(Sh)h=h(Sh)=1_H$, the element $h$ is invertible.
Now $h^2=h$ implies $h=1_H$.
\end{proof}

Theorem~\ref{TheoremHopfCoactionAlgebraUnital} below shows that when $V$ arises from some unital coaction,
there is no difference whether we include the unit map to $\Omega$ or not.

\begin{theorem}\label{TheoremHopfCoactionAlgebraUnital}
Let $A$ be a unital algebra and let $V\subseteq \End_F(A)$
be a unital subalgebra such that there exists $\mathcal{H}^\square(A,V)$.
Suppose $V 1_A = F 1_A$. (E.g., $V=\cosupp \rho$ for some unital coaction $\rho$.)
Then $\mathcal{H}^\square(A,V)$ does not depend on whether one regards $A$ as a $\lbrace \mu \rbrace$-algebra or as a $\lbrace \mu, u \rbrace$-algebra.
\end{theorem}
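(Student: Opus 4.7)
The plan mirrors the proof of Theorem~\ref{TheoremHopfActionAlgebraUnital}: it suffices to show that the universal coacting Hopf algebra constructed by treating $A$ as a $\lbrace \mu \rbrace$-algebra, which we temporarily denote $\mathcal{H}^\square_{\lbrace\mu\rbrace}(A,V)$ with universal coaction $\rho^{\mathbf{Hopf}}_{A,V}$, automatically preserves the unit map $u$. Once this is established, the two universal properties (relative to $\lbrace\mu\rbrace$-coactions and $\lbrace\mu,u\rbrace$-coactions respectively) will be comparable in both directions and standard uniqueness arguments yield the desired Hopf algebra isomorphism.

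The first step is to show that $\rho^{\mathbf{Hopf}}_{A,V}(1_A)$ has the form $1_A \otimes h$ for some $h \in \mathcal{H}^\square_{\lbrace\mu\rbrace}(A,V)$. Write $\rho^{\mathbf{Hopf}}_{A,V}(1_A) = \sum_i a_i \otimes h_i$ with the $h_i$ linearly independent in $\mathcal{H}^\square_{\lbrace\mu\rbrace}(A,V)$. Recall from Section~\ref{SectionCoactions} that $\cosupp \rho^{\mathbf{Hopf}}_{A,V} \subseteq V$. For every $h^* \in \mathcal{H}^\square_{\lbrace\mu\rbrace}(A,V)^*$ the operator $a \mapsto h^*(a_{(1)}) a_{(0)}$ belongs to this cosupport and hence to $V$; applied at $1_A$ it yields $\sum_i h^*(h_i)\, a_i \in V\, 1_A = F\, 1_A$ by hypothesis. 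Choosing functionals $h^*$ that pick out each $h_i$ individually forces $a_i \in F\, 1_A$ for every $i$, and collecting scalars we obtain $\rho^{\mathbf{Hopf}}_{A,V}(1_A) = 1_A \otimes h$ for some $h$.

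Since $\rho^{\mathbf{Hopf}}_{A,V}$ is by construction a Hopf algebra coaction that preserves the multiplication $\mu$, Proposition~\ref{PropositionHopfCoactionAlgebraUnital} applies directly and gives $h = 1_{\mathcal{H}^\square_{\lbrace\mu\rbrace}(A,V)}$. Therefore $\rho^{\mathbf{Hopf}}_{A,V}$ is automatically a $\lbrace\mu,u\rbrace$-coaction.

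Finally, to deduce the identification of the two universal Hopf algebras, note that every $\lbrace\mu,u\rbrace$-coaction with cosupport in $V$ is a fortiori a $\lbrace\mu\rbrace$-coaction with cosupport in $V$, and conversely by the previous paragraph the universal $\lbrace\mu\rbrace$-coaction $\rho^{\mathbf{Hopf}}_{A,V}$ is in fact a $\lbrace\mu,u\rbrace$-coaction. The two universal properties then produce mutually inverse Hopf algebra homomorphisms between $\mathcal{H}^\square_{\lbrace\mu\rbrace}(A,V)$ and $\mathcal{H}^\square_{\lbrace\mu,u\rbrace}(A,V)$, their composites being the identities by uniqueness. The main subtlety is the separation argument in step one: one must be careful that the linear independence of the $h_i$ inside the infinite-dimensional Hopf algebra $\mathcal{H}^\square_{\lbrace\mu\rbrace}(A,V)$ genuinely allows one to detect each tensor factor via linear functionals, but this is just the standard fact that any linearly independent finite set in a vector space admits a dual system.
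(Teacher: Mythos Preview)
Your proof is correct and follows essentially the same approach as the paper: start from the $\lbrace\mu\rbrace$-universal coaction, use $V\,1_A = F\,1_A$ to force $\rho^{\mathbf{Hopf}}_{A,V}(1_A)=1_A\otimes h$, then invoke Proposition~\ref{PropositionHopfCoactionAlgebraUnital}. The paper's proof is a two-line sketch that asserts the first step without justification; your separation argument via dual functionals and the explicit comparison of the two universal properties are exactly the details the paper omits.
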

\begin{proof} First regard $A$ as a $\lbrace \mu \rbrace$-algebra.
The equality $V 1_A = F 1_A$ implies 
$\rho^\mathbf{Hopf}_{A,V}(1_A)=1_A \otimes h$ for some $h\in \mathcal{H}^\square(A,V)$.
By Proposition~\ref{PropositionHopfCoactionAlgebraUnital}, $\rho^\mathbf{Hopf}_{A,V}$ is a unital
coaction and $\mathcal{H}^\square(A,V)$ is a $V$-universal coacting Hopf algebra on $A$ as a $\lbrace \mu, u \rbrace$-algebra too.
\end{proof}

\subsection{Duality between actions and coactions}\label{SectionDualityActionsCoactions}

It turns out that in the case of (co)actions the coalgebra isomorphism $\theta$
from Theorem~\ref{TheoremUniv(Co)measDuality} is in fact a bialgebra isomorphism.

\begin{theorem}\label{TheoremUniv(Co)actDuality}
Let $A$ be an $\Omega$-algebra and let $V\subseteq \End_F(A)$ be a unital pointwise finite dimensional subalgebra  closed in the finite topology.
If $\theta$ is the unique bialgebra homomorphism making
the diagram
$$\xymatrix{ {}_\square \mathcal{B}(A,V) \otimes A \ar[dr]^(0.6){\psi_{A,V}} & \\
                   &   A \\
\mathcal{B}^\square(A,V)^\circ \otimes A \ar[ru]_(0.6){\widetilde{\rho_{A,V}}}\ar@{-->}[uu]^{\theta \otimes \id_A} } $$  commutative,
then $\theta$ is a bialgebra isomorphism. In particular, for any finite dimensional algebra $A$ and any finite dimensional coalgebra $C$ we have the following bialgebra isomorphisms:
$$
{}_{\square}\mathcal{B}(A) \cong \mathcal{B}^{\square}(A)^{\circ}, \qquad{}_{\sqbullet}\mathcal{B}(C) \cong \mathcal{B}^{\sqbullet}(C)^{\circ}.
$$
\end{theorem}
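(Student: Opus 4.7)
The plan is to bootstrap Theorem~\ref{TheoremUniv(Co)measDuality}: that result already produces $\theta \colon \mathcal{B}^\square(A,V)^\circ \to {}_\square\mathcal{B}(A,V)$ as a coalgebra isomorphism (the hypotheses on $V$ transfer directly, with $B = A$), so it suffices to upgrade $\theta$ to an algebra homomorphism; being bijective, it will then automatically be a bialgebra isomorphism.

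The crux is the observation that $\widetilde{\rho_{A,V}} \colon \mathcal{B}^\square(A,V)^\circ \otimes A \to A$ is not merely a measuring (as Lemma~\ref{LemmaMeasComeasBijection} delivers) but an actual \emph{action} of the finite-dual bialgebra $\mathcal{B}^\square(A,V)^\circ$ on the $\Omega$-algebra $A$. Indeed, by Theorem~\ref{TheoremBsquareBialgebra} $\rho_{A,V}$ is a comodule $\Omega$-algebra structure over the bialgebra $\mathcal{B}^\square(A,V)$, and the standard duality between $B$-comodule algebras and $B^\circ$-module algebras turns the coassociativity and counitality of $\rho_{A,V}$ into the associativity and unitality of $\widetilde{\rho_{A,V}}$, while the compatibility with $\Omega$ (i.e.\ the measuring property) already follows from the comeasuring property of $\rho_{A,V}$ via Lemma~\ref{LemmaMeasComeasBijection}.

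Having recognized $\widetilde{\rho_{A,V}}$ as an action with $\cosupp \widetilde{\rho_{A,V}} \subseteq V$, I would invoke the universal property of ${}_\square\mathcal{B}(A,V)$ from Theorem~\ref{TheoremsquareBBialgebra} to obtain a unique \emph{bialgebra} homomorphism $\mathcal{B}^\square(A,V)^\circ \to {}_\square\mathcal{B}(A,V)$ through which $\widetilde{\rho_{A,V}}$ factors via $\psi_{A,V}$. By the uniqueness of the factoring coalgebra homomorphism in Theorem~\ref{TheoremUniv(Co)measDuality}, this bialgebra homomorphism must coincide with $\theta$; hence $\theta$ itself is a bialgebra homomorphism and, being bijective, a bialgebra isomorphism. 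The two displayed consequences then follow immediately: for finite dimensional $A$ (resp.\ $C$) the finite topology on $\End_F(A)$ (resp.\ $\End_F(C)$) is discrete, so $V = \End_F(A)$ (resp.\ $V = \End_F(C)$) is trivially closed and pointwise finite dimensional, and one may apply the general statement with these choices.

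The main technical obstacle is the verification that $\widetilde{\rho_{A,V}}$ obeys the module axioms; although entirely classical in Hopf-algebraic terms, in the generality of $\Omega$-algebras one must trace through the duality between the comultiplication of $\mathcal{B}^\square(A,V)$ and the multiplication induced on $\mathcal{B}^\square(A,V)^\circ$ to see that the coassociativity of $\rho_{A,V}$ translates into the associativity of the action, and similarly counitality into unitality. This is bookkeeping-heavy Sweedler-notation computation and constitutes the only real computational content of the argument; everything else is a straightforward application of universal properties already established in the paper.
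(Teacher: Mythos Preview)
Your proposal is correct and follows essentially the same approach as the paper: invoke Theorem~\ref{TheoremsquareBBialgebra} (after recognizing $\widetilde{\rho_{A,V}}$ as a genuine action of $\mathcal{B}^\square(A,V)^\circ$) to obtain $\theta$ as a bialgebra homomorphism, and Theorem~\ref{TheoremUniv(Co)measDuality} to conclude it is bijective. The paper's own proof is just the two-line version of your argument, leaving the verification that $\widetilde{\rho_{A,V}}$ is an action implicit.
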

\begin{proof}
The existence of $\theta$ follows from Theorem~\ref{TheoremsquareBBialgebra}. By
Theorem~\ref{TheoremUniv(Co)measDuality} $\theta$ is bijective.
\end{proof}

The same is true for $V$-universal Hopf algebras.

\begin{theorem}\label{TheoremUnivHopf(Co)actDuality}
Let $A$ be an $\Omega$-algebra and let $V\subseteq \End_F(A)$ be a unital pointwise finite dimensional subalgebra  closed in the finite topology.
Then the unique homomorphism $\theta^\mathbf{Hopf} \colon \mathcal{H}^\square(A,V)^\circ \to {}_\square \mathcal{H}(A,V) $ of Hopf algebras  making
the diagram $$\xymatrix{ {}_\square \mathcal{H}(A,V) \otimes A \ar[dr]^(0.6){\psi^\mathbf{Hopf}_{A,V}} & \\
                   &   A \\
\mathcal{H}^\square(A,V)^\circ \otimes A \ar[ru]_(0.6){\widetilde{\rho^\mathbf{Hopf}_{A,V}}}\ar@{-->}[uu]^{\theta^\mathbf{Hopf} \otimes \id_A} } $$  commutative is an isomorphism. In particular, for any finite dimensional algebra $A$ and any finite dimensional coalgebra $C$ we have the following Hopf algebra isomorphisms:
$$
{}_{\square}\mathcal{H}(A) \cong \mathcal{H}^{\square}(A)^{\circ}, \qquad{}_{\sqbullet}\mathcal{H}(C) \cong \mathcal{H}^{\sqbullet}(C)^{\circ}.
$$
\end{theorem}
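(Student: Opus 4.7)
The plan is to lift the bialgebra isomorphism $\theta\colon \mathcal{B}^\square(A,V)^\circ \to {}_\square\mathcal{B}(A,V)$ of Theorem~\ref{TheoremUniv(Co)actDuality} to the Hopf-algebra level, leveraging the two functors $H_l,H_r\colon\mathbf{Bialg}_F\to\mathbf{Hopf}_F$ adjoint to the inclusion $\mathbf{Hopf}_F\hookrightarrow\mathbf{Bialg}_F$, together with the fact that the finite-dual functor $(-)^\circ$ sends Hopf algebras to Hopf algebras. Applying $H_r$ directly to $\theta$ yields a Hopf-algebra isomorphism
\[
H_r(\theta)\colon H_r\!\left(\mathcal{B}^\square(A,V)^\circ\right) \xrightarrow{\;\sim\;} H_r\!\left({}_\square\mathcal{B}(A,V)\right) = {}_\square\mathcal{H}(A,V),
\]
so the task reduces to producing a natural Hopf-algebra isomorphism $\mathcal{H}^\square(A,V)^\circ \cong H_r(\mathcal{B}^\square(A,V)^\circ)$.

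For this identification I would argue via Yoneda: for any Hopf algebra $K$, chaining together the defining adjunction of $H_r$, the symmetry of bialgebra pairings $\mathbf{Bialg}_F(K,B^\circ)\cong\mathbf{Bialg}_F(B,K^\circ)$, the universal property of $H_l$ applied with the Hopf algebra $K^\circ$ as target, and the symmetry of Hopf pairings, produces the following natural bijections:
\begin{align*}
\mathbf{Hopf}_F\!\left(K, H_r(\mathcal{B}^\square(A,V)^\circ)\right)
&\cong \mathbf{Bialg}_F\!\left(K, \mathcal{B}^\square(A,V)^\circ\right) \\
&\cong \mathbf{Bialg}_F\!\left(\mathcal{B}^\square(A,V), K^\circ\right) \\
&\cong \mathbf{Hopf}_F\!\left(\mathcal{H}^\square(A,V), K^\circ\right) \\
&\cong \mathbf{Hopf}_F\!\left(K, \mathcal{H}^\square(A,V)^\circ\right).
\end{align*}
Composing the resulting Hopf-algebra isomorphism $\mathcal{H}^\square(A,V)^\circ\cong H_r(\mathcal{B}^\square(A,V)^\circ)$ with $H_r(\theta)$ produces a Hopf-algebra isomorphism $\mathcal{H}^\square(A,V)^\circ\to{}_\square\mathcal{H}(A,V)$. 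Unpacking the construction shows that this isomorphism makes the diagram in the statement commute, so by the uniqueness clause of the theorem it coincides with $\theta^\mathbf{Hopf}$.

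The main obstacle will be verifying the symmetry-of-pairings bijections $\mathbf{Bialg}_F(K,B^\circ)\cong\mathbf{Bialg}_F(B,K^\circ)$ and its Hopf counterpart. Concretely, given $\varphi\colon K\to B^\circ$, one needs to show that the transposed linear map $B\to K^*$, $b\mapsto \bigl(k\mapsto\varphi(k)(b)\bigr)$, actually lands in $K^\circ$; this is a local finiteness statement that exploits the algebra and coalgebra compatibilities of $\varphi$ together with the fact that every element of $B^\circ$ generates a finite-dimensional subcoalgebra. All the hypotheses on $V$ (pointwise finite-dimensional and closed in the finite topology) are already absorbed in Theorem~\ref{TheoremUniv(Co)actDuality} in the construction of $\theta$, so no further use of these hypotheses is needed in the Hopf-level argument itself.
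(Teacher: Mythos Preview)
Your proposal is correct and follows essentially the same route as the paper. The paper runs the Yoneda argument as a single chain of natural bijections
\[
\mathbf{Hopf}_F(H, {}_\square\mathcal{H}(A,V)) \cong \mathbf{Bialg}_F(H, {}_\square\mathcal{B}(A,V)) \cong \mathbf{Bialg}_F(H, \mathcal{B}^\square(A,V)^\circ) \cong \cdots \cong \mathbf{Hopf}_F(H, \mathcal{H}^\square(A,V)^\circ),
\]
whereas you split off the step coming from $\theta$ as $H_r(\theta)$ and then run the remaining chain to identify $\mathcal{H}^\square(A,V)^\circ$ with $H_r(\mathcal{B}^\square(A,V)^\circ)$; the ingredients (the $H_r$/$H_l$ adjunctions and the pairing symmetry $\mathbf{Bialg}_F(K,B^\circ)\cong\mathbf{Bialg}_F(B,K^\circ)$) are identical, and the paper likewise uses the pairing symmetry without further comment.
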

\begin{proof}Let $H$ be an arbitrary Hopf algebra.
Consider the natural bijections
\begin{equation*}
\begin{split}
\mathbf{Hopf}_F(H, {}_\square \mathcal{H}(A,V)) = 
\mathbf{Hopf}_F(H, H_r({}_\square \mathcal{B}(A,V)))\cong
\mathbf{Bialg}_F(H, {}_\square \mathcal{B}(A,V)) \\
\cong \mathbf{Bialg}_F(H, \mathcal{B}^\square(A,V)^\circ)
\cong \mathbf{Bialg}_F(\mathcal{B}^\square(A,V), H^\circ)
\cong \mathbf{Hopf}_F(H_l(\mathcal{B}^\square(A,V)), H^\circ) \\
= \mathbf{Hopf}_F(\mathcal{H}^\square(A,V), H^\circ)
\cong \mathbf{Hopf}_F(H, \mathcal{H}^\square(A,V)^\circ).\end{split}\end{equation*}
Hence $\mathcal{H}^\square(A,V)^\circ \cong {}_\square \mathcal{H}(A,V)$
under the isomorphism that corresponds to $$\id_{\mathcal{H}^\square(A,V)^\circ}
\in \mathbf{Hopf}_F(\mathcal{H}^\square(A,V)^\circ, \mathcal{H}^\square(A,V)^\circ)$$
if we take $H=\mathcal{H}^\square(A,V)^\circ$.
But the corresponding element of $\mathbf{Hopf}_F(\mathcal{H}^\square(A,V)^\circ, {}_\square \mathcal{H}(A,V))$ is precisely $\theta^\mathbf{Hopf}$ resulting from the universal properties 
of ${}_\square \mathcal{B}(A,V)$ and ${}_\square \mathcal{H}(A,V)$:
$$\xymatrix{ {}_\square \mathcal{H}(A,V) \otimes A \ar[r] \ar@/^5pc/[rrd]^(0.6){\psi^\mathbf{Hopf}_{A,V}} &
{}_\square \mathcal{B}(A,V) \otimes A \ar[rd]^{\psi_{A,V}} & \\
                 &  &   A \\
\mathcal{H}^\square(A,V)^\circ \otimes A \ar[r] \ar@/_5pc/[rru]_(0.6){\widetilde{\rho^\mathbf{Hopf}_{A,V}}}\ar@{-->}[uu]^{\theta^\mathbf{Hopf} \otimes \id_A} &  \mathcal{B}^\square(A,V)^\circ \otimes A
\ar[uu]^{\theta \otimes \id_A}
 \ar[ru]_(0.6){\widetilde{\rho_{A,V}}} & }$$
The uniqueness of $\theta^\mathbf{Hopf}$ follows from the universal property of ${}_\square \mathcal{H}(A,V)$ too.
\end{proof}

\subsection{Duality between $\Omega$- and $\Omega^*$-algebras}\label{SectionDualityAlgCoalg}

Here we show how the universal (co)acting bialgebras and Hopf algebras relate to each other if we replace a finite dimensional $\Omega$-algebra $A$ with the dual $\Omega^*$-algebra $A^*$. 

\begin{theorem}\label{dual1} Let $A$ be a finite dimensional $\Omega$-algebra and let $V \subseteq \End_F(A^*)$ be a unital subalgebra. Then we have a bialgebra isomorphism $${}_\square \mathcal{B}(A^*,V)
\cong {}_\square \mathcal{B}(A,V^{\#})^{\mathrm{op}}$$
where $V^{\#} := \lbrace f^* \mid f \in V \rbrace\subseteq \End_F(A)$.
\end{theorem}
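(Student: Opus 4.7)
The plan is to establish a functorial bijection between $P$-actions on the $\Omega^*$-algebra $A^*$ with cosupport in $V$ and $P^{\mathrm{op}}$-actions on the $\Omega$-algebra $A$ with cosupport in $V^{\#}$, and then conclude by comparing terminal objects. Since $A$ is finite dimensional, the dualization map $\End_F(A^*) \to \End_F(A)$, $f\mapsto f^*$, is a bijective anti-isomorphism of unital algebras, so $V^{\#}$ is indeed a unital subalgebra of $\End_F(A)$ carrying the opposite multiplication of $V$, and $(V^{\#})^{\#}=V$.

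Given a bialgebra $P$ and a linear map $\psi\colon P\otimes A^*\to A^*$ with $\cosupp\psi\subseteq V$, the first step is to define $\psi'\colon P\otimes A\to A$ by $\psi'(p\otimes a):=\bar\psi(p)^*(a)$, using the identification $A\cong A^{**}$. Then $\overline{\psi'}(p)=\bar\psi(p)^*$, so
$$\cosupp\psi' \;=\; \bigl\{\bar\psi(p)^* \mid p\in P\bigr\} \;=\; (\cosupp\psi)^{\#}\;\subseteq\;V^{\#},$$
and the inverse direction (starting from $\psi'$ and producing $\psi$) is analogous via $(V^{\#})^{\#}=V$. Because $f\mapsto f^*$ reverses composition, $\psi$ makes $A^*$ a left $P$-module if and only if $\psi'$ makes $A$ a left $P^{\mathrm{op}}$-module; note that the coalgebra structure of $P$ is unchanged when passing to $P^{\mathrm{op}}$, so $\psi'_n$ in the sense of Section~\ref{SectionMeasurings} uses the same comultiplication as $\psi_n$.

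The main step is to show that $\psi$ is a measuring of the $\Omega^*$-algebra $A^*$ if and only if $\psi'$ is a measuring of the $\Omega$-algebra $A$. Fix $\omega\in\Omega$ with $s(\omega)=m$, $t(\omega)=n$, and let $\omega_{A^*}=(\omega_A)^*\colon (A^*)^{\otimes n}\to (A^*)^{\otimes m}$ be the dual operation. The measuring condition for $\psi$ with respect to $\omega^*$ is the equality
$$\psi_m\bigl(p\otimes \omega_A^*(b_1\otimes\cdots\otimes b_n)\bigr)=\omega_A^*\bigl(\psi_n(p\otimes b_1\otimes\cdots\otimes b_n)\bigr)$$
in $(A^*)^{\otimes m}$ for all $p\in P$ and $b_i\in A^*$. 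Pairing each side against an arbitrary $a_1\otimes\cdots\otimes a_m\in A^{\otimes m}$, unfolding the Sweedler-type definition of $\psi_n$ and $\psi'_n$, and using the defining identity $\bar\psi(q)(c)(a)=c(\bar\psi(q)^*(a))=c(\psi'(q\otimes a))$, a direct computation shows that both sides reduce respectively to $(b_1\otimes\cdots\otimes b_n)$ applied to the two sides of
$$\omega_A\bigl(\psi'_m(p\otimes a_1\otimes\cdots\otimes a_m)\bigr)=\psi'_n\bigl(p\otimes \omega_A(a_1\otimes\cdots\otimes a_m)\bigr),$$
which is the measuring condition for $\psi'$ with respect to $\omega$. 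Since the $b_i$ and $a_j$ are arbitrary and $A$ is finite dimensional, the two measuring conditions are equivalent.

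Combining these ingredients, the assignment $(P,\psi)\mapsto (P^{\mathrm{op}},\psi')$ is an isomorphism between the category of bialgebras $P$ acting on $A^*$ as an $\Omega^*$-algebra with cosupport in $V$ and the category of bialgebras acting on $A$ as an $\Omega$-algebra with cosupport in $V^{\#}$; under this isomorphism a morphism of bialgebras $P_1\to P_2$ is sent to the same underlying map viewed as a morphism $P_1^{\mathrm{op}}\to P_2^{\mathrm{op}}$. Applying Theorem~\ref{TheoremsquareBBialgebra} on both sides, terminal objects correspond, yielding the bialgebra isomorphism ${}_\square\mathcal{B}(A^*,V)\cong{}_\square\mathcal{B}(A,V^{\#})^{\mathrm{op}}$. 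The main obstacle is the careful bookkeeping in the measuring translation: one must keep track of the swap between $s(\omega)$ and $t(\omega)$ under dualization, verify that the comultiplication part of the structure passes through unchanged while the multiplication is reversed, and check that cosupports correspond under the $(-)^{\#}$ operation, all of which rely crucially on the finite-dimensionality of $A$.
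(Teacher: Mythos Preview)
Your proof is correct and follows essentially the same strategy as the paper: both set up an isomorphism of categories between bialgebra actions on $A^*$ with cosupport in $V$ and bialgebra actions on $A$ with cosupport in $V^{\#}$ via the assignment $(P,\psi)\mapsto (P^{\mathrm{op}},\widehat\psi)$ with $\widehat\psi(p\otimes a)=\bar\psi(p)^*(a)$, and then match terminal objects. The paper dismisses the verification that $\widehat{\ }$ is an isomorphism of categories as ``a straightforward check'', whereas you spell out the equivalence of the measuring conditions under dualization; this extra detail is correct and is the only substantive difference.
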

\begin{proof}
Consider the category $\mathbf{Actions}(A^*,V)$ where the objects are all actions $\psi \colon B \otimes A^* \to A^*$ for all bialgebras $B$ with $\cosupp \psi \subseteq V$ and the morphisms from $\psi_1 \colon B_1 \otimes A^* \to A^*$
to $\psi_2 \colon B_2 \otimes A^* \to A^*$  are bialgebra homomorphisms $\varphi \colon B_1 \to B_2$
making the diagram below commutative:

$$\xymatrix{ B_1 \otimes A^* \ar[r]^(0.6){\psi_1} \ar[d]_{\varphi \otimes \id_{A^*}} & A^* \\
B_2 \otimes A^*  \ar[ru]_{\psi_2}  } $$

Then the terminal object of $\mathbf{Actions}(A^*,V)$ is the action
of the bialgebra ${}_\square \mathcal{B}(A^*,V)$ on $A^*$.

Analogously, the action of the bialgebra ${}_\square \mathcal{B}(A,V^{\#})$ is the terminal object in the category $\mathbf{Actions}(A,V^{\#})$
where the objects are all actions $\psi \colon B \otimes A \to A$ for all bialgebras $B$ with $\cosupp \psi \subseteq V^{\#}$ and the morphisms from $\psi_1 \colon B_1 \otimes A \to A$
to $\psi_2 \colon B_2 \otimes A \to A$  are bialgebra homomorphisms $\varphi \colon B_1 \to B_2$
making the diagram below commutative:

$$\xymatrix{ B_1 \otimes A \ar[r]^(0.6){\psi_1} \ar[d]_{\varphi \otimes \id_A} & A \\
B_2 \otimes A  \ar[ru]_{\psi_2}  } $$

Let $\widehat{\ }$ be the functor $\mathbf{Actions}(A^*,V) \to \mathbf{Actions}(A,V^{\#})$
defined as follows: if $\psi \colon B \otimes A^* \to A^*$ is an object of $\mathbf{Actions}(A^*,V)$,
then $\widehat\psi \colon B^\mathrm{op} \otimes A \to A$ is defined by
$$a^*\left(\widehat\psi(b\otimes a)\right):=\psi(b\otimes a^*)(a)\text{ for all }b\in B,\ a\in A,\ a^*\in A^*.$$
In other words, $\widehat\psi(b\otimes(-)) := (\psi(b\otimes(-)))^*$ and $\cosupp \widehat\psi = (\cosupp\psi)^{\#}$. The $\Omega$-algebra $A$ becomes
a right $B$-module and therefore a left $B^\mathrm{op}$-module.

A straightforward check shows that $\widehat{\ }$ is an isomorphism of categories and therefore 
$\widehat{\ }$ maps the terminal object to the terminal object. Now the theorem follows.
\end{proof}

For a certain class of finite dimensional $\Omega$-algebras, the bialgebra isomorphism from the previous theorem can be extended to the corresponding $V$-universal (co)acting Hopf algebras. 

\begin{theorem}\label{dual3}
Let $A$ be a finite dimensional $\Omega$-algebra and let $V \subseteq \End_F(A^*)$ be a unital subalgebra such that ${}_\square \mathcal{B}(A,V^{\#})$ is a Hopf algebra. Then we have a Hopf algebra isomorphism $${}_\square \mathcal{H}(A^*,V)
\cong {}_\square \mathcal{H}(A,V^{\#})^{\mathrm{op}}.$$
\end{theorem}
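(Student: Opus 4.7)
The plan is to bootstrap from the bialgebra isomorphism supplied by Theorem~\ref{dual1}, namely
\[
{}_\square \mathcal{B}(A^*, V) \cong {}_\square \mathcal{B}(A, V^{\#})^{\mathrm{op}},
\]
by applying the right adjoint $H_r\colon \mathbf{Bialg}_F\to \mathbf{Hopf}_F$ to both sides. The key observation I want to leverage at the outset is that, since ${}_\square \mathcal{B}(A, V^{\#})$ is already a Hopf algebra by hypothesis and the inclusion $\mathbf{Hopf}_F \hookrightarrow \mathbf{Bialg}_F$ is fully faithful, the unit of the adjunction is an isomorphism on Hopf algebras; in particular ${}_\square \mathcal{H}(A, V^{\#}) = H_r({}_\square \mathcal{B}(A, V^{\#})) \cong {}_\square \mathcal{B}(A, V^{\#})$ as Hopf algebras.

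Applying $H_r$ to the left-hand side of the displayed isomorphism produces ${}_\square \mathcal{H}(A^*, V)$ by definition. For the right-hand side, I will argue that ${}_\square \mathcal{B}(A, V^{\#})^{\mathrm{op}}$ is itself a Hopf algebra, inheriting a Hopf structure either by transport along the isomorphism with the Hopf algebra $H_r({}_\square \mathcal{B}(A^*, V)) = {}_\square \mathcal{H}(A^*, V)$, or equivalently by observing that the antipode of ${}_\square \mathcal{B}(A, V^{\#})$ must be bijective and thus supplies an antipode on the opposite. Consequently $H_r$ leaves the right-hand side unchanged, and chaining these identifications yields ${}_\square \mathcal{H}(A^*, V) \cong {}_\square \mathcal{B}(A, V^{\#})^{\mathrm{op}} \cong {}_\square \mathcal{H}(A, V^{\#})^{\mathrm{op}}$.

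The main obstacle is precisely to verify that ${}_\square \mathcal{H}(A, V^{\#})^{\mathrm{op}}$ genuinely admits a Hopf algebra structure: for a general Hopf algebra $H$ with antipode $S$, the opposite bialgebra $H^{\mathrm{op}}$ is a Hopf algebra only when $S$ is bijective (in which case its antipode is $S^{-1}$). A conceptually cleaner alternative that sidesteps this point is a Yoneda argument in the spirit of the proof of Theorem~\ref{TheoremUnivHopf(Co)actDuality}: for every Hopf algebra $K$, I would chain the natural bijections
\[
\mathbf{Hopf}_F(K, {}_\square \mathcal{H}(A^*, V)) \cong \mathbf{Bialg}_F(K, {}_\square \mathcal{B}(A^*, V)) \cong \mathbf{Bialg}_F(K, {}_\square \mathcal{H}(A, V^{\#})^{\mathrm{op}}),
\]
and then identify the final set with $\mathbf{Hopf}_F(K, {}_\square \mathcal{H}(A, V^{\#})^{\mathrm{op}})$ via a Hopf-level refinement of the categorical equivalence $\widehat{\ }$ constructed in the proof of Theorem~\ref{dual1}, concluding by Yoneda.
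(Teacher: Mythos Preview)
Your overall plan---apply $H_r$ to both sides of the bialgebra isomorphism of Theorem~\ref{dual1} and then argue that $H_r$ acts as the identity on the right-hand side---is exactly the paper's strategy, and you have correctly isolated the decisive obstacle: one must know that ${}_\square \mathcal{B}(A,V^{\#})^{\mathrm{op}}$ (equivalently ${}_\square \mathcal{H}(A,V^{\#})^{\mathrm{op}}$) is a Hopf algebra, which amounts to bijectivity of the antipode of ${}_\square \mathcal{B}(A,V^{\#})$. However, neither of your proposed resolutions closes this gap. Transport of structure along the isomorphism $H_r({}_\square \mathcal{B}(A^*,V)) \cong H_r({}_\square \mathcal{B}(A,V^{\#})^{\mathrm{op}})$ only endows $H_r({}_\square \mathcal{B}(A,V^{\#})^{\mathrm{op}})$ with a Hopf structure, and identifying this with ${}_\square \mathcal{B}(A,V^{\#})^{\mathrm{op}}$ is precisely what requires the latter to already be a Hopf algebra---so this is circular. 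The bare assertion that the antipode ``must be bijective'' is not justified by the hypothesis. Finally, your Yoneda alternative stalls at the same point: to rewrite $\mathbf{Bialg}_F(K, {}_\square \mathcal{H}(A,V^{\#})^{\mathrm{op}})$ as $\mathbf{Hopf}_F(K, {}_\square \mathcal{H}(A,V^{\#})^{\mathrm{op}})$ you again need the target to be a Hopf algebra, and the ``Hopf-level refinement'' of the equivalence $\widehat{\ }$ from Theorem~\ref{dual1} does not exist in general, since $B \mapsto B^{\mathrm{op}}$ does not send Hopf algebras to Hopf algebras.

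The paper supplies the missing idea as follows. Although ${}_\square \mathcal{B}(A,V^{\#})^{\mathrm{op}}$ need not be a Hopf algebra, the antipode $S$ of ${}_\square \mathcal{B}(A,V^{\#})$ is automatically a \emph{skew} antipode for ${}_\square \mathcal{B}(A,V^{\#})^{\mathrm{op}}$. One then invokes \cite[Lemma~3.6]{CH}: applying $H_r$ to a bialgebra with skew antipode yields a Hopf algebra with \emph{bijective} antipode. Hence $H_r({}_\square \mathcal{B}(A,V^{\#})^{\mathrm{op}})$ lies in $\mathbf{SHopf}_F$, and the Yoneda argument can be run with test objects $H \in \mathbf{SHopf}_F$, where $(-)^{\mathrm{op}}$ \emph{is} an autoequivalence and $H^{\mathrm{op}}$ is again a Hopf algebra. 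This produces the isomorphism $H_r({}_\square \mathcal{B}(A,V^{\#})^{\mathrm{op}}) \cong H_r({}_\square \mathcal{B}(A,V^{\#}))^{\mathrm{op}}$, after which your chain of identifications goes through. The skew-antipode observation together with the cited lemma is the one nontrivial ingredient you are missing.
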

\begin{proof}
To start with, according to our assumption ${}_\square \mathcal{B}(A,V^{\#})^{\mathrm{op}}$ has a skew antipode. Proceeding as in \cite[Lemma 3.6]{CH}, one can easily check that $H_{r}\bigl({}_\square \mathcal{B}(A,V^{\#})^{\mathrm{op}}\bigl)$ is a Hopf algebra with bijective antipode. Since the embedding functor $\mathbf{Hopf}_F \to \mathbf{Bialg}_F$ is left adjoint to $H_{r}$, for any Hopf algebra with bijective antipode $H$ we have the following natural bijections:
\begin{eqnarray*}
&&\Hom_{\,\mathbf{SHopf}_{F}}\bigl(H,\, H_{r}({}_\square \mathcal{B}(A,V^{\#})^{\mathrm{op}})\bigl) = \Hom_{\,\mathbf{Hopf}_{F}}\bigl(H,\, H_{r}({}_\square \mathcal{B}(A,V^{\#})^{\mathrm{op}})\bigl)\\
&&\cong \Hom_{\,\mathbf{Bialg}_{F}} \bigl(H,\, {}_\square \mathcal{B}(A,V^{\#})^{\mathrm{op}}\bigl) = \Hom_{\,\mathbf{Bialg}_{F}} \bigl(H^{\mathrm{op}},\, {}_\square \mathcal{B}(A,V^{\#})\bigl)\\
&& \cong \Hom_{\,\mathbf{Hopf}_{F}} \bigl(H^{\mathrm{op}},\, H_{r}({}_\square \mathcal{B}(A,V^{\#}))\bigl) = \Hom_{\,\mathbf{Hopf}_{F}} \bigl(H,\, H_{r}({}_\square \mathcal{B}(A,V^{\#}))^{\mathrm{op}}\bigl)\\
&& = \Hom_{\,\mathbf{SHopf}_{F}} \bigl(H,\, H_{r}({}_\square \mathcal{B}(A,V^{\#}))^{\mathrm{op}}\bigl).
\end{eqnarray*}
Thus, the Hopf algebras $H_{r}({}_\square \mathcal{B}(A,V^{\#})^{\mathrm{op}})$ and $H_{r}({}_\square \mathcal{B}(A,V^{\#}))^{\mathrm{op}}$ are isomorphic. This leads to the following isomorphism of Hopf algebras:
\begin{eqnarray*}
{}_\square \mathcal{H}(A,V^{\#})^{\mathrm{op}} = H_{r}\bigl({}_\square \mathcal{B}(A,V^{\#})\bigl)^{\mathrm{op}} \cong H_{r}\bigl({}_\square \mathcal{B}(A,V^{\#})^{\mathrm{op}} \bigl)  \stackrel{{\rm Theorem}\, \ref{dual1}}{\cong}  H_{r}\bigl({}_\square \mathcal{B}(A^*,V)\bigl) = {}_\square \mathcal{H}(A^*,V)
\end{eqnarray*}
and the proof is now finished.
\end{proof}

Analogously, we have

\begin{theorem}\label{dual2} Let $A$ be a finite dimensional $\Omega$-algebra and let $V \subseteq \End_F(A)$ be a unital subalgebra. Then we have a bialgebra isomorphism $$\mathcal{B}^\square(A^*,V)
\cong \mathcal{B}^\square(A,V^{\#})^{\mathrm{cop}}.$$	
\end{theorem}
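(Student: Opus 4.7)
The strategy is to mirror the proof of Theorem~\ref{dual1} on the coaction side. Consider the category $\mathbf{Coactions}(A^*,V)$ whose objects are pairs $(B,\rho)$ consisting of a bialgebra $B$ and a coaction $\rho\colon A^*\to A^*\otimes B$ with $\cosupp\rho\subseteq V$, where the morphisms $(B_1,\rho_1)\to(B_2,\rho_2)$ are bialgebra homomorphisms $\varphi\colon B_1\to B_2$ satisfying $(\id_{A^*}\ot\varphi)\rho_1=\rho_2$. By the universal property, $\bigl(\mathcal{B}^\square(A^*,V),\rho_{A^*,V}\bigr)$ is the initial object of $\mathbf{Coactions}(A^*,V)$. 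Analogously, $\bigl(\mathcal{B}^\square(A,V^{\#}),\rho_{A,V^{\#}}\bigr)$ is the initial object of the category $\mathbf{Coactions}(A,V^{\#})$ (of coactions on $A$ as an $\Omega$-algebra, with cosupport in $V^{\#}$). I will construct an isomorphism of categories between these and identify the images of initial objects.

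The key construction is a functor $\widehat{\ }\colon\mathbf{Coactions}(A^*,V)\to\mathbf{Coactions}(A,V^{\#})$ exchanging $B$ with $B^{\mathrm{cop}}$. Given $\rho\colon A^*\to A^*\otimes B$, use finite dimensionality of $A$ to define $\hat\rho\colon A\to A\otimes B^{\mathrm{cop}}$ by the property
\[
(\mathrm{ev}_{a^*}\ot\id_B)\hat\rho(a)=(\mathrm{ev}_a\ot\id_B)\rho(a^*)\qquad\text{for all }a\in A,\ a^*\in A^*,
\]
i.e.\ if $\rho(e_i^*)=\sum_j e_j^*\ot c_{ij}$ in dual bases, then $\hat\rho(e_i)=\sum_j e_j\ot c_{ji}$. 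A direct calculation shows that coassociativity of $\rho$ (yielding $\Delta_B(c_{ik})=\sum_j c_{jk}\ot c_{ij}$) is equivalent to coassociativity of $\hat\rho$ with respect to $\Delta^{\mathrm{cop}}_{B}$, while counitality transfers directly; hence $\hat\rho$ is a $B^{\mathrm{cop}}$-comodule. A similar transpose-by-dual-bases computation yields that $\cosupp\hat\rho=(\cosupp\rho)^{\#}\subseteq V^{\#}$. The comeasuring axioms for $\hat\rho$ with respect to each $\omega\in\Omega$ are obtained from the comeasuring axioms for $\rho$ with respect to $\omega^*\in\Omega^*$ by applying the equivalence $\Omega\text{-}\mathbf{Alg}_F^{\mathrm{fd}}\to\Omega^*\text{-}\mathbf{Alg}_F^{\mathrm{fd}}$ and unwinding definitions; the opposite in $B^{\mathrm{cop}}$ absorbs the order-reversal of the products $(a_1)_{(1)}\cdots(a_n)_{(1)}$ caused by the duality. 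On morphisms, $\widehat{\ }$ sends $\varphi\colon B_1\to B_2$ to the same underlying linear map, now viewed as a morphism $B_1^{\mathrm{cop}}\to B_2^{\mathrm{cop}}$.

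The construction is manifestly reversible via the same recipe starting from $\sigma\colon A\to A\ot C$ and producing $\tilde\sigma\colon A^*\to A^*\ot C^{\mathrm{cop}}$; thus $\widehat{\ }$ is an isomorphism of categories and maps initial objects to initial objects. The image of $\rho_{A^*,V}$ is a coaction $\widehat{\rho_{A^*,V}}\colon A\to A\ot \mathcal{B}^\square(A^*,V)^{\mathrm{cop}}$ which must therefore be isomorphic, as an object of $\mathbf{Coactions}(A,V^{\#})$, to the initial object $\rho_{A,V^{\#}}$. Consequently $\mathcal{B}^\square(A,V^{\#})\cong \mathcal{B}^\square(A^*,V)^{\mathrm{cop}}$ as bialgebras, giving the announced isomorphism after taking $\mathrm{cop}$ on both sides. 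The main technical obstacle is the verification of the comeasuring condition, since one must carefully keep track of how the dualization of an operation $\omega$ of arity $(s,t)$ to $\omega^*$ of arity $(t,s)$ interacts with the iterated coactions $\rho_n$ and $\hat\rho_n$; this is where the appearance of $B^{\mathrm{cop}}$ (rather than $B$) becomes essential, and it is the analogue of the appearance of $B^{\mathrm{op}}$ in the proof of Theorem~\ref{dual1}.
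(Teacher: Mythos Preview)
Your proposal is correct and follows essentially the same route as the paper: both set up the categories $\mathbf{Coactions}(A^*,V)$ and $\mathbf{Coactions}(A,V^{\#})$, define the same transpose functor $\rho\mapsto\hat\rho$ landing in $B^{\mathrm{cop}}$ via the formula $(a^*\otimes\id_B)\hat\rho(a)=\rho(a^*)(a)$, observe it is an isomorphism of categories, and transport initial objects. One small inaccuracy worth flagging: the passage to $B^{\mathrm{cop}}$ is forced entirely by the comodule (coassociativity) axiom, which you already handle correctly; the comeasuring condition itself involves only the \emph{multiplication} of $B$, which is unchanged in $B^{\mathrm{cop}}$, so there is no ``order-reversal of the products $(a_1)_{(1)}\cdots(a_n)_{(1)}$'' to be absorbed there---a dual-basis computation shows the comeasuring identities for $\rho$ and $\hat\rho$ are literally the same relations among the $c_{ij}$.
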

\begin{proof}
Consider the category $\mathbf{Coactions}(A^*,V)$ where the objects are all coactions $\rho \colon  A^* \to A^* \otimes B$ for all bialgebras $B$ with $\cosupp \rho \subseteq V$ and the morphisms from $\rho_1 \colon A^* \to A^* \otimes B_1$
to $\rho_2 \colon A^* \to A^* \otimes B_2$  are bialgebra homomorphisms $\varphi \colon B_1 \to B_2$
making the diagram below commutative:

$$\xymatrix{ A^* \ar[r]^(0.4){\rho_1} 
\ar[rd]_{\rho_2}
 & A^* \otimes B_1 \ar[d]^{\id_{A^*} \otimes \varphi} \\
& A^* \otimes B_2} $$

Then the initial object of $\mathbf{Coactions}(A^*,V)$ is the coaction
of the bialgebra $\mathcal{B}^\square(A^*,V)$ on $A$.

Analogously, the coaction of the bialgebra $\mathcal{B}^\square(A,V^{\#})$ is the initial object in the category $\mathbf{Coactions}(A,V^{\#})$
where the objects are all coactions $\rho \colon  A \to A \otimes B$ for all bialgebras $B$ with $\cosupp \rho \subseteq V^{\#}$ and the morphisms from $\rho_1 \colon A \to A \otimes B_1$
to $\rho_2 \colon A \to A \otimes B_2$  are bialgebra homomorphisms $\varphi \colon B_1 \to B_2$
making the diagram below commutative:

$$\xymatrix{ A \ar[r]^(0.4){\rho_1} 
\ar[rd]_{\rho_2}
 & A \otimes B_1 \ar[d]^{\id_{A} \otimes \varphi} \\
& A \otimes B_2} $$

Let $\widehat{\ }$ be the functor $\mathbf{Coactions}(A^*,V) \to \mathbf{Coactions}(A,V^{\#})$
defined as follows: if $\rho \colon  A^* \to A^* \otimes B$ is an object of $\mathbf{Coactions}(A^*,V)$,
then $\widehat\rho \colon  A \to A \otimes B^\mathrm{cop}$ is defined by
$$(a^*\otimes \id_B)\widehat\rho(a):=\rho(a^*)(a)\text{ for all }a\in A,\ a^*\in A^*.$$

Note that $\widehat\rho^{\vee}(b^*\otimes(-)) = \left(\rho^{\vee}(b^*\otimes(-))\right)^*$
for all $b^*\in B^*$ where $\rho^{\vee}(b^*\otimes a^*):=b^*(a^*_{(1)})a^*_{(0)}$.
Hence $\cosupp \widehat\rho = (\cosupp\rho)^{\#}$. The $\Omega$-algebra $A$ becomes a left $B$-comodule and therefore a right $B^\mathrm{cop}$-module.

A straightforward check shows that $\widehat{\ }$ is an isomorphism of categories and therefore 
$\widehat{\ }$ maps the initial object to the initial object. Now the theorem follows.
\end{proof}

\begin{theorem}\label{dual4}
For any finite dimensional $\Omega$-algebra $A$ such that $\mathcal{B}^\square(A,V^{\#})$ is a Hopf algebra, we have an isomorphism of Hopf algebras $$\mathcal{H}^\square(A^*,V)
\cong \mathcal{H}^\square(A,V^{\#})^{\mathrm{cop}}.$$
\end{theorem}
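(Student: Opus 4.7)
The plan is to adapt the argument of Theorem~\ref{dual3}, systematically replacing ``op'' by ``cop'' and the right adjoint $H_r$ by the left adjoint $H_l$ to the embedding $\mathbf{Hopf}_F \hookrightarrow \mathbf{Bialg}_F$. First, Theorem~\ref{dual2} provides a bialgebra isomorphism $\mathcal{B}^\square(A^*,V) \cong \mathcal{B}^\square(A,V^{\#})^{\mathrm{cop}}$, to which I apply $H_l$ to obtain
$$\mathcal{H}^\square(A^*,V) = H_l\bigl(\mathcal{B}^\square(A^*,V)\bigr) \cong H_l\bigl(\mathcal{B}^\square(A,V^{\#})^{\mathrm{cop}}\bigr).$$
Since $\mathcal{B}^\square(A,V^{\#})$ is by hypothesis a Hopf algebra with antipode $S$, a direct check shows that $S$ serves as a \emph{coskew antipode} for the bialgebra $\mathcal{B}^\square(A,V^{\#})^{\mathrm{cop}}$. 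Dualizing the argument of \cite[Lemma~3.6]{CH}, I conclude that $H_l\bigl(\mathcal{B}^\square(A,V^{\#})^{\mathrm{cop}}\bigr)$ is a Hopf algebra with bijective antipode, so that $\mathcal{H}^\square(A^*,V)$ lies in $\mathbf{SHopf}_F$.

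The remaining task is to identify $H_l\bigl(\mathcal{B}^\square(A,V^{\#})^{\mathrm{cop}}\bigr)$ with $\mathcal{H}^\square(A,V^{\#})^{\mathrm{cop}} = \mathcal{B}^\square(A,V^{\#})^{\mathrm{cop}}$. I would do this by combining the adjunction between $H_l$ and the embedding $\mathbf{Hopf}_F \hookrightarrow \mathbf{Bialg}_F$ with the involution $(-)^{\mathrm{cop}}$ on $\mathbf{SHopf}_F$: for any $H \in \mathbf{SHopf}_F$ we get a chain of natural bijections
\begin{align*}
\mathbf{SHopf}_F\bigl(H_l\bigl(\mathcal{B}^\square(A,V^\#)^{\mathrm{cop}}\bigr),\, H\bigr)
&\cong \mathbf{Bialg}_F\bigl(\mathcal{B}^\square(A,V^\#)^{\mathrm{cop}},\, H\bigr) \\
&= \mathbf{Bialg}_F\bigl(\mathcal{B}^\square(A,V^\#),\, H^{\mathrm{cop}}\bigr) \\
&= \mathbf{Hopf}_F\bigl(\mathcal{B}^\square(A,V^\#),\, H^{\mathrm{cop}}\bigr) \\
&\cong \mathbf{SHopf}_F\bigl(\mathcal{B}^\square(A,V^\#)^{\mathrm{cop}},\, H\bigr),
\end{align*}
where the third equality uses that any bialgebra map between Hopf algebras is automatically a Hopf algebra map. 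Invoking Yoneda in $\mathbf{SHopf}_F$ yields a Hopf algebra isomorphism $H_l\bigl(\mathcal{B}^\square(A,V^\#)^{\mathrm{cop}}\bigr) \cong \mathcal{B}^\square(A,V^\#)^{\mathrm{cop}} = \mathcal{H}^\square(A,V^\#)^{\mathrm{cop}}$, which combined with the earlier step finishes the argument.

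The hard part will be establishing the dual of \cite[Lemma~3.6]{CH}: showing that $H_l(B^{\mathrm{cop}})$ is always a Hopf algebra with bijective antipode whenever $B$ is a Hopf algebra. This must be verified by a careful adaptation of the argument used in the ``op'' case. A useful by-product of that lemma is that the antipode of $\mathcal{B}^\square(A,V^{\#})$ itself turns out to be bijective under the hypotheses, so that $\mathcal{B}^\square(A,V^{\#})^{\mathrm{cop}}$ genuinely lies in $\mathbf{SHopf}_F$ and the last bijection of the chain above is justified.
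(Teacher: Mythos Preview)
Your argument is essentially the paper's proof: apply $H_l$ to the bialgebra isomorphism of Theorem~\ref{dual2}, invoke \cite[Lemma~3.6]{CH} to see that $H_l\bigl(\mathcal{B}^\square(A,V^{\#})^{\mathrm{cop}}\bigr)$ has bijective antipode, and then run a Yoneda argument in $\mathbf{SHopf}_F$ to identify $H_l\bigl(\mathcal{B}^\square(A,V^{\#})^{\mathrm{cop}}\bigr)$ with $H_l\bigl(\mathcal{B}^\square(A,V^{\#})\bigr)^{\mathrm{cop}}$. The only cosmetic difference is that you shortcut the second application of the $H_l$-adjunction by using directly that $\mathcal{B}^\square(A,V^{\#})$ is already a Hopf algebra, so $H_l\bigl(\mathcal{B}^\square(A,V^{\#})\bigr)=\mathcal{B}^\square(A,V^{\#})$ and hence $\mathcal{H}^\square(A,V^{\#})=\mathcal{B}^\square(A,V^{\#})$; the paper keeps $H_l(\mathcal{B}^\square(A,V^{\#}))$ in the notation throughout, but the content is identical. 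Your remark about needing the antipode of $\mathcal{B}^\square(A,V^{\#})$ to be bijective (so that $\mathcal{B}^\square(A,V^{\#})^{\mathrm{cop}}$ lies in $\mathbf{SHopf}_F$ and the last bijection is legitimate) is exactly the point that is implicit in the paper's final equality $\Hom_{\mathbf{Hopf}_F}\bigl(H_l(\mathcal{B}^\square(A,V^{\#}))^{\mathrm{cop}},H\bigr)=\Hom_{\mathbf{SHopf}_F}\bigl(H_l(\mathcal{B}^\square(A,V^{\#}))^{\mathrm{cop}},H\bigr)$.
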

\begin{proof}
First note that our assumption implies that the bialgebra $\mathcal{B}^\square(A,V^{\#})^{\mathrm{cop}}$ has a skew antipode. In light of \cite[Lemma 3.6]{CH} the Hopf algebra $H_{l}\bigl(\mathcal{B}^\square(A,V^{\#})^{\mathrm{cop}}\bigl)$ has a bijective antipode. 
Having in mind that the embedding functor $\mathbf{Hopf}_F \to \mathbf{Bialg}_F$ is right adjoint to $H_{l}$ it follows that for any Hopf algebra with bijective antipode $H$ we have the following natural bijections:
\begin{eqnarray*}
&&\Hom_{\,\mathbf{SHopf}_{F}}\bigl(H_{l}(\mathcal{B}^\square(A,V^{\#})^{\mathrm{cop}}),\, H\bigl) = \Hom_{\,\mathbf{Hopf}_{F}}\bigl(H_{l}(\mathcal{B}^\square(A,V^{\#})^{\mathrm{cop}}),\, H\bigl)\\ 
&& \cong \Hom_{\,\mathbf{Bialg}_{F}} \bigl(\mathcal{B}^\square(A,V^{\#})^{\mathrm{cop}},\, H\bigl) =  \Hom_{\,\mathbf{Bialg}_{F}} \bigl(\mathcal{B}^\square(A,V^{\#}),\, H^{\mathrm{cop}}\bigl)\\
&& \cong \Hom_{\,\mathbf{Hopf}_{F}}\bigl(H_{l}(\mathcal{B}^\square(A,V^{\#})),\, H^{\mathrm{cop}}\bigl) = \Hom_{\,\mathbf{Hopf}_{F}}\bigl(H_{l}(\mathcal{B}^\square(A,V^{\#}))^{\mathrm{cop}},\, H\bigl)\\ 
&& = \Hom_{\,\mathbf{SHopf}_{F}}\bigl(H_{l}(\mathcal{B}^\square(A,V^{\#}))^{\mathrm{cop}},\, H\bigl).
\end{eqnarray*}
Therefore, $H_{l}(\mathcal{B}^\square(A,V^{\#})^{\mathrm{cop}})$ and $H_{l}(\mathcal{B}^\square(A,V^{\#}))^{\mathrm{cop}}$ are isomorphic Hopf algebras. Furthermore, we have the following isomorphism of Hopf algebras:
\begin{eqnarray*}
\mathcal{H}^\square(A,V^{\#})^{\mathrm{cop}} = H_{l}\bigl(\mathcal{B}^\square(A,V^{\#})\bigl)^{\mathrm{cop}} \cong H_{l}\bigl(\mathcal{B}^\square(A,V^{\#})^{\mathrm{cop}}\bigl) \stackrel{{\rm Theorem}\, \ref{dual2}}{\cong} H_{l}(\mathcal{B}^\square(A^*,V)) = \mathcal{H}^\square(A^*,V)
\end{eqnarray*}
and the theorem now follows.

\end{proof}

\subsection{Examples of (co)algebras for which the universal coacting Hopf algebra does not exist}\label{non-exist}

As it was originally shown by Tambara and follows from
Corollary~\ref{CorollaryManinExistence} and Examples~\ref{ExampleAlgebra}--\ref{ExampleUnitalAlgebra}, the universal coacting Hopf algebra of a finite dimensional algebra always exists. Moreover, as noted in Examples~\ref{exUniv1}, 3) and~\ref{exUniv2}, 3), the universal coacting Hopf algebra also exists for any finite dimensional coalgebra.
However, this is not necessarily the case for arbitrary (co)algebras as the next examples show.

\begin{example}
Let $A$ be the algebra over an algebraically closed field $F$ of characteristic $0$, with the countable basis $1_A, v_1, v_2, v_3, \ldots$
such that $v_i v_j = 0$ for all $i,j \in\mathbb N$.
Then there exists neither $\mathcal{B}^{\square}(A)$ nor $\mathcal{H}^{\square}(A)$.
\end{example}
\begin{proof}
Let $C_n=\langle c_n \rangle_n$ be the cyclic group of order $n$, $n\in\mathbb N$. Consider the following $C_n$-action on $A$ by automorphisms: $$c_n v_i = \left\lbrace \begin{array}{lll}
 v_{i+1} & \text{ if } &  i<n, \\
 v_1 & \text{ if } &  i=n,\\
 v_i & \text{ if } &  i>n.
  \end{array} \right.$$
  Let $\zeta_n$ be the primitive $n$th root of unity.
It is easy to see that $$1_A,$$
   $$w_{nj}=\sum_{i=1}^n \zeta_n^{(i-1)(j-1)} v_i \text{ for }j=1,\ldots, n,$$
  $$v_{n+1}, v_{n+2},\ldots$$ form a basis of eigenvectors for this $C_n$-action.
  Moreover $c_n w_{nj} = \zeta_n^{1-j} w_{nj} = \chi_n^{1-j}(c_n) w_{nj}$.
  
  Let $C^*_n = \Hom(C_n, F^\times) = \langle \chi_n \rangle_n$ be the dual group of $C_n$,
  $\chi_n(c_n):= \zeta_n$. Consider the $C^*_n$-grading
  $A=\bigoplus_{j=0}^n A^{\left(\chi_n^j \right)}$
   where $w_{nj} \in A^{\left(\chi_n^{1-j} \right)}$
   and $1_A, v_{n+1}, v_{n+2},\ldots \in A^{\left(1\right)}$.
   
   The $C_n$-action and the $C^*_n$-grading defined above correspond to the  $FC^*_n$-comodule structure $\rho_n \colon A \to A \otimes FC^*_n$ defined by $\rho_n(1_A)=1_A\otimes 1$, $\rho_n(w_{nj}) := w_{nj} \otimes \chi_n^{1-j}$ for $j=1,\ldots, n$ and $\rho_n(v_i)=v_i \otimes 1$
   for $i > n$. Note that 
   $v_1 = \frac{1}{n}\sum_{j=1}^n w_{nj}$  
      and $$\rho_n(v_1)= \frac{1}{n} \sum_{j=1}^n w_{nj} \otimes \chi_n^{1-j}
      = \frac{1}{n} \sum_{j=1}^n \sum_{i=1}^n \zeta_n^{(i-1)(j-1)} v_i \otimes \chi_n^{1-j}
      = \sum_{i=1}^n v_i \otimes \chi_{ni}$$
      where $\chi_{ni} := \frac{1}{n} \sum_{j=1}^n \zeta_n^{(i-1)(j-1)} \chi_n^{1-j}$, $1\leqslant i \leqslant n$.
      Now suppose there exist $\mathcal{B}^{\square}(A)$ and denote by $\rho \colon A \to A \otimes
      \mathcal{B}^{\square}(A)$ the corresponding coaction.
      We have $\rho(v_1) = 1_A \otimes h_0 + \sum_{i=1}^m v_i \otimes h_i$
      for some $m\in\mathbb N$ and some $h_i \in \mathcal{B}^{\square}(A)$.
      Then, as for fixed $n$ the elements $\chi_{ni}$, where $i=1,\ldots,n$, are linearly independent by the Vandermonde argument, for $n > m$ there exist no bialgebra homomorphism $\varphi \colon \mathcal{B}^{\square}(A) \to
      FC^*_n$ making the diagram below commutative:
      $$\xymatrix{ A \ar[r]^(0.3){\rho} \ar[rd]_{\rho_n} &  A \otimes \mathcal{B}^{\square}(A) \ar@{-->}[d]^{\id_A \otimes \varphi} \\ 
              &  A \otimes FC^*_n}$$
              The same argument works for $\mathcal{H}^{\square}(A)$.      \end{proof}
      
     \begin{remark}
     The same proof works for $A=F[v_1, v_2, v_3, \ldots]$ which has no universal coacting Hopf and bialgebras either.
     \end{remark}
      
\begin{example}
Let $(C,\Delta,\varepsilon)$ be the coalgebra over an algebraically closed field $F$ of characteristic $0$, with the countable basis $v_0, v_1, v_2, v_3, \ldots$
such that $\varepsilon(v_0)=1$, $\Delta(v_0)=v_0 \otimes v_0$,
$\Delta(v_i)=v_0 \otimes v_i + v_i\otimes v_0$, $\varepsilon(v_i) = 0$  for all $i \in\mathbb N$.
Then there exists neither $\mathcal{B}^{\sqbullet}(C)$ nor $\mathcal{H}^{\sqbullet}(C)$.
\end{example}
\begin{proof} We consider the $C_n$-action 
$$c_n v_i = \left\lbrace \begin{array}{lll}
 v_{i+1} & \text{ if } &  i<n, \\
 v_1 & \text{ if } &  i=n,\\
 v_i & \text{ if } &  i>n \text{ or } i=0
  \end{array} \right.$$
and repeat verbatim the proof of the previous example.
\end{proof}


\begin{thebibliography}{99}

\bibitem{AgoreCatConstr} Agore, A.\,L. Categorical constructions for Hopf algebras. \textit{Comm. Algebra}, \textbf{39} (2011), 1476--1481. 

\bibitem{AGV1} Agore, A.\,L., Gordienko, A.\,S., Vercruysse, J. On equivalences of (co)module algebra structures over Hopf algebras. \textit{J. Noncommut. Geom.} (to appear).
%\texttt{arXiv:1812.04563 [math.RA] 11 Dec 2018}. 

\bibitem{CMZ}  Caenepeel, S., Militaru, G.; Zhu, S. Frobenius and separable functors for generalized module categories and nonlinear equations. Lecture Notes in Mathematics, 1787. Springer-Verlag, Berlin, 2002.

\bibitem{CH}
Chirv\u asitu A. On epimorphisms and monomorphisms of Hopf algebras. \textit{J. Algebra}, \textbf{323} (2010), 1593--1606.

\bibitem{ElduqueKochetov} Elduque, A., Kochetov M.\,V. Gradings on simple Lie algebras. \textit{AMS Mathematical Surveys and Monographs}, \textbf{189}, Providence, R.I., Halifax, NS, 2013, 336 pp.

\bibitem{HLV}
M.\ Hyland, I.\ L\'opez Franco, C.\ Vasilakopoulou, Hopf measuring comonoids and enrichment, {\em Proc. Lond. Math. Soc.} {\bf 115} (2017), 1118--1148. 

\bibitem{Manin} Manin, Yu.\,I. Quantum groups and noncommutative geometry. Universite de Montreal, Centre de Recherches Mathematiques, Montreal, QC, 1988. 

\bibitem{Mastnak}  Mastnak, M., On the Cohomology of a Smash Product of Hopf Algebras, \texttt{arXiv:math/0210123v1}. 

\bibitem{Pareigis}
Pareigis, B. Lectures on quantum groups and noncommutative
geometry, TU Munich Lecture Notes. Available at
\url{http://www.mathematik.uni-muenchen.de/~pareigis/Vorlesungen/02SS/QGandNCG.pdf}

\bibitem{RVdB2}
Raedschelders, T., Van den Bergh, M. The Manin Hopf algebra of a Koszul Artin-Schelter regular algebra is quasi-hereditary,  \textit{Adv. Math.}  \textbf{305} (2017), 601--660. 

\bibitem{Radford} Radford, D.\,E. Hopf algebras. \textit{World Scientific}, \textbf{49}, 2012.

\bibitem{Sweedler} Sweedler, M.\,E. Hopf Algebras. W. A. Benjamin New York, 1969.

\bibitem{Takeuchi} Takeuchi, M. Free Hopf algebras generated by coalgebras.
\textit{J. Math. Soc. Japan}, \textbf{23} (1971), 561--582.

\bibitem{Tambara}
Tambara, D. The coendomorphism bialgebra of an algebra. \textit{J. Fac. Sci. Univ. Tokyo Math.}, \textbf{37} (1990), 425--456.

\end{thebibliography}
\end{document}